\documentclass[12pt]{article}
\usepackage{a4wide}
\usepackage{amsmath,amssymb,amsthm}
\usepackage[mathscr]{eucal}
\usepackage{bbold}
\usepackage[usenames,dvipsnames,svgnames,table]{xcolor}
\usepackage{tikz}
\usetikzlibrary{positioning,automata}
\usetikzlibrary{arrows,calc}
\usetikzlibrary{arrows.meta}
\usetikzlibrary{decorations.markings,plotmarks}
\usetikzlibrary{matrix}
\usetikzlibrary{external}
\usetikzlibrary{shapes.geometric}
\usepackage{stackengine}
\usepackage[tiling]{pst-fill}
\usepackage[dvipsnames]{xcolor}
\usepackage{extpfeil}
\usepackage{hyperref}

\definecolor{GoldenYellow}{rgb}{1.0, 0.843, 0.0}
\definecolor{Tangerine}{rgb}{0.949, 0.522, 0.0}

\newtheorem{lemma}{Lemma}
\newtheorem{remark}{Remark}
\newtheorem{theorem}{Theorem}
\newtheorem{proposition}[theorem]{Proposition}
\newtheorem{corollary}{Corollary}[theorem]

\theoremstyle{definition}
\newtheorem{definition}{Definition}
\newtheorem{example}{Example}
\newtheorem{problem}{Problem}

\def\N{{\mathbb{N}}}

\def\bF{{\mathbb F}}

\author{Ievgen Bondarenko, Rostislav Grigorchuk, Alina Vdovina}
\title{\textbf{Ramanujan subshifts}}

\newcommand{\ComplexFourRF}{%
\begin{tikzpicture}[scale=1.2,baseline=-6mm,line width=0.5pt,decoration={
    markings,
    mark=at position 0.5 with {\arrow[xshift=3.333pt]{triangle 60}}},
    ]
\filldraw[black] (0,0) circle (1pt) (0,-1) circle (1pt) (1,0) circle (1pt) (1,-1) circle (1pt);

\draw[postaction={decorate}] (0,-1) -- node[left,xshift=-1.5pt,yshift=0.7pt] {$a$} (0,0); 
\draw[postaction={decorate}] (0,0) -- node[above,yshift=1.5pt] {$x$} (1,0); 
\draw[postaction={decorate}] (1,0) -- node[right,xshift=1.5pt,yshift=0.7pt] {$b$} (1,-1); 
\draw[postaction={decorate}] (1,-1) -- node[below,yshift=-1.5pt] {$x$} (0,-1); 

\draw (0.2,-0.4) -- (0.4,-0.2); \draw (0.2,-0.6) -- (0.6,-0.2); \draw (0.2,-0.8) -- (0.8,-0.2); \draw
(0.4,-0.8) -- (0.8,-0.4); \draw (0.6,-0.8) -- (0.8,-0.6);
\end{tikzpicture}\quad
\begin{tikzpicture}[scale=1.2,baseline=-6mm,line width=0.5pt,decoration={
    markings,
    mark=at position 0.5 with {\arrow[xshift=3.333pt]{triangle 60}}},
    ]
\filldraw[black] (0,0) circle (1pt) (0,-1) circle (1pt) (1,0) circle (1pt) (1,-1) circle (1pt);

\draw[postaction={decorate}] (0,-1) -- node[left,xshift=-1.5pt,yshift=0.7pt] {$a$} (0,0); 
\draw[postaction={decorate}] (0,0) -- node[above,yshift=1.5pt] {$y$} (1,0); 
\draw[postaction={decorate}] (1,0) -- node[right,xshift=1.5pt,yshift=0.7pt] {$a$} (1,-1); 
\draw[postaction={decorate}] (0,-1) -- node[below,yshift=-1.5pt] {$x$} (1,-1); 

\draw (0.2,-0.4) -- (0.4,-0.2);
\draw (0.2,-0.6) -- (0.6,-0.2);
\draw (0.2,-0.8) -- (0.8,-0.2);
\draw (0.4,-0.8) -- (0.8,-0.4);
\draw (0.6,-0.8) -- (0.8,-0.6);
\end{tikzpicture}\vspace{0.2cm}

\begin{tikzpicture}[scale=1.2,baseline=-6mm,line width=0.5pt,decoration={
    markings,
    mark=at position 0.5 with {\arrow[xshift=3.333pt]{triangle 60}}},
    ]
\filldraw[black] (0,0) circle (1pt) (0,-1) circle (1pt) (1,0) circle (1pt) (1,-1) circle (1pt);

\draw[postaction={decorate}] (0,-1) -- node[left,xshift=-1.5pt,yshift=0.7pt] {$b$} (0,0); 
\draw[postaction={decorate}] (0,0) -- node[above,yshift=1.5pt] {$y$} (1,0); 
\draw[postaction={decorate}] (1,-1) -- node[right,xshift=1.5pt,yshift=0.7pt] {$a$} (1,0); 
\draw[postaction={decorate}] (1,-1) -- node[below,yshift=-1.5pt] {$y$} (0,-1); 

\draw (0.2,-0.4) -- (0.4,-0.2);
\draw (0.2,-0.6) -- (0.6,-0.2);
\draw (0.2,-0.8) -- (0.8,-0.2);
\draw (0.4,-0.8) -- (0.8,-0.4);
\draw (0.6,-0.8) -- (0.8,-0.6);
\end{tikzpicture}\quad
\begin{tikzpicture}[scale=1.2,baseline=-6mm,line width=0.5pt,decoration={
    markings,
    mark=at position 0.5 with {\arrow[xshift=3.333pt]{triangle 60}}},
    ]
\filldraw[black] (0,0) circle (1pt) (0,-1) circle (1pt) (1,0) circle (1pt) (1,-1) circle (1pt);

\draw[postaction={decorate}] (0,-1) -- node[left,xshift=-1.5pt,yshift=0.7pt] {$b$} (0,0); 
\draw[postaction={decorate}] (1,0) -- node[above,yshift=1.5pt] {$x$} (0,0); 
\draw[postaction={decorate}] (1,0) -- node[right,xshift=1.5pt,yshift=0.7pt] {$b$} (1,-1); 
\draw[postaction={decorate}] (0,-1) -- node[below,yshift=-1.5pt] {$y$} (1,-1); 

\draw (0.2,-0.4) -- (0.4,-0.2);
\draw (0.2,-0.6) -- (0.6,-0.2);
\draw (0.2,-0.8) -- (0.8,-0.2);
\draw (0.4,-0.8) -- (0.8,-0.4);
\draw (0.6,-0.8) -- (0.8,-0.6);
\end{tikzpicture}
}

\newcommand{\BMFourAutomSymmetric}{%
{\begin{tikzpicture}[>=stealth,scale=0.5, shorten >=2pt,node distance=4cm,on grid,auto,thick,every initial
by arrow/.style={*->}]
(0,0) \node[state] (x){$a$};
(2,0) \node[state] (yy) [below=of x] {$b$};
(0,2) \node[state] (y) [right=of x] {$a^{-1}$};
(2,2) \node[state] (xx) [right=of yy] {$b^{-1}$};

\tikzstyle{every node}=[font=\footnotesize]
 \path[->]
 (x) edge node [above] {$x^{-1}|y^{-1}$, $y|x$} (y)
 (x) edge [bend right] node [left] {$y^{-1}|y$} (yy)
 (y) edge [bend right] node [above] {$x|y$, $y^{-1}|x^{-1}$} (x)
 (y) edge node [above] {$y|y^{-1}$} (xx)
 (yy) edge node [below] {$y|y^{-1}$} (x)
 (yy) edge [bend right] node [below] {$x^{-1}|y$, $y^{-1}|x$} (xx)
 (xx) edge [bend right] node [right] {$y^{-1}|y$} (y)
 (xx) edge node [below] {$y|x^{-1}$, $x|y^{-1}$} (yy)
 (xx) edge [bend right] node [below, sloped] {$x^{-1}|x$} (x)
 (x) edge [bend right] node [above, sloped] {$x|x^{-1}$} (xx)
 (y) edge [bend right] node [above, sloped] {$x^{-1}|x$} (yy)
 (yy) edge [bend right] node [below, sloped] {$x|x^{-1}$} (y);
\end{tikzpicture}}
}

\newcommand{\WangTileC}[5]{%
\begin{pspicture}(0,0)
\pgfmathsetmacro{\HalfEdge}{#5/2}
\pspolygon*[linecolor=#1](0,0)(0,-#5)(\HalfEdge,-\HalfEdge)
\pspolygon*[linecolor=#4](0,-#5)(#5,-#5)(\HalfEdge,-\HalfEdge)
\pspolygon*[linecolor=#3](#5,-#5)(#5,0)(\HalfEdge,-\HalfEdge)
\pspolygon*[linecolor=#2](#5,0)(0,0)(\HalfEdge,-\HalfEdge)
\end{pspicture}}

\begin{document}

\maketitle

\begin{abstract}
A finite, connected, $(d+1)$-regular graph $G$ is called Ramanujan if every its eigenvalue $\lambda$ satisfies either $\lambda=\pm (d+1)$ or $|\lambda|\leq 2\sqrt{d}$. The Ramanujan condition corresponds to the optimal rate of decay of correlations for the associated non-backtracking edge subshift. We consider a higher-dimensional generalization of this observation. We introduce the notion of a $d$-regular $\mathbb{Z}^{\delta}$-subshift of finite type, and we define a Ramanujan subshift as a $d$-regular $\mathbb{Z}^{\delta}$-subshift with an optimal rate of decay of correlations. We show that for every odd prime power $q\geq 3$ and dimension $\delta<q$, there exists a $q$-regular Ramanujan $\mathbb{Z}^{\delta}$-subshift. The construction is based on the quaternionic lattices over $\mathbb{F}_q(t)$ introduced by Rungtanapirom-Stix-Vdovina (2019). Each of our $q$-regular Ramanujan subshifts gives rise to a family of non-bipartite $(q+1)$-regular Ramanujan graphs. These graphs are very explicit and local in the strong sense: the neighbors of any vertex can be computed by an explicit Mealy automaton associated with the subshift. As a byproduct, for every odd prime power $q$, we get a single lifting rule that can be iterated to produce an infinite family of $(q+1)$-regular Ramanujan graphs.

%

\vspace{0.1cm}\textit{2020 Mathematics Subject Classification}: 37B51, 37A25, 05C48

\textit{Keywords}: subshift, mixing rate, Ramanujan graph, Mealy automaton.
\end{abstract}



\section{Introduction}

A Ramanujan graph is a finite, connected, $(d+1)$-regular graph such that every eigenvalue $\lambda$ of its adjacency matrix satisfies either $\lambda=\pm (d+1)$ or $|\lambda|\leq 2\sqrt{d}$. The first constructions of infinite families of Ramanujan graphs, obtained in \cite{RamanujanGraphs1988,Margulis1988,Morgenstern1994}, were limited to the values of $d$ that are prime powers and relied on the proof of the generalized Ramanujan conjecture in specific number-theoretic contexts.
The existence of Ramanujan graphs for other degrees remained an open problem for many years until the breakthrough result of \cite{MarcusSpielmanSrivastava2015}, which non-constructively proved the existence of infinite families of bipartite Ramanujan graphs for all degrees. The non-bipartite case is still open.

The notion of Ramanujan graphs has inspired many interesting generalizations.
Ramanujan hypergraphs and complexes \cite{LiPatrick1996} provide higher-dimensional analogues that extend the concept of optimal expansion from graphs to simplicial complexes.
Ramanujan digraphs \cite{Parzanchevski2020} adapt the definition to directed graphs with non-symmetric adjacency matrices, while Ramanujan higher-rank graphs \cite{LarsenVdovina2024} generalize the property to higher dimensions.

The Ramanujan property admits the following characterization: The Ihara zeta function of a regular graph satisfies an analog of the Riemann hypothesis if and only if the graph is Ramanujan (see \cite{Hashimoto1989}). Equivalently, this condition can be expressed through the spectrum of the non-backtracking matrix, also known as the Bass-Hashimoto matrix. Although not normal, this matrix is unitarily similar to a block-diagonal matrix with blocks of sizes at most two (see \cite{LubetzkyPeres2016}). This spectral characterization admits a dynamical reformulation: the non-backtracking edge subshift $(X,\sigma,\mu)$ associated to a $(d+1)$-regular graph $G$ is strongly mixing and for any locally constant functions $f,g:X\rightarrow\mathbb{C}$ there exists a constant $C>0$ such that for all $n\in\mathbb{N}$,
\begin{equation}\label{eqn:intro_mixing_estimate}
\left|\int_X (f\circ\sigma^n)g d\mu - \int_Xf d\mu\int_Xg d\mu\right|\leq Cn\left(\tfrac{1}{\sqrt{d}}\right)^n
\end{equation}
if and only if $G$ is a non-bipartite Ramanujan graph.

In this paper, we consider a higher-dimensional generalization of property (\ref{eqn:intro_mixing_estimate}). Specifically, we define $d$-regular $\mathbb{Z}^{\delta}$-subshifts as subshifts of finite type in which any
$\delta$-dimensional rectangular pattern admits exactly $d$ distinct extensions to a larger rectangular pattern in each coordinate direction. Any $d$-regular $\mathbb{Z}^\delta$-subshift $X$ admits a natural invariant measure $\mu$ with zero topological entropy, in which all admissible one-step extensions of any rectangular pattern are equally likely. We call a $d$-regular subshift $(X,\mathbb{Z}^\delta,\mu)$ Ramanujan, if the dynamical system $(X,\mathbb{Z}^\delta,\mu)$ is strongly mixing, and there exists $r\geq 0$ with the following property: for any locally constant functions $f,g:X\rightarrow\mathbb{C}$, there exists a constant $C>0$ such that for all $n\in\mathbb{Z}^\delta$,
\begin{equation}\label{eqn:intro_mixing_Zdelta}
\left|\int_X (f\circ \sigma^n)gd\mu - \int_Xfd\mu\int_Xgd\mu\right|\leq C\|n\|_\infty^{r}\left(\tfrac{1}{\sqrt{d}}\right)^{\|n\|_\infty},
\end{equation}
where $\|n\|_\infty=\max(|n_1|,\ldots,|n_\delta|)$. A single Ramanujan subshift produces an infinite family of directed Ramanujan graphs by considering rectangular patterns and shifting in one dimension.
We prove:

\begin{theorem}
For every odd prime power $q\geq 3$ and dimension $\delta<q$, there exists a $q$-regular Ramanujan $\mathbb{Z}^\delta$-subshift.
\end{theorem}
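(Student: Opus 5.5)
We build the subshift from the Rungtanapirom--Stix--Vdovina (RSV) quaternionic lattices over $\mathbb{F}_q(t)$, and get the mixing estimate from the Ramanujan property of their finite quotients. For odd $q$ and $\delta<q$, RSV give a torsion-free lattice $\Gamma$ acting simply transitively on the vertices of a product of $\delta$ trees $T_{q+1}^{\delta}$. This lattice comes with a \emph{$\delta$-dimensional complete square complex structure}. There are $\delta$ generating sets $A_1,\dots,A_\delta$, each closed under inverses and of size $q+1$. For $i\neq j$, every product $a_ia_j$ with $a_i\in A_i$, $a_j\in A_j$ can be uniquely rewritten as $a_j'a_i'$. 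Hence every word in direction $i$ uniquely commutes past a word in direction $j$.

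The subshift $X\subset \prod_{i}(A_i)^{\mathbb{Z}^\delta}$ records, at each lattice point $n$, the $\delta$ edge labels leaving the cube corner $n$. Its local rules are:
\begin{itemize}
\item along direction $i$, the label in $A_i$ follows a non-backtracking path ($a$ is never followed by $a^{-1}$);
\item on each elementary $2$-face, the four labels satisfy the square relation.
\end{itemize}
Admissible configurations are exactly the cubical maps $\mathbb{Z}^\delta\to T_{q+1}^\delta$ that are geodesic in each coordinate, modulo $\Gamma$. Uniqueness of the square relations shows that a rectangular pattern has exactly $q$ extensions in each direction: the $q$ non-backtracking choices of one new edge at a corner, after which the commutation rules force the rest of the face. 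So $X$ is $q$-regular. The uniform measure $\mu$ is the pushforward of Haar measure on the space of such flags, i.e. it is $\Gamma\backslash G$-invariant with $G=\prod \mathrm{PGL}_2(\mathbb{F}_q((t^{-1})))$ acting appropriately.

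For the mixing estimate, we reduce correlations of cylinder functions to matrix coefficients. The function $f\circ\sigma^n\cdot g$ integrated against $\mu$ becomes $\langle \pi(a^n)F,G\rangle$ on $L^2(\Gamma\backslash G/K')$, where $a^n=(a_1^{n_1},\dots,a_\delta^{n_\delta})$ is a product of non-backtracking shifts. Equivalently, it is a product over coordinates of powers of the Hashimoto operators $B_i$ restricted to a finite quotient, commuting by the square structure. Since locally constant functions depend on finitely many coordinates, we pass to a finite-index normal subgroup $\Gamma'\subset\Gamma$ determining the cylinders. The correlation then becomes $\langle B_1^{n_1}\cdots B_\delta^{n_\delta}F,G\rangle$ on the finite directed edge space of the quotient complex $\Gamma'\backslash T^\delta$, with constant functions removed.

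The key input is that these quotients are Ramanujan in each direction. We want every infinite-dimensional irreducible constituent of $L^2(\Gamma\backslash G)$ to be tempered in every factor, while the one-dimensional constituents are trivial (not sign characters). This is what RSV/Lafforgue-type results give for quaternion algebras over function fields: the automorphic representations in the Jacquet--Langlands correspondence satisfy the Ramanujan--Petersson conjecture, by Drinfeld's theorem. We also need to rule out one-dimensional characters other than the trivial one, which is where non-bipartiteness, i.e. oddness of $q$ and the choice of lattice, enters. Granting this, for each tempered local component the Hashimoto operator $B_i$ on it has eigenvalues of modulus $\sqrt q$, with Jordan blocks of size at most $2$ (Lubetzky--Peres). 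This yields $\|B_i^{m}|_{V}\|\le C(m+1)q^{m/2}$. Normalizing by $q^{-|n_i|}$ per step (the measure divides among $q$ extensions) and using commutativity, the correlation is bounded by
$$C\prod_i(|n_i|+1)\,q^{-|n_i|/2}\le C'\|n\|_\infty^{\delta}\,q^{-\|n\|_\infty/2},$$
which gives the statement with $r=\delta$. Strong mixing follows from the same estimate.

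The main obstacle is the spectral step. We must verify that every nontrivial automorphic constituent is tempered at \emph{each} of the $\delta$ places simultaneously, and that no nontrivial one-dimensional or partially non-tempered (e.g. trivial in one factor) constituents occur. A constituent trivial in one factor but not in the others would break decay along that coordinate. I would handle this via strong approximation for the norm-one group, which makes such representations one-dimensional globally, and then via the explicit parity structure of the RSV lattice to exclude the sign characters.
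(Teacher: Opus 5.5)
Your argument is correct in outline, but it takes a genuinely different route from the paper.

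\textbf{The paper's route.} The paper never decomposes $L^2(\Gamma\backslash G)$; it works one direction at a time.
\begin{enumerate}
\item If $\|n\|_\infty$ is attained in direction $\tau$, it covers both cylinders by cylinders with a common cross-section. The correlation then becomes an entry of a power of the transition matrix of $H^{(\tau)}_n$.
\item That matrix is the non-backtracking matrix of the Schreier graph $A_n$ of $\Gamma_\tau$ acting on the level $L_{S_0\setminus\{\tau\}}(n)$.
\item $A_n$ is shown to be connected (strong approximation) and Ramanujan (a quotient of $T_\tau$ by a congruence subgroup, RSV Thm.~6.14). It is non-bipartite because an element of order $3$ in $\mathrm{PSL}_2(\mathbb{F}_q)$ gives an odd cycle that survives the lifts.
\item Lubetzky--Peres, together with Parzanchevski's uniform constant, then gives $r=1$.
\end{enumerate}

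\textbf{Your route.} You realize $X$ as $\Gamma\backslash G/M$, where $M=\prod_v M_v$ and $M_v$ is the pointwise stabilizer of a geodesic in $T_v$. Correlations become matrix coefficients of $a^n$ between vectors of the finite-dimensional space $L^2(\Gamma\backslash G)^{K'}$. You then bound each tempered local factor; the right tool here is the Harish-Chandra $\Xi$-bound for $K'$-finite vectors, since Lubetzky--Peres concerns non-backtracking matrices of finite graphs.

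\textbf{What each approach buys.} Yours gives the stronger joint estimate $\prod_v(|n_v|+1)q^{-|n_v|/2}$. It decays in $\|n\|_1$ and handles simultaneous offsets directly. It already yields $r=1$, not just $r=\delta$, because $\sup_m(m+1)q^{-m/2}<\infty$.

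The cost is heavier input:
\begin{enumerate}
\item You need the congruence property of $\Gamma_S$ to identify $L^2(\Gamma_S\backslash G)$ with automorphic forms on $D^\times$.
\item You need Ramanujan--Petersson for possibly \emph{ramified} local components at places of $S_0$. Alternatively, use a Markov-property reduction to Iwahori level in the direction of large shift.
\end{enumerate}
The paper only uses temperedness of spherical components at the single place $\tau$, packaged as the Ramanujan property of explicit graphs. These are the same graphs that feed the Mealy-automaton result.

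\textbf{Corrections to your sketch.}
\begin{enumerate}
\item The factors of $G$ are $\mathrm{PGL}_2(K_v)$ for the completions at $t=v$, $v\in S_0$. The place $\infty$ is ramified in $D$, so $\mathrm{PGL}_2(\mathbb{F}_q((t^{-1})))$ is not a factor.
\item Cylinder functions on boxes larger than a unit cube are not functions on the corners of a finite cover $\Gamma'\backslash T^\delta$. Work in $L^2(\Gamma\backslash G)^{K'}$, or with a higher block presentation; there the directional Markov operators commute and preserve $1^\perp$.
\item Your requirement that every one-dimensional constituent of $L^2(\Gamma\backslash G)$ be trivial is false. Take the paper's example $q=3$, $\tau=1$, $\sigma=2$. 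The reduced norms on $PA_1$ and $PA_2$ are $1-t$ and $1+t=t-2$. At $t=2$, $1-t=-1$ is a non-square in $\mathbb{F}_3$, and at $t=1$, $1+t$ is a unit. Let $\chi_\sigma$ be the ramified quadratic character with $\chi_\sigma(t-2)=1$. Then $(g_\tau,g_\sigma)\mapsto(-1)^{v_\tau(\det g_\tau)}\chi_\sigma(\det g_\sigma)$ is a nontrivial character that is trivial on $\Gamma_S$.
\end{enumerate}

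What you actually need, and what does hold, concerns only right-$M$-invariant vectors. Since $\det(M_v)=\mathcal{O}_v^\times$, only unramified characters $\prod_{v\in S'}(-1)^{v(\det g_v)}$ can occur there. These are nontrivial on $\Gamma_S$ for $S'\neq\emptyset$. The reason is that $1+\alpha F\in A_\tau$ has reduced norm $1-\tau^{-1}t$, which is a uniformizer at $\tau$ and a unit at the other places of $S_0$.
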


Our construction was inspired by the work of Mozes \cite{Mozes1992}, who provided explicit examples of zero-entropy, mixing of all orders $\mathbb{Z}^2$-subshifts of finite type. For two distinct odd primes $p$ and $q$, the $\mathbb{Z}^2$-subshifts $X_{p,q}$ is constructed from a special quaternionic lattice over $\mathbb{Q}$ that acts simply transitively on the product of two regular trees $T_{p+1}\times T_{q+1}$. The mixing property of the subshift $X_{p,q}$ is established through its connection to Ramanujan graphs from \cite{RamanujanGraphs1988}.
However, since $p\neq q$, the subshifts $X_{p,q}$ are not regular in our sense and do not satisfy the property~(\ref{eqn:intro_mixing_Zdelta}), because the mixing rate depends on the direction.

Instead, we use the quaternionic lattices over $\mathbb{F}_q(t)$, which acts simply transitively on the product of $\delta$ copies of the tree $T_{q+1}$, constructed in \cite{StixVdovina2017,RSV2019} for every odd prime power $q$ and $\delta<q$. Our subshifts satisfy (\ref{eqn:intro_mixing_Zdelta}) with $r=1$, and the same estimate for the rate of decay of correlations holds for all orders of mixing. The Ramanujan property in our construction relies on the Ramanujan-Peterson conjecture for $GL_2$ over function fields, proved by Drinfeld. A natural problem arises:

\begin{problem}
Does for every $d\geq 2$ and $\delta\geq 2$, there exists a $d$-regular Ramanujan $\mathbb{Z}^\delta$-subshift? (A stronger version of the problem is to require that (\ref{eqn:intro_mixing_Zdelta}) holds with $r=0$.)
\end{problem}

Each of our $q$-regular Ramanujan subshifts gives rise to a family of non-bipartite $(q+1)$-regular Ramanujan graphs in each coordinate direction. These graphs are very explicit\footnote{A family of graphs is very explicit if there exists an efficient algorithm that, given a vertex, computes its neighbors in polylogarithmic time in the number of vertices.} in the strong sense: the neighbors of any vertex can be computed by an explicit Mealy automaton (finite-state transducer) associated with the subshift.
This also implies that these graphs are local\footnote{A family of graphs is local if each neighbor of a vertex can be computed by a function in which each output bit depends on just a constant number of input bits.} Ramanujan graphs in the sense of \cite{ViolaWigderson2018,BatraSaxenaShringi2023}.

\begin{theorem}
For every odd prime power $q\geq 3$, there exists a Mealy automaton $M$ over a symmetric alphabet of size $q+1$ such that its action graphs $G_n(M)$ (and the graph of the $n$-fold iterated automaton $M^{(n)}$), when restricted to reduced words of length $n$, are non-bipartite $(q+1)$-regular Ramanujan graphs for all $n\geq 1$.
\end{theorem}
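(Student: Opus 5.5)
The plan is to use the case $\delta=2$ of the construction of Rungtanapirom--Stix--Vdovina. This gives a torsion-free lattice $\Gamma$ in $PGL_2(\mathbb{F}_q((t)))\times PGL_2(\mathbb{F}_q((t)))$ that acts simply transitively on the vertices of $T_{q+1}\times T_{q+1}$. Such a lattice has a presentation with two symmetric generating sets $A=\{a_1^{\pm1},\dots\}$ and $X=\{x_1^{\pm1},\dots\}$, each of size $q+1$, and square relations $ax=x'a'$. Each pair $(a,x)$ determines a unique $(x',a')$, so the relations form a VH-structure. From this we read off a Mealy automaton $M$ with state set $A$ and alphabet $X$. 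The transition-output rule sends the state $a$ on input $x$ to the output $x'$ and new state $a'$, as in the automaton pictured for $\BMFourAutomSymmetric$. Iterating the square relations describes the action of the word $a$ on reduced words $x_1\cdots x_n$, that is, on the sphere of radius $n$ in the tree $T_{q+1}$ spanned by $X$. The VH-structure also ensures that reduced words map to reduced words, so the action graph $G_n(M)$ on reduced words of length $n$ is well defined and $(q+1)$-regular.

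The key identification comes next. Let $\Gamma_A$ be the subgroup generated by $A$. It acts freely and transitively on the vertices of its tree factor. The action of $A$ on the length-$n$ level of the $X$-tree factor can be identified with the action of $\Gamma_A$ on $\Gamma_X/\Gamma_X(n)$, equivalently on a quotient of $T_{q+1}$ by a congruence-type subgroup. Concretely, $G_n(M)$ is the quotient of the $A$-tree by the stabilizer in $\Gamma$ of a ball of radius $n$ in the other tree factor. This is a finite index subgroup of $\Gamma$ that is congruence-like: it contains the principal congruence subgroup modulo $t^n$ (or the appropriate place). Its projection to the first factor gives a finite quotient graph of $T_{q+1}$.

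Then I will invoke Drinfeld's Ramanujan--Petersson theorem for $GL_2$ over function fields, transferred to the quaternion algebra via Jacquet--Langlands, as in Lubotzky--Phillips--Sarnak and Morgenstern. This shows that the nontrivial eigenvalues of these quotients satisfy $|\lambda|\le 2\sqrt q$. The main obstacle is this transfer. One must check that every automorphic representation contributing to $L^2$ of the quotient is either one-dimensional or has tempered local component at the relevant place. One-dimensional representations give $\pm(q+1)$, and tempered components give the Ramanujan bound. Equally delicate is excluding $-(q+1)$, which amounts to non-bipartiteness. I would show this directly by exhibiting an odd cycle in $G_n(M)$ from the automaton. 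A self-loop or triangle at level $1$, lifted through the square relations, would work. Alternatively, one can note that the one-dimensional characters of $\Gamma$ factoring through the sign of the determinant are trivial on the projection, because $\Gamma_A$ contains elements of both parities in tree distance.

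Finally, the iterated automaton $M^{(n)}$ is handled the same way. Its graph on reduced words of length $n$ is the Schreier graph of the same lattice acting on the product structure, with the roles of the factors exchanged, so the same argument applies verbatim.
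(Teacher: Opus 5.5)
Your route is the paper's. You take the Rungtanapirom--Stix--Vdovina lattice for $S=\{0,\infty,\tau,\sigma\}$ and read off the automaton from the VH-datum $D_{\tau,\sigma}$. You identify $G_n(M)$ on reduced words with the Schreier graph of $\Gamma_A=\Gamma_\tau$ on the $n$-th level of $T_\sigma$. The spectral bound comes from Drinfeld via Jacquet--Langlands; this is citable, and the paper just quotes Theorem~6.14 of Rungtanapirom--Stix--Vdovina. Finally, $M^{(n)}=G_n(\partial M)$ is handled by exchanging the factors, using $\partial M_{\tau,\sigma}\cong M_{\sigma,\tau}$.

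There is, however, a genuine gap: you never show that $G_n(M)$ is connected, i.e.\ that $\Gamma_A$ acts transitively on every level of the $X$-tree. Ramanujan graphs are connected by definition. Your description of $G_n(M)$ as a single quotient of $T_{q+1}$ by one congruence-type subgroup tacitly assumes a single orbit. With several orbits the eigenvalue $q+1$ is multiple, and no bound on the nontrivial spectrum of each piece repairs that.

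Level-transitivity is not a formal consequence of the VH-structure. For the $F_2\times F_2$ datum, the vertical generators act trivially on the other tree. This is exactly where arithmetic is needed, and the paper proves it in Proposition~\ref{prop:graphs are connected}. By strong approximation, $\Lambda_\tau$ surjects onto $\mathrm{PGL}_2(\mathcal{O}_\sigma/\pi_\sigma^n\mathcal{O}_\sigma)$. Since $\Gamma_\tau$ is normal in $\Lambda_\tau$ of index $2(q+1)$, prime to $p$, its image contains all unipotent elements and hence $\mathrm{PSL}_2(\mathcal{O}_\sigma/\pi_\sigma^n\mathcal{O}_\sigma)$, which is transitive on the $n$-th level.

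Your non-bipartiteness step is also incomplete, and it rests on the same input. Lifting an odd closed walk from level $1$ is sound: $G_n(M)\to G_1(M)$ is a covering of odd degree $q^{n-1}$, so the preimage of an odd closed walk contains one. But you do not produce an odd cycle in $G_1(M)$, and a loop or triangle need not exist a priori. The paper gets it from the fact that the level-$1$ image of $\Gamma_A$ is $\mathrm{PSL}_2(\mathbb{F}_q)$ or $\mathrm{PGL}_2(\mathbb{F}_q)$, again via strong approximation. One way to finish: a bipartition swapped by every generator would yield an index-two subgroup of the image preserving a proper subset of $\mathbb{P}^1(\mathbb{F}_q)$, and neither group has an intransitive subgroup of index two.

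Your alternative parity argument is aimed at the wrong group. Bipartiteness of $H\backslash T_{q+1}$ is decided by whether the vertex stabilizer $H\le\Gamma_A$ contains an element of odd length in $A$. That $\Gamma_A$ itself has elements of both parities is automatic and proves nothing.

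Finally, a harmless slip. $G_n(M)$ is the quotient of the $A$-tree by a vertex stabilizer, not by the pointwise stabilizer of a ball. The latter quotient is the Cayley graph of the finite image group, which covers $G_n(M)$, so the spectral bound still descends, once connectivity is known.
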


In \cite{MarcusSpielmanSrivastava2015}, it was shown that every bipartite Ramanujan graph admits a $2$-lift that is itself Ramanujan. This result was generalized to $3$-lifts in \cite{LiuPV} and to arbitrary $d$-lifts in \cite{Doron}. By iterating this procedure, one obtains an infinite family of bipartite Ramanujan graphs for any degree $d\geq 3$. However, the lifting procedures in these papers are non-constructive. The action graphs $G_n(M)$ of a Mealy automaton $M$ form a covering family with a fixed (deterministic) lifting rule. In particular, for every odd prime power $q\geq 3$, a single lifting rule can be iterated to produce an infinite family of $(q+1)$-regular Ramanujan graphs.


\begin{problem}
Does for every $d\geq 3$, there exist a Mealy automaton $M$ whose action graphs $G_n(M)$ are $d$-regular Ramanujan graphs?
Does for every $d\geq 3$, there exists a single lifting rule producing an infinite family of $d$-regular Ramanujan graphs?
\end{problem}

In Figure \ref{fig:Example1_p=3} we demonstrate the smallest example for $q=3$.

\vspace{0.2cm}
The paper is organized as follows. In Section 2 we recall basic material on one-dimensional subshifts of finite type with particular emphasis on mixing properties. Section 3 develops the theory of regular $\mathbb{Z}^2$-subshifts and investigates their mixing behavior. In Section 4 we consider the construction of regular $\mathbb{Z}^2$-subshifts arising from a VH-datum and from lattices in the product of two trees. Section 5 establishes that the graphs associated with a VH-datum admit an explicit description via Mealy automata, showing that their adjacency structure is generated by a fixed automaton. Finally, in the last section we recall the construction of quaternionic lattices developed in \cite{RSV2019} and use it to construct $q$-regular Ramanujan $\mathbb{Z}^m$-subshifts.


\newcommand{\WangTileax}[1]{\WangTileC{cyan}{yellow}{orange}{purple}{#1}}
\newcommand{\WangTileay}[1]{\WangTileC{cyan}{pink}{blue}{yellow}{#1}}
\newcommand{\WangTileayi}[1]{\WangTileC{cyan}{red}{green}{pink}{#1}}
\newcommand{\WangTileaxi}[1]{\WangTileC{cyan}{purple}{blue}{red}{#1}}
\newcommand{\WangTileaix}[1]{\WangTileC{blue}{yellow}{cyan}{pink}{#1}}
\newcommand{\WangTileaiy}[1]{\WangTileC{blue}{pink}{orange}{red}{#1}}
\newcommand{\WangTileaiyi}[1]{\WangTileC{blue}{red}{cyan}{purple}{#1}}
\newcommand{\WangTileaixi}[1]{\WangTileC{blue}{purple}{green}{yellow}{#1}}
\newcommand{\WangTilebx}[1]{\WangTileC{green}{yellow}{blue}{purple}{#1}}
\newcommand{\WangTileby}[1]{\WangTileC{green}{pink}{cyan}{red}{#1}}
\newcommand{\WangTilebyi}[1]{\WangTileC{green}{red}{orange}{yellow}{#1}}
\newcommand{\WangTilebxi}[1]{\WangTileC{green}{purple}{orange}{pink}{#1}}
\newcommand{\WangTilebix}[1]{\WangTileC{orange}{yellow}{green}{red}{#1}}
\newcommand{\WangTilebiy}[1]{\WangTileC{orange}{pink}{green}{purple}{#1}}
\newcommand{\WangTilebiyi}[1]{\WangTileC{orange}{red}{blue}{pink}{#1}}
\newcommand{\WangTilebixi}[1]{\WangTileC{orange}{purple}{cyan}{yellow}{#1}}

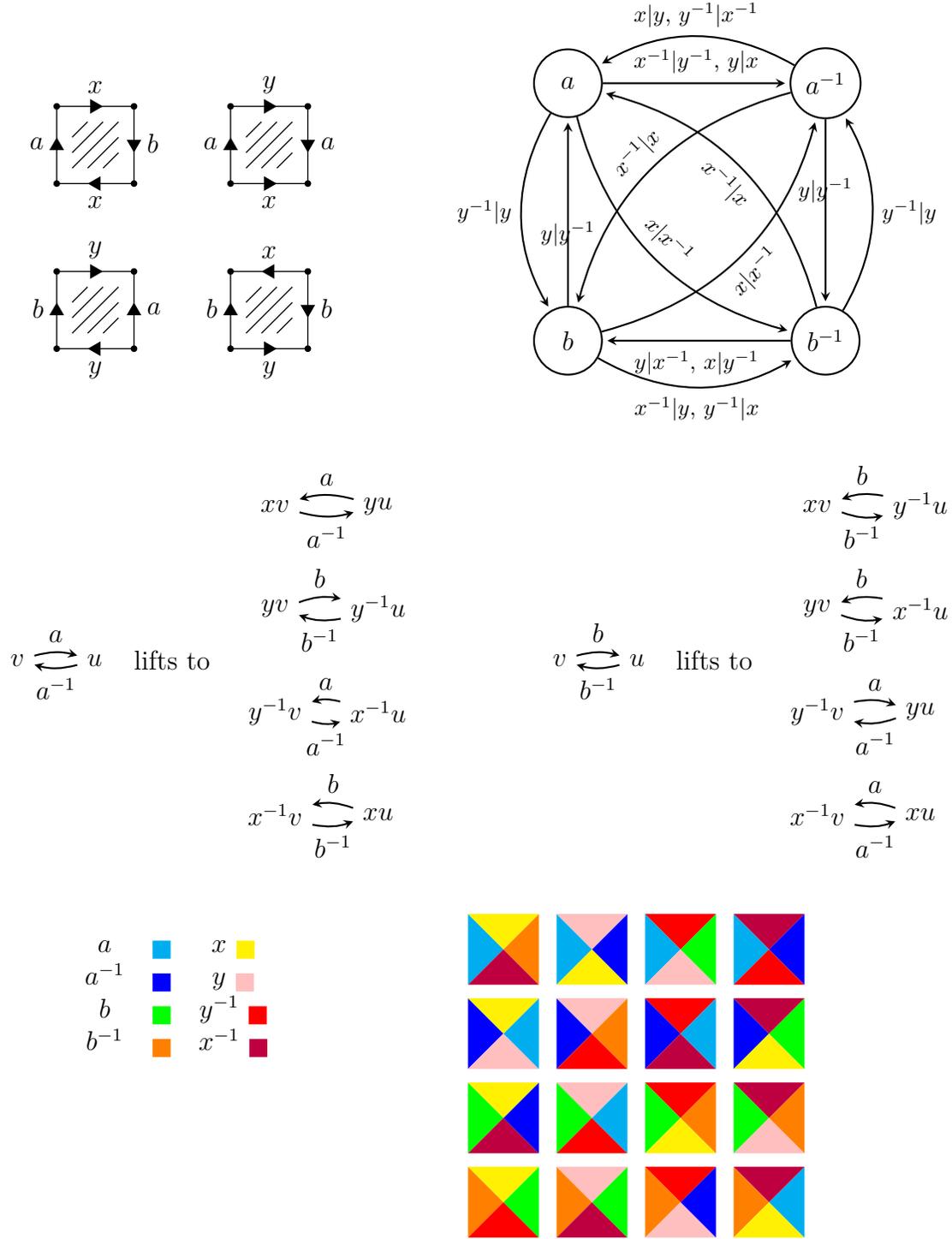
\begin{figure}[p]
\begin{minipage}[b]{0.43\textwidth}
\begin{center}
\ComplexFourRF
\end{center}\vspace{0.2cm}
\end{minipage}
\begin{minipage}[b]{0.54\textwidth}
\begin{center}
\BMFourAutomSymmetric
\end{center}
\end{minipage}\\
\begin{minipage}[b]{0.48\textwidth}
\begin{center}\vspace{0.5cm}
\begin{tikzpicture}[scale=0.8,>=stealth,thick]
    \node (v) at (0,0) {$v$};
    \node (u) at (1.5,0) {$u$};
    \draw[->] (v) to[bend left=15] node[above] {$a$} (u);
    \draw[->] (u) to[bend left=15] node[below] {$a^{-1}$} (v);
    \node at (3,0) {lifts to};

    \begin{scope}[shift={(5,0)}]
        \node (xv) at (0, 3) {$xv$};
        \node (yu) at (2, 3) {$yu$};
        \node (yv) at (0, 1) {$yv$};
        \node (yinvu) at (2, 1) {$y^{-1}u$};
        \node (yinvv) at (0, -1) {$y^{-1}v$};
        \node (xinvu) at (2, -1) {$x^{-1}u$};
        \node (xinvv) at (0, -3) {$x^{-1}v$};
        \node (xu) at (2, -3) {$xu$};

        \draw[->] (xu) to[bend right=15] node[above] {$b$} (xinvv);
        \draw[->] (xinvv) to[bend right=15] node[below] {$b^{-1}$} (xu);
        \draw[->] (yv) to[bend left=15] node[above] {$b$} (yinvu);
        \draw[->] (yinvu) to[bend left=15] node[below] {$b^{-1}$} (yv);
        \draw[->] (yinvv) to[bend right=15] node[below] {$a^{-1}$} (xinvu);
        \draw[->] (xinvu) to[bend right=15] node[above] {$a$} (yinvv);
        \draw[->] (xv) to[bend right=15] node[below] {$a^{-1}$} (yu);
        \draw[->] (yu) to[bend right=15] node[above] {$a$} (xv);
    \end{scope}
\end{tikzpicture}
\end{center}
\end{minipage}\hfil
\begin{minipage}[b]{0.48\textwidth}
\begin{center}\vspace{-1.2cm}
\begin{tikzpicture}[scale=0.8,>=stealth,thick]
    \node (v) at (0,0) {$v$};
    \node (u) at (1.5,0) {$u$};
    \draw[->] (v) to[bend left=15] node[above] {$b$} (u);
    \draw[->] (u) to[bend left=15] node[below] {$b^{-1}$} (v);
    \node at (3,0) {lifts to};

    \begin{scope}[shift={(5,0)}]
        \node (xv) at (0, 3) {$xv$};
        \node (yinvu) at (2, 3) {$y^{-1}u$};
        \node (yv) at (0, 1) {$yv$};
        \node (xinvu) at (2, 1) {$x^{-1}u$};
        \node (yinvv) at (0, -1) {$y^{-1}v$};
        \node (yu) at (2, -1) {$yu$};
        \node (xinvv) at (0, -3.0) {$x^{-1}v$};
        \node (xu) at (2, -3.0) {$xu$};

        \draw[->] (xv) to[bend right=15] node[below] {$b^{-1}$} (yinvu);
        \draw[->] (yinvu) to[bend right=15] node[above] {$b$} (xv);
        \draw[->] (yv) to[bend right=15] node[below] {$b^{-1}$} (xinvu);
        \draw[->] (xinvu) to[bend right=15] node[above] {$b$} (yv);
        \draw[->] (yinvv) to[bend left=15] node[above] {$a$} (yu);
        \draw[->] (yu) to[bend left=15] node[below] {$a^{-1}$} (yinvv);
        \draw[->] (xinvv) to[bend right=15] node[below] {$a^{-1}$} (xu);
        \draw[->] (xu) to[bend right=15] node[above] {$a$} (xinvv);
    \end{scope}
\end{tikzpicture}
\end{center}
\end{minipage}\vspace{0.7cm}
\begin{minipage}[b]{0.1\textwidth}
\textcolor{white}{a}
\end{minipage}
\begin{minipage}[b]{0.2\textwidth}
\begin{center}
\begin{tabular}{ c c c c}
$a$ & \fcolorbox{white}{cyan}{\rule{0pt}{2pt}\rule{2pt}{0pt}} & $x$ \fcolorbox{white}{yellow}{\rule{0pt}{2pt}\rule{2pt}{0pt}} \\
$a^{-1}$ & \fcolorbox{white}{blue}{\rule{0pt}{2pt}\rule{2pt}{0pt}} & $y$ \fcolorbox{white}{pink}{\rule{0pt}{2pt}\rule{2pt}{0pt}} \\
$b$ & \fcolorbox{white}{green}{\rule{0pt}{2pt}\rule{2pt}{0pt}} & $y^{-1}$ \fcolorbox{white}{red}{\rule{0pt}{2pt}\rule{2pt}{0pt}} \\
$b^{-1}$ & \fcolorbox{white}{orange}{\rule{0pt}{2pt}\rule{2pt}{0pt}} & $x^{-1}$ \fcolorbox{white}{purple}{\rule{0pt}{2pt}\rule{2pt}{0pt}}
\end{tabular}
\end{center}\vspace{1.2cm}
\end{minipage}\hspace{2cm}
\begin{minipage}[b]{0.5\textwidth}
\begin{center}
\hspace{-1.3cm}\WangTileax{1.1}\hspace{1.1cm} \  \WangTileay{1.1}\hspace{1.1cm} \  \WangTileayi{1.1}\hspace{1.1cm} \  \WangTileaxi{1.1}\hspace{1.1cm} \  \\[0.8cm]
\hspace{-1.3cm}\WangTileaix{1.1}\hspace{1.1cm} \  \WangTileaiy{1.1}\hspace{1.1cm} \  \WangTileaiyi{1.1}\hspace{1.1cm} \  \WangTileaixi{1.1}\hspace{1.1cm} \  \\[0.8cm]
\hspace{-1.3cm}\WangTilebx{1.1}\hspace{1.1cm} \  \WangTileby{1.1}\hspace{1.1cm} \  \WangTilebyi{1.1}\hspace{1.1cm} \  \WangTilebxi{1.1}\hspace{1.1cm} \  \\[0.8cm]
\hspace{-1.3cm}\WangTilebix{1.1}\hspace{1.1cm} \  \WangTilebiy{1.1}\hspace{1.1cm} \  \WangTilebiyi{1.1}\hspace{1.1cm} \  \WangTilebixi{1.1}\hspace{1.1cm} \  \\[0.8cm]
\end{center}
\end{minipage}
\vspace{1.5cm}
\caption{The squares representing relations of the lattice $\Gamma_{1,2}=\langle a,b,x,y\,|\, ax=x^{-1}b^{-1}, ay=xa^{-1}, by=y^{-1}a, bx^{-1}=yb^{-1}\rangle$ in the product $T_4\times T_4$, associated Mealy automaton, deterministic $3$-lifts generating $4$-regular Ramanujan graphs, and Wang tileset. The $3$-regular Ramanujan $\mathbb{Z}^2$-subshift is obtained by forbidding tile color combinations $cc^{-1}$ for $c\in\{a^{\pm 1},b^{\pm 1}, x^{\pm 1}, y^{\pm 1}\}$.}
\label{fig:Example1_p=3}
\end{figure}

\vspace{0.2cm}
\textbf{Acknowledgments.} 
The authors are sincerely grateful to Alexander Gorodnik for valuable discussions on quantitative mixing and to Jakob Stix for answering numerous questions. The second author gratefully acknowledges the support of the Simons Foundation, the grant MP-TSM-00002045 (the Travel Support for Mathematicians).

\setcounter{section}{1}
\section{One-dimensional subshifts of finite type}

In this section, we review basic facts about the mixing properties of subshifts of finite type (see \cite{Baladi,Walters1982} for more information), and then define regular subshifts along with a dynamical interpretation of the Ramanujan property.

\vspace{0.2cm}
\textbf{Subshifts of finite type.}
Let $V$ be a finite set of symbols (i.e., an alphabet). The full subshift on $V$ is the topological dynamical system $(V^{\mathbb{Z}},\sigma)$, where $V^{\mathbb{Z}}$ is the set of all two-sided infinite sequences $(v_i)_{i\in\mathbb{Z}}$, $v_i\in V$ endowed with the product topology of discrete sets $V$ and $\sigma$ is the shift map. A \textit{subshift} is a closed subset of $V^{\mathbb{Z}}$ that is invariant under the shift map.

We will work with subshifts of finite type, which can be defined using matrices.
Let $A$ be a \textit{transition matrix} over $V$, that is, a square matrix with entries in $\{0,1\}$, whose rows and columns are indexed by the symbols of $V$, and which has no zero rows or columns.
The \textit{subshift of finite type} defined by $A$ is the set
\[
X_A=\{ (v_i)_{i\in\mathbb{Z}}\in V^{\mathbb{Z}} : A(v_i,v_{i+1})=1 \mbox{ for all
$i\in\mathbb{Z}$} \}\subset V^{\mathbb{Z}}.
\]
A transition matrix $A$ can be viewed as the adjacency matrix of a finite graph $G_A=(V,E)$ (directed, with no multiple edges, but loops are allowed). The associated subshift $X_A$ is then naturally interpreted as the vertex subshift $X_G$ of the graph $G_A$, consisting of two-sided infinite walks in the graph.

The subshift $X_A$ is irreducible if the transition matrix $A$ is irreducible\footnote{A non-negative square matrix $A$ is called irreducible if, for every pair of indices $i,j$, there exists an integer $k \geq 1 $ such that \( (A^k)_{i,j} > 0 \).}, which is equivalent to the associated graph $G_A$ being strongly connected. Moreover, the dynamical system $(X_A,\sigma)$ is topologically mixing when the matrix $A$ is primitive\footnote{A non-negative square matrix $A$ is called primitive if all entries of $A^k$ are strictly positive for some integer $k\geq 1$.}, equivalently, the graph $G_A$ is strongly connected and aperiodic\footnote{A graph is called aperiodic if the greatest common divisor of the lengths of all simple cycles is equal to one.}.

The topological dynamical system $(X_A,\sigma)$ admits many invariant measures. For example, for any stochastic matrix $P$ over $V$ that is compatible with $A$ (i.e., $P(u,v)>0$ whenever $A(u,v)>0$ for all $u,v\in V$), and a positive stationary distribution $p$, there exists a Markov measure $\mu_{P,p}$ on $X_A$ that is invariant under the shift $\sigma$. The dynamical system $(X_A,\sigma,\mu_{P,p})$ is \textit{strongly mixing}, that is,
for all $f,g \in L^2(X_A,\mu_{P,p})$,
\[
\lim_{n \to \infty} \int_{X_A} (f\circ \sigma^n) g d\mu
=\int_{X_A} f\, d\mu \int_{X_A} g d\mu,
\]
if and only if $A$ is primitive. This is equivalent, by Theorem 1.3 in \cite{Baladi}, to the condition that $1$ is a simple eigenvalue of $P$ and the modulus of every other eigenvalue is strictly less than $1$.

An irreducible subshift of finite type admits a unique shift-invariant Borel probability measure $\mu$ of maximal entropy, known as the Parry measure. This measure is a Markov measure $\mu_{P,p}$ for a certain matrix $P$ and distribution $p$, which are constructed from the transition matrix $A$ (see \cite[Example 2, page 15]{Baladi}). 


\vspace{0.2cm}
\textbf{Regular subshifts.}
We say that a transition matrix $A$, and the associated subshift $X_A$, is \textit{$d$-regular} if each row and each column of $A$ sums to $d$; that is, the associated graph $G_A$ is $d$-regular. In this case, the spectral radius of $A$ is equal to $\rho(A)=d$, and the subshift $X_A$ has topological entropy $h(X_A)=\log d$. The normalized matrix $\tfrac{1}{d}A$ is stochastic with uniform stationary distribution, and the corresponding Markov measure $\mu$ is the measure of maximal entropy.

The dynamical system $(X_A,\sigma,\mu)$ is strongly mixing if and only if $d$ is a simple eigenvalue of $A$ and  $\lambda(A)<d$, where $\lambda(A)$ denotes the second-largest eigenvalue of $A$ in modulus; equivalently, $\lambda(A)$ is the spectral radius of the matrix $A-\frac{d}{m}J$, where $m=|V|$ and $J$ is the $m\times m$ matrix with all entries equal to $1$. Moreover, exponential decay of correlations holds (see the proof of Proposition~1.1 in \cite{Baladi}): for any locally constant functions $f,g:X_A\rightarrow\mathbb{C}$ there exists a constant $C>0$ such that
\begin{equation}\label{eqn:subshift_exp_decay}
\left|\int_{X} (f\circ\sigma^n)gd\mu - \int_{X}fd\mu\int_{X}gd\mu\right|\leq C\|\tfrac{1}{d^n}A^n-\tfrac{1}{m}J\|_{\infty} \ \mbox{ for all $n\in\mathbb{N}$},
\end{equation}
where the $\infty$- norm of a matrix is defined as the maximum  absolute value  of its entries.
Note that $\tfrac{1}{d^n}A^n-\tfrac{1}{m}J=(\tfrac{1}{d}A-\tfrac{1}{m}J)^n$, and if $\lambda(A)>0$, there exists a constant $C>0$ such that
\[
\|\tfrac{1}{d^n}A^n-\tfrac{1}{m}J\|_{\infty}\leq Cn^{r-1}\lambda(A)^n/d^n \ \mbox{ for all $n\in\mathbb{N}$},
\]
where $r$ is the size of the largest Jordan block of $A$ corresponding to an eigenvalue of modulus $\lambda(A)$.
If $A$ is symmetric, then $\|A-\tfrac{d}{m}J\|_{\infty}=\lambda(A)$ and $\|\tfrac{1}{d^n}A^n-\tfrac{1}{m}J\|_{\infty}=\lambda(A)^n/d^n$. In particular, non-bipartite $d$-regular Ramanujan graphs produce $d$-regular subshifts with mixing rate $\theta\leq 2\sqrt{d-1}/d$.

\vspace{0.2cm}
\textbf{Non-backtracking edge subshifts.}
Let $G=(V,E)$ be a finite, undirected, $(d+1)$-regular graph, possibly with loops\footnote{A loop contributes two to the degree of a vertex.} and multiple edges. Construct the directed graph $\overline{G}=(V,\overline{E})$, where every edge $\{a,b\}$ of $G$ produces two directed mutually inverse edges $(a,b)$ and $(b,a)$ in $\overline{G}$. Every vertex of $\overline{G}$ has $d+1$ outgoing edges and $d+1$ incoming edges. The \textit{non-backtracking edge subshift} associated with the graph $G$ is defined as
\[
E_G=\{ (e_n)_{n\in\mathbb{Z}}\in \overline{E} : t(e_n)=o(e_{n+1}) \mbox{ and } e_{n+1}\neq e_n^{-1} \mbox{ for all $n\in\mathbb{Z}$} \},
\]
where $t(e)$ and $o(e)$ are the terminal and origin vertices of the edge $e$. The subshift $E_G$ is $d$-regular. Its transition matrix is known as the \textit{non-backtracking matrix} or \textit{Bass-Hashimoto matrix} $H$ of the graph $G$. The matrix $H$ has dimension $(d+1)m\times (d+1)m$, where $m$ is the number of vertices of $G$, and
\[
H_{e,f}=\left\{
          \begin{array}{ll}
            1, & \hbox{if $t(e)=o(f)$ and $f\neq e^{-1}$;} \\
            0, & \hbox{otherwise.}
          \end{array}
        \right., \quad e,f\in \overline{E}.
\]
Note that the matrix $H$ is not necessarily symmetric or even normal, and its eigenvalues may be complex. The eigenvalues of $H$ can be computed from the eigenvalues of $G$ by the Bass-Ihara formula:
\[
spec(H)=\left\{ \frac{\lambda\pm \sqrt{\lambda^2-4d}}{2} : \lambda\in spec(G) \right\} \cup \{\pm 1\}.
\]
In particular, if $G$ is not a Ramanujan graph, then $\lambda(H)>\sqrt{d}$. On the other hand, if $G$ is Ramanujan, then all eigenvalues of $H$ satisfy $\lambda=\pm 1, \pm d$ or $|\lambda|=\sqrt{d}$, and therefore $\lambda(H)=\sqrt{d}$ if $G$ is non-bipartite. A well-known consequence is that the Ihara zeta function of a connected $(d+1)$-regular graph $G$ satisfies the analog of the Riemann hypothesis (that is, the poles in the region $0<Re(z)<1$ lie on the line $Re(z)=1/2$) if and only if $G$ is a Ramanujan graph (see \cite{Terras2011} for more details).

The speed of convergence in (\ref{eqn:subshift_exp_decay}) for the subshift $(E_G,\sigma,\mu)$ depends on the size of the largest Jordan block of $H$ associated with an eigenvalue of modulus $\lambda(H)$. The spectral decomposition of $H$ was described in \cite[Prop. 3.1]{LubetzkyPeres2016}, which can be formulated as follows in the case of Ramanujan graphs.

\begin{proposition}\label{prop:BassHashimoto}
Let $G$ be a $(d+1)$-regular Ramanujan graph. The non-backtracking matrix $H$ of the graph $G$
is unitary equivalent to a block-diagonal matrix with blocks of sizes $1$ and $2$, where the $1$-blocks consist of a single value $d$, value $-d$ if $G$ is bipartite, and multiple $\pm 1$, and the $2$-blocks are of the form
\begin{equation}\label{eqn:2-block}
\left(
  \begin{array}{cc}
    \alpha & \beta \\
    0 & \overline{\alpha} \\
  \end{array}
\right),
\end{equation}
where $|\alpha|=|\overline{\alpha}|=\sqrt{d}$ and $|\beta|=d-1$, here $\alpha,\overline{\alpha}\in\mathbb{C}$ are the roots of $x^2-\lambda x+d$, where $\lambda$ is a nontrivial eigenvalue of $G$, $|\lambda|\leq 2\sqrt{d}$.
\end{proposition}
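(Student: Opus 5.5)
We follow the approach of Lubetzky--Peres: build an explicit orthonormal basis of $\mathbb{C}^{\overline{E}}$ adapted to $H$, starting from an orthonormal eigenbasis $\{\phi_j\}$ of the adjacency matrix of $G$ with eigenvalues $\lambda_j$.

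\textbf{The two-dimensional blocks.} For each vertex function $\phi$ define two functions on directed edges,
\[
(S\phi)(e)=\phi(o(e)), \qquad (T\phi)(e)=\phi(t(e)).
\]
Directly from the definition of $H$ one checks
\[
H(T\phi)=A\phi\circ t - S\phi\ \text{(suitably indexed)}, \qquad H(S\phi)=T\phi .
\]
(The exact formula depends on whether $H$ acts on the left or right; we fix the convention so that $(H\psi)(e)=\sum_{f:\,t(e)=o(f),\,f\neq e^{-1}}\psi(f)$.) Hence for an eigenvector $\phi_j$ the space $W_j=\mathrm{span}\{S\phi_j,T\phi_j\}$ is $H$-invariant. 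In the basis $(S\phi_j,T\phi_j)$, $H$ acts by a $2\times2$ matrix whose characteristic polynomial is $x^2-\lambda_j x+d$, with roots $\alpha_j,\overline{\alpha}_j$.

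Inner products are easy to compute: $\|S\phi\|^2=\|T\phi\|^2=(d+1)\|\phi\|^2$ and $\langle S\phi,T\phi\rangle=\lambda\|\phi\|^2$. Moreover $\langle S\phi_i, S\phi_j\rangle=\langle S\phi_i,T\phi_j\rangle=\langle T\phi_i,T\phi_j\rangle=0$ for $i\neq j$, by orthogonality of the $\phi_i$ and the symmetry of $A$. So the $W_j$ are mutually orthogonal.

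Inside each $W_j$ with $|\lambda_j|<d+1$ (so $\dim W_j=2$), we apply Gram--Schmidt starting from an eigenvector of $H|_{W_j}$ for $\alpha_j$. Schur triangularization of a $2\times 2$ block in an orthonormal basis gives the upper-triangular form (\ref{eqn:2-block}) with diagonal entries $\alpha_j,\overline{\alpha}_j$. The Ramanujan bound $|\lambda_j|\le 2\sqrt d$ makes the roots complex conjugate (or a double real root) with $|\alpha_j|=\sqrt d$.

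The off-diagonal modulus follows from the Frobenius norm, which is unitarily invariant:
\[
|\beta|^2=\|H|_{W_j}\|_F^2-2d .
\]
We compute $\|H|_{W_j}\|_F^2$ using the Gram matrix above. Since $H$ maps the orthonormal frame into $W_j$, one can use $\operatorname{tr}(H^*H|_{W_j})$. Noting $H^*H=d^2 I$ plus corrections coming from the reversal involution, we expect $\|H|_{W_j}\|_F^2=d^2+1$. This gives $|\beta|^2=d^2-2d+1$, i.e.\ $|\beta|=d-1$. This trace computation, i.e.\ getting the exact identity for $H^*H$ restricted to $W_j$, is the step I expect to be the main technical obstacle. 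I would handle it by writing $H=T^{*}\!\!\text{-type}\ \text{composition} - J$, where $J$ is the edge-reversal involution, and expanding.

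\textbf{The trivial eigenvalues.} For $\lambda=d+1$ (constant $\phi$), $S\phi=T\phi$, so $W$ is one-dimensional with eigenvalue $d$. If $G$ is bipartite, the eigenvalue $-(d+1)$ gives $S\phi=-T\phi$ and eigenvalue $-d$.

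\textbf{The orthogonal complement.} The complement $W^\perp$ of $\bigoplus_j W_j$ consists of edge functions $\psi$ with $\sum_{o(e)=v}\psi(e)=0=\sum_{t(e)=v}\psi(e)$ for all $v$. On this space the formula for $H$ reduces to $H\psi=-J\psi$, up to the convention for $H$. $W^\perp$ is $J$-invariant, and $J$ is a self-adjoint unitary involution. So $H|_{W^\perp}$ is diagonalizable in an orthonormal basis with eigenvalues $\pm1$; these are the $1$-blocks $\pm1$.

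Assembling the orthonormal bases of all $W_j$ and of $W^\perp$ gives the unitary equivalence claimed in Proposition~\ref{prop:BassHashimoto}. A dimension count, $2m-(1\text{ or }2)$ for $\bigoplus_j W_j$ plus the remainder, matches the $\pm1$ multiplicities in the Bass--Ihara formula.
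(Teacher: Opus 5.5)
The paper gives no proof of its own here. It quotes \cite[Prop.~3.1]{LubetzkyPeres2016}, and your outline is a reconstruction of that argument: lift an orthonormal real eigenbasis of $A$ to the spaces $W_j=\mathrm{span}\{S\phi_j,T\phi_j\}$, Schur-triangularize each $W_j$, and show $H=-J$ on $(\operatorname{Im}S+\operatorname{Im}T)^\perp$. So the route is right. As written, however, it contains one wrong formula and leaves the only non-routine step unproved.

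\textbf{The wrong formula.} With your convention $H=TS^*-J$, you get $H(S\phi)=d\,T\phi$, not $T\phi$, and $H(T\phi)=\lambda T\phi-S\phi$. In the basis $(S\phi,T\phi)$ the restriction is therefore $\begin{pmatrix}0&-1\\ d&\lambda\end{pmatrix}$, whose characteristic polynomial is $x^2-\lambda x+d$. Your formula would instead give $x^2-\lambda x+1$ and eigenvalue $1$ for the constant function, contradicting what you state next.

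\textbf{The unproved step.} The value $|\beta|=d-1$ is only ``expected'', and the heuristic ``$H^*H=d^2I$ plus corrections'' is wrong. The exact identity follows from $T^*T=(d+1)I$, $T^*J=S^*$ and $JT=S$: one gets $H^*H=(ST^*-J)(TS^*-J)=I+(d-1)SS^*$. Take an orthonormal basis $u_1,u_2$ of the $H$-invariant space $W_j$. Then $\|H|_{W_j}\|_F^2=\sum_k\|Hu_k\|^2=2+(d-1)\sum_k\|S^*u_k\|^2$. Since $S^*S=(d+1)I$ and $S^*T=A$, for a real unit $\phi_j$ one has $S^*u=\langle u,S\phi_j\rangle\phi_j$ for all $u\in W_j$ (check it on $S\phi_j$ and $T\phi_j$). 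Hence $\sum_k\|S^*u_k\|^2=\|S\phi_j\|^2=d+1$, and so $\|H|_{W_j}\|_F^2=d^2+1$ for every nontrivial $\lambda_j$. The Ramanujan bound enters only through $|\alpha|^2+|\overline{\alpha}|^2=2d$, which gives $|\beta|^2=(d-1)^2$.

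With these two repairs your argument is complete.
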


This implies the following characterization of the Ramanujan property in terms of quantitative mixing:

\begin{corollary}
Let $G$ be a $(d+1)$-regular graph and $(E_G,\sigma,\mu)$ be the associated $d$-regular non-backtracking edge subshift. Then $(E_G,\sigma,\mu)$ is strongly mixing, and for any locally constant functions $f,g:E_G\rightarrow\mathbb{C}$, there exists a constant $C>0$ such that
\[
\left|\int_E (f\circ\sigma^n)gd\mu - \int_Efd\mu\int_Egd\mu\right|\leq Cn\left(\tfrac{1}{\sqrt{d}}\right)^n \mbox{ for all $n\in\mathbb{N}$,}
\]
if and only if $G$ is a non-bipartite Ramanujan graph.
\end{corollary}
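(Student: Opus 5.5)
We prove both directions by combining the estimate (\ref{eqn:subshift_exp_decay}) with Proposition~\ref{prop:BassHashimoto} and the Bass--Ihara formula. Throughout, $A=H$ is the non-backtracking matrix of size $m'=(d+1)m$, and $\mu$ is the uniform Markov measure for $\tfrac1d H$.

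\emph{Sufficiency.} Suppose $G$ is a non-bipartite Ramanujan graph. By Proposition~\ref{prop:BassHashimoto} there is a unitary $U$ with $H=UBU^{*}$, where $B$ is block diagonal. Its blocks are: one $1$-block $d$, several $1$-blocks $\pm1$, and $2$-blocks of the form (\ref{eqn:2-block}). Since $G$ is non-bipartite, the block $-d$ does not appear.

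The eigenvector for $d$ is the constant vector, since $H$ is $d$-regular in rows and columns. Hence $\tfrac1d H-\tfrac1{m'}J$ is unitarily equivalent to $\tfrac1d B$ with the block $d$ replaced by $0$. Its $n$-th power is therefore block diagonal with:
\begin{itemize}
\item entries $(\pm1/d)^n$ on the $\pm1$ blocks;
\item $2\times2$ blocks $d^{-n}\begin{pmatrix}\alpha^n & \beta\sum_{k=0}^{n-1}\alpha^k\overline{\alpha}^{\,n-1-k}\\ 0&\overline{\alpha}^n\end{pmatrix}$.
\end{itemize}
Using $|\alpha|=|\overline\alpha|=\sqrt d$ and $|\beta|=d-1$, the off-diagonal entry is bounded by $n(d-1)d^{(n-1)/2}$. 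Every entry of such a block is thus $O(n\, d^{-n/2})$.

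Conjugating back by the fixed unitary $U$ changes the $\infty$-norm by at most a factor depending only on $m'$. This gives $\|\tfrac1{d^n}H^n-\tfrac1{m'}J\|_\infty\le C' n d^{-n/2}$. Combined with (\ref{eqn:subshift_exp_decay}), this yields the claimed bound. Strong mixing follows because $d$ is then a simple eigenvalue (since $G$ is connected) and all other eigenvalues have modulus $<d$; here we use $d\ge2$, which gives $\sqrt d<d$ and $1<d$.

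\emph{Necessity.} Assume strong mixing and the bound $Cn\,d^{-n/2}$ hold for all locally constant $f,g$. Take $f=\mathbb 1_{[e]}$ and $g=\mathbb 1_{[e']}$, the cylinders on single edges. Then
\[
\int (f\circ\sigma^n)g\,d\mu=\tfrac1{m'}\big((\tfrac1dH)^n\big)_{e',e},
\]
up to the orientation convention, and the product of integrals is $\tfrac1{m'^2}$. The hypothesis therefore gives $\|(\tfrac1dH-\tfrac1{m'}J)^n\|_\infty\le C'' n d^{-n/2}$ for all $n$. Consequently the spectral radius of $\tfrac1dH-\tfrac1{m'}J$ is at most $d^{-1/2}$, so $\lambda(H)\le\sqrt d$.

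Strong mixing gives that $d$ is simple and $\lambda(H)<d$. This excludes $-d\in spec(H)$, hence excludes $-(d+1)\in spec(G)$, since by Bass--Ihara such an eigenvalue produces $-d$. So $G$ is non-bipartite. It also excludes disconnectedness, since multiple eigenvalues $d+1$ of $G$ produce a multiple eigenvalue $d$ of $H$.

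Finally, let $\lambda\ne\pm(d+1)$ be an eigenvalue of $G$ with $|\lambda|>2\sqrt d$. Then $\lambda$ is real, and the roots of $x^2-\lambda x+d$ are real with product $d$ and distinct moduli. One of them therefore has modulus $>\sqrt d$, contradicting $\lambda(H)\le\sqrt d$. Hence $G$ is Ramanujan.

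\emph{Main obstacle.} The delicate points are bookkeeping:
\begin{itemize}
\item confirming that the Perron vector of $H$ is the constant vector, so that the subtraction of $\tfrac1{m'}J$ exactly kills the $d$-block in the unitary picture;
\item matching the orientation and normalization in the correlation of cylinder indicators.
\end{itemize}
I expect both to be routine checks.
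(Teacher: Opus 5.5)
Your proposal is correct and follows the paper's argument, which the paper only states implicitly. Sufficiency combines the estimate (\ref{eqn:subshift_exp_decay}) with the block decomposition of Proposition~\ref{prop:BassHashimoto}, whose $2$-blocks produce the factor $n\,d^{-n/2}$; necessity uses the Bass--Ihara formula, which forces an eigenvalue of $H$ of modulus $>\sqrt{d}$, the eigenvalue $-d$, or a repeated eigenvalue $d$ whenever $G$ is not a connected non-bipartite Ramanujan graph, and your argument via cylinder indicators and the spectral radius formula spells out this lower-bound direction that the paper leaves to the reader.
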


Note that the factor $n$ cannot be removed, since the matrix $H$ for a Ramanujan graph always possesses a $2$-block of the form (\ref{eqn:2-block}). At the same time, a better mixing rate may occur for $d$-regular subshifts $X_A$, since $\lambda(A)$ could be smaller than $\sqrt{d}$. Motivated by this discussion, we propose the following definition of a Ranamujan subshift.

\begin{definition}
We call a $d$-regular subshift $X$ \textit{Ramanujan}, if there exists $r\geq 0$ with the following property: for any locally constant functions $f,g:X\rightarrow\mathbb{C}$, there exists a constant $C>0$ such that
\[
\left|\int_X (f\circ\sigma^n)gd\mu - \int_Xfd\mu\int_Xgd\mu\right|\leq Cn^r\left(\tfrac{1}{\sqrt{d}}\right)^n \mbox{ for all $n\in\mathbb{N}$.}
\]
\end{definition}

The non-backtracking edge subshift associated to a $(d+1)$-regular non-bipartite Ramanujan graph is a $d$-regular Ramanujan subshift and the above inequality holds with $r=1$. We do not know whether, for a fixed $d\geq 2$, there exist infinitely many $d$-regular Ramanujan subshifts satisfying the inequality with $r=0$.

Our definition agrees with the notion of (aperiodic) directed Ramanujan graphs and almost-normal families of directed graphs developed in \cite{Parzanchevski2020}.

\begin{definition}
A square matrix is called \textit{$r$-normal} if it is unitary equivalent to a block-diagonal
matrix with blocks of size at most $r\times r$. A graph is said to be \textit{$r$-normal} if its
adjacency matrix is $r$-normal.
\end{definition}

\begin{definition}
A directed $d$-regular graph $G$ is called \textit{Ramanujan} if $\lambda(G)\leq \sqrt{d}$. A family of directed $d$-regular graphs $(G_n)_{n\geq 1}$ is called \textit{Ramanujan} if each $G_n$ is Ramanujan and $r$-normal for some fixed $r$.

Let $\lambda<1$ and $r\in\mathbb{N}$.
A family of directed $d$-regular graphs $(G_n)_{n\geq 1}$ is called a
\textit{$(\lambda,r)$-expander family} if $\lambda(G_n)/d\leq\lambda$ and $G_n$ is $r$-normal for all $n\geq 1$.
\end{definition}

\begin{proposition}
A directed $d$-regular graph $G$ is Ramanujan if and only if the associated vertex subshift $X_G$ is Ramanujan.
\end{proposition}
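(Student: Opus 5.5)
The plan is to reduce both directions to the matrix $B=\tfrac1d A-\tfrac1m J$, where $A$ is the adjacency (transition) matrix of $G$, $m=|V|$, and $\mu$ is the uniform Markov measure. Since $G$ is $d$-regular, $AJ=JA=dJ$ and $J^2=mJ$. Hence $B^n=\tfrac1{d^n}A^n-\tfrac1m J$ for all $n\geq1$, and $\lambda(A)=d\,\rho(B)$ by definition of $\lambda(A)$. So the statement becomes: $\rho(B)\leq d^{-1/2}$ if and only if correlations of locally constant functions decay like $Cn^r d^{-n/2}$.

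\emph{($\Rightarrow$)} Assume $\lambda(A)\leq\sqrt d$. I would invoke the estimate (\ref{eqn:subshift_exp_decay}): for locally constant $f,g$ the correlation is bounded by $C\|B^n\|_\infty$.
\begin{itemize}
\item If $\lambda(A)>0$, the Jordan form bound stated after (\ref{eqn:subshift_exp_decay}) gives $\|B^n\|_\infty\leq C n^{r-1}\lambda(A)^n/d^n\leq Cn^{r-1}d^{-n/2}$. Here $r$ is the size of the largest relevant Jordan block, so it is at most $m$, which is a fixed exponent.
\item If $\lambda(A)=0$, then $B$ is nilpotent, $B^n=0$ for $n\geq m$, and the bound is trivial.
\end{itemize}
For $d\geq2$ we have $\sqrt d<d$, so $d$ is a simple eigenvalue with a spectral gap and the system is strongly mixing. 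The degenerate case $d=1$ only needs a remark: there $\sqrt d=d$ and the bound is vacuous.

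\emph{($\Leftarrow$)} Assume the Ramanujan correlation bound holds for some $r$. For vertices $u,v$, let $f=\mathbb 1_{[u]_0}$ and $g=\mathbb 1_{[v]_0}$ be the indicators of the cylinders fixing the $0$-th symbol. These are locally constant. By the Markov property with the uniform stationary vector,
\[
\int (f\circ\sigma^n)g\,d\mu=\mu\bigl(x_0=v,\ x_n=u\bigr)=\tfrac1m\,\tfrac{(A^n)_{v,u}}{d^n},\qquad \int f\,d\mu\int g\,d\mu=\tfrac1{m^2}.
\]
So the correlation equals $\tfrac1m (B^n)_{v,u}$. Applying the hypothesis to each of the finitely many pairs $(u,v)$ and taking the largest constant gives $\|B^n\|_\infty\leq C' n^r d^{-n/2}$ for all $n$. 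Since all matrix norms are equivalent, Gelfand's formula yields
\[
\rho(B)=\lim_n\|B^n\|^{1/n}\leq d^{-1/2},
\]
that is, $\lambda(A)\leq\sqrt d$.

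The argument is essentially bookkeeping. The main point requiring care is the forward direction, where the polynomial factor must be controlled by a uniform exponent $r$ (the Jordan block size, at most $m$). Definitionally, the problem is that strong mixing is built into the Ramanujan subshift notion while directed Ramanujan graphs only require $\lambda(G)\leq\sqrt d$. This is resolved by the observation that $\sqrt d<d$ for $d\geq2$.
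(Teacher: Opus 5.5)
Your proof is correct and follows the argument the paper intends. The paper states this proposition without a written proof: its forward direction rests on the estimate (\ref{eqn:subshift_exp_decay}) together with the Jordan-block bound for $\tfrac{1}{d^n}A^n-\tfrac1m J=(\tfrac1d A-\tfrac1m J)^n$, exactly as in your argument, and your converse (one-symbol cylinders, whose correlations are the entries of $\tfrac1m B^n$, plus Gelfand's formula) is the natural completion, with your separate handling of the nilpotent case $\lambda(A)=0$, which the paper's Jordan bound excludes, a worthwhile extra detail.
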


In \cite[Theorem~13.4.3]{Parzanchevski2020}, an analog of the Alon-Boppana theorem was proved: for every $\varepsilon>0$, $d\geq 2$, and $r\geq 1$, there is no infinite $(\tfrac{1}{\sqrt{d}}-\varepsilon,r)$-expander family of directed $d$-regular graphs. It remains an open problem whether, for every degree $d\geq 2$, there exists an infinite Ramanujan family of directed $d$-regular graphs.

\section{Two-dimensional regular subshifts}

In this section, we introduce regular $\mathbb{Z}^2$-subshifts and study their mixing properties.

Let $W$ be an alphabet,  and let $W^{\mathbb{Z}^2}$ denote the full shift space consisting of all configurations $x:\mathbb{Z}^2\rightarrow W$. The group $\mathbb{Z}^2$ naturally acts on $W^{\mathbb{Z}^2}$ by shifts, defined for each $a\in\mathbb{Z}^2$ by $(\sigma^a(x))(b)=x(a+b)$, $b\in\mathbb{Z}^2$.
A \textit{two-dimensional subshift} or just $\mathbb{Z}^2$-subshift is a closed subset $X\subset W^{\mathbb{Z}^2}$ invariant under the $\mathbb{Z}^2$-action, i.e., $\sigma^a(x)\in X$ for all $x\in X$ and $a\in\mathbb{Z}^2$.

A \textit{shape} is a finite subset $F$ of $\mathbb{Z}^2$, and a \textit{pattern} of shape $F$ is a function $p:F\rightarrow W$. For a configuration $x\in W^{\mathbb{Z}^2}$, the restriction of $x$ to $F$ is denoted by $x|_F$. A pattern $p$ of shape $F$ occurs in a subshift $X$ if there exists $x\in X$ such that $x|_F=p$.
A $\mathbb{Z}^2$-subshift $X$ is said to be \textit{of finite type} if there exists a finite set $P$ of allowed patterns of shape $F$ such that
\[
X=X_P=\{ x\in W^{\mathbb{Z}^2} : \sigma^a(x)|_F\in P \mbox{ for all $a\in\mathbb{Z}^2$}\}.
\]
It is sufficient to consider square shapes. 

\vspace{0.2cm}
\textbf{Regular subshifts.} For one-dimensional $d$-regular subshifts, every finite pattern (walk) admits exactly $d$ possible extensions to the left and to the right. We extend this notion to the two-dimensional setting by considering $\mathbb{Z}^2$-subshifts in which the same property holds for both direction, horizontal and vertical.

A \textit{$(m,n)$-rectangular shape} is a subset of the form $[1,m]\times[1,n]$. It has four natural extensions --- to the right, left, up, and down --- obtained by adding a column or a row to the corresponding side.

\begin{definition}
We call a $\mathbb{Z}^2$-subshift of finite type $X$ \textit{$d$-regular} if
every rectangular pattern that occurs in $X$ has exactly $d$ distinct extensions in each of the four directions  to a larger rectangular pattern that occurs in $X$. That is, there are exactly $d$ ways to extend
a rectangular pattern in $X$ by one column to the right/left and by one row upward/downward to a pattern in $X$.
\end{definition}

Let $X$ be a $d$-regular $\mathbb{Z}^2$-subshift, and let $X(m,n)$ denote the number of patterns of shape $m\times n$ that occur in $X$. Then $X(m,n)=dX(m,n-1)=dX(m-1,n)$ and $X(m,n)=sd^{m-1}d^{n-1}$, where $s$ is the number of symbols from $W$ that occur in $X$. Therefore, every $d$-regular $\mathbb{Z}^2$-subshift has zero entropy:
\[
h(X)=\lim_{n\rightarrow\infty} \frac{1}{n^2} \log X(n,n)=0.
\]

Let $X$ be a $\mathbb{Z}^2$-subshift of finite type. We define directed graphs $H_n(X)$ and $V_n(X)$ to represent the allowed horizontal and vertical transitions of patterns in $X$. The vertices of $H_n(X)$ are all patterns of shape $(1,n)$ that occur in $X$, and there is a directed edge $p_1\rightarrow p_2$ if the patten obtained by placing $p_2$ to the right of $p_1$, forming a $(2,n)$-rectangle, occurs in $X$. Similarly, the graph
$V_n(X)$ is defined using patterns of shape $(n,1)$, with edges corresponding to valid vertical extensions. Note that the graphs $H_n(X)$ and $V_n(X)$ may contain loops but do not contain multiple edges.
Then the subshift $X$ is $d$-regular if and only if the graphs $H_n(X)$ and $V_n(X)$ are $d$-regular for all $n\in\mathbb{N}$.

The \textit{block representation} of a $\mathbb{Z}^2$-subshift $X$ with respect to a shape $F$ is the map
\[
\pi_F: W^{\mathbb{Z}^2}\rightarrow (W^F)^{\mathbb{Z}^2}, \quad \pi_F(x)(a)= \sigma^a(x)|_F, \ a\in\mathbb{Z}^2.
\]
The image $\pi_F(X)$ is a $\mathbb{Z}^2$-subshift over the alphabet $W^{F}$ and is topologically conjugate to $X$. Moreover, if $X$ is $d$-regular, then so is $\pi_F(X)$ for every rectangular shape $F$. Just as every one-dimensional subshift of finite type can be realized as the vertex subshift of a directed graph, every $\mathbb{Z}^2$-subshift of finite type admits a block representation that is a matrix subshift.

\vspace{0.2cm}
\textbf{Matrix subshifts.}
Let $A$ and $B$ be two transition matrices over the alphabet $W$. The \textit{matrix subshift} associated with $A$ and $B$ is defined as the subshift
\[
X(A,B)=\{ x\in W^{\mathbb{Z}^2} :  A(x_{n,m},x_{n+1,m})=1 \mbox{ and } B(x_{n,m},x_{n,m+1})=1  \mbox{ for all $n,m\in\mathbb{Z}$} \}.
\]
The matrix $A$ determines the allowed horizontal transitions, and $B$ the vertical ones. A pattern $p:F\rightarrow W$ is \textit{admissible} if all its horizontal and vertical nearest-neighbor transitions are allowed by the matrices $A$ and $B$, respectively.

Every $d$-regular subshift admits a block representation that is a matrix subshift $X(A,B)$ for some $d$-regular matrices $A$ and $B$. However, the converse does not necessary hold: a pair of $d$-regular matrices may define a matrix subshift $X(A,B)$ that is even empty. A nice characterization is possible for extendable subshifts.

\begin{definition}
A matrix subshift is called \textit{extendable} if every admissible pair of transitions along adjacent edges of a $(2,2)$-shape can be completed to an admissible $(2,2)$-pattern; in other words, if any pattern of the shapes:
\begin{center}
\begin{tikzpicture}[scale=0.8, every node/.style={font=\small},
  arrowmid/.style={
    postaction={decorate},
    decoration={markings, mark=at position 0.6 with {\arrow{stealth}}}
  }
]

\draw (0,0) rectangle ++(1,1); 
\node at (0.5,0.5) {$a$};
\draw (1,0) rectangle ++(1,1); 
\node at (1.5,0.5) {$b$};
\draw (0,1) rectangle ++(1,1); 
\node at (0.5,1.5) {$c$};

\end{tikzpicture}\qquad
\begin{tikzpicture}[scale=0.8, rotate=90, every node/.style={font=\small},
  arrowmid/.style={
    postaction={decorate},
    decoration={markings, mark=at position 0.6 with {\arrow{stealth}}}
  }
]

\draw (0,0) rectangle ++(1,1); 
\node at (0.5,0.5) {$b$};
\draw (1,0) rectangle ++(1,1); 
\node at (1.5,0.5) {$d$};
\draw (0,1) rectangle ++(1,1); 
\node at (0.5,1.5) {$a$};

\end{tikzpicture} \qquad
\begin{tikzpicture}[scale=0.8, rotate=180, every node/.style={font=\small},
  arrowmid/.style={
    postaction={decorate},
    decoration={markings, mark=at position 0.6 with {\arrow{stealth}}}
  }
]

\draw (0,0) rectangle ++(1,1); 
\node at (0.5,0.5) {$d$};
\draw (1,0) rectangle ++(1,1); 
\node at (1.5,0.5) {$c$};
\draw (0,1) rectangle ++(1,1); 
\node at (0.5,1.5) {$b$};

\end{tikzpicture} \qquad
\begin{tikzpicture}[scale=0.8, rotate=270, every node/.style={font=\small},
  arrowmid/.style={
    postaction={decorate},
    decoration={markings, mark=at position 0.6 with {\arrow{stealth}}}
  }
]

\draw (0,0) rectangle ++(1,1); 
\node at (0.5,0.5) {$c$};
\draw (1,0) rectangle ++(1,1); 
\node at (1.5,0.5) {$a$};
\draw (0,1) rectangle ++(1,1); 
\node at (0.5,1.5) {$d$};

\end{tikzpicture}
\end{center}
can be completed to an admissible $(2,2)$-pattern.
\end{definition}

Extendability was part of the original definition of finite-type matrix subshifts in~\cite{MarkleyPaul1981}. It insures that every admissible pattern actually occurs in the subshift. This property admits a simple characterization in terms of defining matrices.

\begin{definition}
Two transition matrices $A$ and $B$ \textit{commute positively} when $(AB)_{ij}>0$ if and only if $(BA)_{ij}>0$ for all $i,j$. The matrices $A$ and $B$ are \textit{consistent} if $A$ commutes positively with both $B$ and $B^\top$.
\end{definition}

\begin{proposition}
A matrix subshift $X=X(A,B)$ is extendable if and only if the matrices $A$ and $B$ are consistent.
\end{proposition}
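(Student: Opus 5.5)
The plan is to translate each of the four L-shaped completion conditions into a statement about the supports of matrix products, and then to check that the four conditions together say exactly that $A$ commutes positively with $B$ and with $B^\top$. Fix coordinates for a $(2,2)$-pattern: bottom-left $w_{00}$, bottom-right $w_{10}$, top-left $w_{01}$, top-right $w_{11}$. Admissibility means
\[
A(w_{00},w_{10})=A(w_{01},w_{11})=B(w_{00},w_{01})=B(w_{10},w_{11})=1 .
\]

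I would take the four L-shapes one at a time, each obtained by omitting one corner.

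\emph{Missing top-right.} We are given $w_{00}=a$, $w_{10}=b$, $w_{01}=c$ with $A(a,b)=1$ and $B(a,c)=1$. We need some $w$ with $A(c,w)=1$ and $B(b,w)=1$, that is, $(A^\top B)_{ac}>0$ is forced by a path $c\to w \leftarrow b$. Equivalently, $(B^\top A)_{ba}>0$ together with $A(a,b)B(a,c)=1$ should imply $(B A)_{cw}$-type positivity. Rather than fight the indices here, I would organize the cases by which corner is missing, since the two known edges meeting at the opposite corner encode one of the products $AB$, $BA$, $A^\top B$, $B A^\top$ (up to transposes).

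\emph{Missing bottom-left.} The two given edges form the path $w_{00}\to w_{10}\to w_{11}$, i.e.\ $A$ then $B$. Completion requires the path $w_{00}\to w_{01}\to w_{11}$, i.e.\ $B$ then $A$, for the same endpoints. So "every $i,j$ with $(AB)_{ij}>0$ admit a path" is exactly $(AB)_{ij}>0\Rightarrow(BA)_{ij}>0$.

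\emph{Missing top-right.} Here the given edges are the two paths out of $w_{00}$, with completion $w_{11}$. The symmetric case, missing bottom-left... Cleaner: the diagonal pair $(w_{00},w_{11})$ is linked by $AB$ through $w_{10}$ and by $BA$ through $w_{01}$. The four L-shapes therefore give the following four conditions:
\begin{itemize}
\item missing $w_{01}$: $(AB)_{ij}>0\Rightarrow (BA)_{ij}>0$;
\item missing $w_{10}$: $(BA)_{ij}>0\Rightarrow (AB)_{ij}>0$;
\item missing $w_{11}$: given $w_{01}\leftarrow w_{00}\to w_{10}$, a common out-neighbour is needed, i.e.\ $(B^\top A)_{w_{01}w_{10}}>0\Rightarrow (AB^\top)_{w_{01}w_{10}}>0$;
\item missing $w_{00}$: the converse implication, $(AB^\top)>0\Rightarrow(B^\top A)>0$ entrywise, obtained via the anti-diagonal pair $(w_{01},w_{10})$.
\end{itemize}

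Taken together, these four conditions are exactly positive commutation of $A$ with $B$ and of $A$ with $B^\top$. For the forward direction, I would take indices with positive product entry, build the corresponding L-pattern, complete it using extendability, and read off the other path. For the converse, I would use the positivity of the other product to supply the missing corner; both directions therefore amount to the same dictionary read either way.

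The main obstacle is bookkeeping: matching each of the paper's four pictured L-shapes, with its rotation and with the convention that $A$ acts rightward and $B$ upward, to the correct product and transpose. I would fix the convention carefully once and then verify the anti-diagonal cases using $(B^\top A)_{ij}=\sum_k B(k,i)A(k,j)$.
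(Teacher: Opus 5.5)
Your proof is correct and is essentially the paper's argument. You read each corner-completion condition as an implication between the supports of $AB$ and $BA$ (missing $w_{01}$ or $w_{10}$) or of $B^\top A$ and $AB^\top$ (missing $w_{11}$ or $w_{00}$), and together these four implications are exactly consistency. Your final bulleted dictionary is right, but drop the earlier drafts: the \emph{missing top-right} paragraph uses the wrong product $(A^\top B)_{ac}$, and the \emph{missing bottom-left} paragraph actually describes the missing top-left corner $w_{01}$.
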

\begin{proof}
Extendability to the upper left corner means that whenever $A_{ij}=1$ and $B_{j\ell}=1$, there exists $k$ such that $A_{k\ell} = 1$ and $B_{ik} = 1$. This is equivalent to the condition that $(AB)_{i\ell}>0$ implies  $(BA)_{i\ell}>0$ for all $i, \ell$. Similarly, extendability to the lower right corner yields the reverse implication. Together, these are equivalent to $A$ and $B$ commuting positively. Likewise, extendability in the other two corners (upper right and lower left) corresponds to the same condition for $A$ and $B^\top$.
\end{proof}

\begin{proposition}\label{prop:ext_matrix_subshift_is_regular}
Let $X=X(A,B)$ be an extendable matrix subshift defined by $d$-regular transition matrices $A$ and $B$. The following conditions are equivalent:
\begin{enumerate}
  \item The subshift $X$ is $d$-regular.
  \item The matrices $AB$ and $AB^\top$ have entries in $\{0,1\}$. In particular, $AB = BA$ and $AB^\top = B^\top A$.
  \item The subshift $X$ is uniquely extendable: every admissible pair of transitions along adjacent edges of a $(2,2)$-shape can be uniquely completed to an admissible $(2,2)$-pattern.
  \item The horizontal and vertical transition graphs $H_n(X)$ and $V_n(X)$ form $d$-fold covering families under the natural projection maps from $(n+1)$-patterns to $n$-patterns.
\end{enumerate}
Under these equivalent conditions, the subshift \( X(A, B) \) is homeomorphic to the subspace
\[
D(A,B)=\{ (x_h,x_v)\in X_A\times X_B : x_h(0)=x_v(0)  \}
\]
of the direct product \( X_A \times X_B \), where \( X_A \) and \( X_B \) are the one-dimensional subshifts defined by the transition matrices \( A \) and \( B \), respectively. This homeomorphism is given by
\[
\varphi: X(A, B)\rightarrow X_A \times X_B,  \quad  x \mapsto (x_h, x_v),
\]
where
\[
x_h(n) = x(n, 0), \qquad x_v(n) = x(0, n), \qquad \text{for all } n \in \mathbb{Z}.
\]
\end{proposition}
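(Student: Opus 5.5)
We prove the cycle of implications (2)$\Leftrightarrow$(3), (3)$\Rightarrow$(1), (1)$\Rightarrow$(2), and (3)$\Leftrightarrow$(4). The homeomorphism with $D(A,B)$ is handled at the end. Throughout, extendability is available: by the preceding proposition, $A$ and $B$ are consistent, and every admissible pattern occurs in $X$.

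\emph{(2)$\Leftrightarrow$(3).} For indices $i,\ell$, the entry $(AB)_{i\ell}$ counts the $j$ with $A_{ij}=B_{j\ell}=1$. Dually, $(BA)_{i\ell}$ counts the $k$ with $B_{ik}=A_{k\ell}=1$, that is, the completions of a corner-pair at one of the four corners. Since $A,B$ are $d$-regular, $AB$ and $BA$ each have all row sums equal to $d^2$. Positive commutation says that $AB$ and $BA$ have the same support $S$.

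If all entries of $AB$ are in $\{0,1\}$, then every row of $AB$ has exactly $d^2$ ones on $S$. Then $BA$ has row sum $d^2$ over the same $d^2$ positions with positive integer entries, so $BA$ is also a $0/1$ matrix and $AB=BA$. The same argument applies to $AB^\top$ and $B^\top A$.

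Now, $0/1$ entries in all four products $AB$, $BA$, $AB^\top$, $B^\top A$ say exactly that each corner completion is unique. Together with the existence of completions (extendability), this is (3). Conversely, uniqueness of all completions forces $0/1$ entries.

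\emph{(3)$\Rightarrow$(1).} Take an admissible $(m,n)$-rectangle and extend it by one column to the right. The top-right new cell has exactly $d$ choices, by $d$-regularity of $A$. Each subsequent cell going down is then uniquely determined by the corner rule, since it is the completion of the $(2,2)$ square formed with its left neighbour and the cell above it. The resulting column is admissible, and by extendability it occurs in $X$. The other three directions are symmetric, using $B$, $A^\top$, $B^\top$.

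\emph{(1)$\Rightarrow$(2).} Extend a single symbol $i$ first rightward to $j$ with $A_{ij}=1$, giving a $(2,1)$ pattern, and then upward. By $d$-regularity of $X$ there are exactly $d$ extensions to a $(2,2)$ pattern. On the other hand, the top-left entry $k$ ranges over the $d$ values with $B_{ik}=1$, and for each such $k$ there is at least one top-right completion $\ell$, by extendability. So the number of admissible $(2,2)$ patterns with bottom row $(i,j)$ is at least $d$. It equals $d$ only if each $k$ admits exactly one $\ell$, which is uniqueness of completion at that corner. The other corners follow by the symmetric argument.

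\emph{(3)$\Leftrightarrow$(4).} The covering property for $H_n(X)$ says that each edge $p_1\to p_2$ of $H_n$ lifts uniquely through any lift of $p_1$ in $H_{n+1}$. This is exactly the unique-completion statement stacked along a column, and the same holds for $V_n(X)$. The main bookkeeping obstacle is to match the covering maps (delete the top or bottom row) with the corresponding corner completions. Once that correspondence is set up, the equivalence is immediate.

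\emph{Homeomorphism.} The map $\varphi$ is continuous, and it lands in $D(A,B)$ because $x_h(0)=x(0,0)=x_v(0)$. For injectivity and surjectivity, take $(x_h,x_v)\in D(A,B)$, fill in the quadrants square by square using unique completion, and invoke compactness. Uniqueness of completion at each step gives that $\varphi$ is a bijection. Existence at each step uses extendability, and consistency of the completions across the whole plane follows from the $2\times2$ uniqueness, since the filling is forced cell by cell. A continuous bijection between compact Hausdorff spaces is a homeomorphism.
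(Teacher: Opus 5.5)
Your proposal is correct and follows essentially the paper's route. You translate corner completions into entries of $AB$, $BA$, $AB^\top$, $B^\top A$ via consistency, count the $d$ upward extensions of a $(2,1)$-pattern for (1)$\Rightarrow$(3), fill column by column for (3)$\Rightarrow$(1), match the deleting projections with corners for (3)$\Leftrightarrow$(4), and reconstruct a configuration from its two axes for the homeomorphism. You also supply two details the paper leaves implicit: the row-sum argument giving $AB=BA$ and $AB^\top=B^\top A$, and the compact--Hausdorff argument for $\varphi$. In your (3)$\Leftrightarrow$(4) sketch, however, you should also check bijectivity on incoming edges, which is governed by a second corner for each projection, as the paper notes.
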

\begin{proof}
Since the matrices $A$ and $B$ are consistent, unique extendability is equivalent to the matrices $AB$ and $AB^\top$ having entries in $\{0,1\}$.

Every horizontal (resp. vertical) transition has exactly $d$ compatible vertical (resp. horizontal) transitions, and by the extendability condition, each such pair can be completed to an admissible $(2,2)$-pattern. If the subshift $X$ is $d$-regular, any admissible $(2,1)$-pattern admits exactly $d$ admissible extensions to a $(2,2)$-pattern. This means that each corner extension must be unique.
Conversely, unique extendability together with the $d$-regularity of $A$ and $B$ implies that every rectangular pattern admits exactly $d$ admissible extensions in each direction. This follows by induction on the size of the pattern: at each step, once one of the $d$ possible transitions is chosen for a single symbol, the rest of the extension is uniquely determined.

Let us consider the graphs $V_n=V_n(X)$. The vertices of $V_n$ are admissible $(n,1)$-patterns, and the edges correspond to valid vertical transitions. Define the natural projection
\[
\pi_r: V_{n+1}\rightarrow V_n, \ \pi_r(i_1,i_2,\ldots,i_{n+1})=(i_1,i_2,\ldots,i_{n}),
\]
which removes the rightmost symbol of the pattern. This projection is well-defined and maps edges to edges:
\begin{center}
\begin{tikzpicture}[scale=1, every node/.style={font=\small}]
\foreach \x in {0,...,4} {
    \draw (\x,0) rectangle ++(1,1); 
    \draw (\x,1) rectangle ++(1,1); 
}
\foreach \x in {0,1} {
    \node at (\x+0.5,0.5) {$i_{\the\numexpr\x+1}$}; 
    \node at (\x+0.5,1.5) {$j_{\the\numexpr\x+1}$}; 
}
\node at (3+0.5,0.5) {$i_{n}$}; 
\node at (3+0.5,1.5) {$j_{n}$}; 
\node at (4+0.5,0.5) {$i_{n+1}$}; 
\node at (4+0.5,1.5) {$j_{n+1}$}; 
\node at (2.5,0.5) {$\cdots$};
\node at (2.5,1.5) {$\cdots$};

\node at (6,1) {$\Rightarrow$};

\foreach \x in {7,...,10} {
    \draw (\x,0) rectangle ++(1,1); 
    \draw (\x,1) rectangle ++(1,1); 
}

\node at (7+0.5,0.5) {$i_{1}$}; 
\node at (7+0.5,1.5) {$j_{1}$}; 
\node at (8+0.5,0.5) {$i_{2}$}; 
\node at (8+0.5,1.5) {$j_{2}$}; 
\node at (7+3+0.5,0.5) {$i_{n}$}; 
\node at (7+3+0.5,1.5) {$j_{n}$}; 
\node at (7+2.5,0.5) {$\cdots$};
\node at (7+2.5,1.5) {$\cdots$};
\end{tikzpicture}
\end{center}
The projection $\pi_r$ induces a bijection on the set of directed edges passing from a vertex if and only if the unique extendability condition holds for the upper-right corner (here $j_{n+1}$ should be uniquely defined by $i_{n+1}$ and $j_n$). Similarly, $\pi_r$ is bijective on incoming edges to a vertex if the unique extendability condition holds for the lower-right corner. Analogously, the projection $\pi_l$, which removes the leftmost symbol, is a covering map when unique extendability holds for the upper-left and lower-left corners.  The same characterization applies to the graphs $H_n(X)$.

Finally, the unique extendability condition ensures that every admissible configuration is uniquely determined by its horizontal and vertical traces, i.e., by the sequences of admissible horizontal and vertical transitions along the coordinate axes. Therefore, the map $\varphi$ is bijective and maps rectangular cylinder sets to products of cylinder sets.
\end{proof}

\begin{example}
Consider $2$-regular matrices
\[
A = \begin{bmatrix}
1 & 1 & 0 & 0 \\
0 & 0 & 1 & 1 \\
0 & 0 & 1 & 1 \\
1 & 1 & 0 & 0
\end{bmatrix} \ \mbox{ and } \
B = \begin{bmatrix}
0 & 1 & 0 & 1 \\
0 & 1 & 0 & 1 \\
1 & 0 & 1 & 0 \\
1 & 0 & 1 & 0
\end{bmatrix}.
\]
One can show that the subshift $X(A,B)$ is $2$-regular, but it is not extendable:
\[
AB = \begin{bmatrix}
0 & 2 & 0 & 2 \\
0 & 0 & 2 & 0 \\
0 & 0 & 2 & 0 \\
0 & 2 & 0 & 0
\end{bmatrix},
BA = \begin{bmatrix}
1 & 1 & 1 & 1 \\
1 & 1 & 1 & 1 \\
1 & 1 & 1 & 1 \\
1 & 1 & 1 & 1 
\end{bmatrix},
AB^\top = \begin{bmatrix}
1 & 1 & 1 & 1 \\
1 & 1 & 1 & 1 \\
1 & 1 & 1 & 1 \\
1 & 1 & 1 & 1 
\end{bmatrix},
BA^\top = \begin{bmatrix}
1 & 1 & 1 & 1 \\
1 & 1 & 1 & 1 \\
1 & 1 & 1 & 1 \\
1 & 1 & 1 & 1 
\end{bmatrix}.
\]
The graph $H_n(X)$ and $V_n(X)$ are $2$-regular, strongly connected, and aperiodic for all $n\in\mathbb{N}$.
\end{example}

A geometric interpretation of matrix subshifts is provided by Wang tilings.
A \textit{Wang tileset} $W$ is a finite set of unit squares with colored edges (called tiles). A \textit{tiling} of the plane by $W$ is an assignment $t:\mathbb{Z}^2\rightarrow W$ of a tile from $W$ to each position in $\mathbb{Z}^2$ such that adjacent tiles have matching edge colors. The associated \textit{tiling space} $X_W$ consists of all such admissible tilings.

The Wang tiling space $X_W$ can be represented as a matrix subshift $X(A,B)$, where the transition matrices $A$ and $B$ encode the constraints for matching colors on the edges of tiles. Conversely, a pair of transition matrices $A$ and $B$ over an alphabet $W$ defines a Wang tileset, where each symbol of $W$ becomes a tile, and the edges of the tiles are colored to represent the allowed transitions. The condition that $A$ and $B$ are $d$-regular is equivalent to requirement that for each color $c$ of an edge (left/right or upper/bottom), there are exactly $d$ tiles with the corresponding adjacent side colored by $c$. The unique extendability property of a subshift from Proposition~\ref{prop:ext_matrix_subshift_is_regular} corresponds to the condition that the tileset is \textit{$4$-way deterministic}: the colors of two adjacent sides uniquely define the tile.

\begin{example}
Consider the group $F_2\times F_2$ and its standard presentation
\[
F_2\times F_2=\langle a,b,x,y\,|\, ax=xa, bx=xb, ay=ya, by=yb\rangle.
\]
The tileset $W$ associated to this presentation is shown in Figure~\ref{fig:example:4reg_F2xF2}: there are $16$ Wang tiles, one for each pair of generators, the side colors are $a^{\pm}, b^{\pm}, x^{\pm}, y^{\pm}$. The associated tiling space $X\subset W^{\mathbb{Z}^2}$ is a $4$-regular and extendable subshift. We may consider the subshift $X_0\subset X$ by forbidding patterns of two tiles with opposite consequentive labels $ss^{-}$,$s^{-}s$ for $s\in\{a^{\pm}, b^{\pm}, x^{\pm}, y^{\pm}\}$. Then $X_0$ is a $3$-regular and extendable subshift, its $2$-block representation could be given by two $3$-regular matrices of dimension $16$.
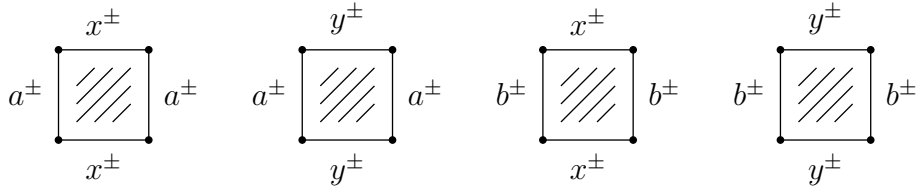
\begin{figure}[t]
\begin{center}
\begin{tikzpicture}[scale=1.2,baseline=-6mm,line width=0.5pt,decoration={
    markings},    ]
 \filldraw[black] (0,0) circle (1pt) (0,-1) circle (1pt) (1,0) circle (1pt) (1,-1) circle (1pt);

 \draw[postaction={decorate}] (0,-1) -- node[left,xshift=-1.5pt,yshift=0.7pt] {$a^{\pm}$} (0,0);
 \draw[postaction={decorate}] (0,0) -- node[above,yshift=1.5pt] {$x^{\pm}$} (1,0);
 \draw[postaction={decorate}] (0,-1) -- node[below,yshift=-1.5pt] {$x^{\pm}$} (1,-1);
 \draw[postaction={decorate}] (1,-1) -- node[right,xshift=1.5pt,yshift=0.7pt] {$a^{\pm}$} (1,0);

 \draw (0.2,-0.4) -- (0.4,-0.2); \draw (0.2,-0.6) -- (0.6,-0.2); \draw (0.2,-0.8) -- (0.8,-0.2); \draw
(0.4,-0.8) -- (0.8,-0.4); \draw (0.6,-0.8) -- (0.8,-0.6);
\end{tikzpicture}\quad
\begin{tikzpicture}[scale=1.2,baseline=-6mm,line width=0.5pt,decoration={
    markings},    ]
 \filldraw[black] (0,0) circle (1pt) (0,-1) circle (1pt) (1,0) circle (1pt) (1,-1) circle (1pt);

 \draw[postaction={decorate}] (0,-1) -- node[left,xshift=-1.5pt,yshift=0.7pt] {$a^{\pm}$} (0,0);
 \draw[postaction={decorate}] (1,0) -- node[above,yshift=1.5pt] {$y^{\pm}$} (0,0);
 \draw[postaction={decorate}] (1,-1) -- node[below,yshift=-1.5pt] {$y^{\pm}$} (0,-1);
 \draw[postaction={decorate}] (1,-1) -- node[right,xshift=1.5pt,yshift=0.7pt] {$a^{\pm}$} (1,0);

 \draw (0.2,-0.4) -- (0.4,-0.2); \draw (0.2,-0.6) -- (0.6,-0.2); \draw (0.2,-0.8) -- (0.8,-0.2); \draw
(0.4,-0.8) -- (0.8,-0.4); \draw (0.6,-0.8) -- (0.8,-0.6);
\end{tikzpicture}\quad
\begin{tikzpicture}[scale=1.2,baseline=-6mm,line width=0.5pt,decoration={
    markings},    ]
 \filldraw[black] (0,0) circle (1pt) (0,-1) circle (1pt) (1,0) circle (1pt) (1,-1) circle (1pt);

 \draw[postaction={decorate}] (0,0) -- node[left,xshift=-1.5pt,yshift=0.7pt] {$b^{\pm}$} (0,-1);
 \draw[postaction={decorate}] (0,0) -- node[above,yshift=1.5pt] {$x^{\pm}$} (1,0);
 \draw[postaction={decorate}] (0,-1) -- node[below,yshift=-1.5pt] {$x^{\pm}$} (1,-1);
 \draw[postaction={decorate}] (1,0) -- node[right,xshift=1.5pt,yshift=0.7pt] {$b^{\pm}$} (1,-1);

 \draw (0.2,-0.4) -- (0.4,-0.2); \draw (0.2,-0.6) -- (0.6,-0.2); \draw (0.2,-0.8) -- (0.8,-0.2); \draw
(0.4,-0.8) -- (0.8,-0.4); \draw (0.6,-0.8) -- (0.8,-0.6);
\end{tikzpicture}\quad
\begin{tikzpicture}[scale=1.2,baseline=-6mm,line width=0.5pt,decoration={
    markings},    ]
 \filldraw[black] (0,0) circle (1pt) (0,-1) circle (1pt) (1,0) circle (1pt) (1,-1) circle (1pt);

 \draw[postaction={decorate}] (0,0) -- node[left,xshift=-1.5pt,yshift=0.7pt] {$b^{\pm}$} (0,-1);
 \draw[postaction={decorate}] (1,0) -- node[above,yshift=1.5pt] {$y^{\pm}$} (0,0);
 \draw[postaction={decorate}] (1,-1) -- node[below,yshift=-1.5pt] {$y^{\pm}$} (0,-1);
 \draw[postaction={decorate}] (1,0) -- node[right,xshift=1.5pt,yshift=0.7pt] {$b^{\pm}$} (1,-1);

 \draw (0.2,-0.4) -- (0.4,-0.2); \draw (0.2,-0.6) -- (0.6,-0.2); \draw (0.2,-0.8) -- (0.8,-0.2); \draw
(0.4,-0.8) -- (0.8,-0.4); \draw (0.6,-0.8) -- (0.8,-0.6);
\end{tikzpicture}
\end{center}
\caption{Example of a $4$-regular subshift associated to the presentation of the group $F_2\times F_2$}\label{fig:example:4reg_F2xF2}
\end{figure}
\end{example}

\vspace{0.2cm}
\textbf{Mixing of regular subshifts.}
A $\mathbb{Z}^2$-subshift $X$ is called \textit{topologically mixing} if, for any nonempty open subsets $U,V$ of $X$, there exists an integer $N$ such that $U\cap\sigma^n(V)\neq\emptyset$ for all $n\in\mathbb{Z}^2$ with $\|n\|_\infty\geq N$. Equivalently, for any two patterns $p,q$ of the same shape $F$ that occur in $X$, there exists an integer $N$ such that for every $n\in\mathbb{Z}^2$ with $\|n\|_\infty\geq N$, there exists a configuration $x\in X$ such that $x|_F=p$ and $x|_{F+n}=q$.

\begin{proposition}
A regular and extendable $\mathbb{Z}^2$-subshift $X$ is topologically mixing if and only if the graphs $H_n(X)$ and $V_n(X)$ are strongly connected and aperiodic for all $n\in\mathbb{N}$.
\end{proposition}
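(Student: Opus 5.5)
We prove both directions by translating topological mixing into path statements in the transition graphs. Throughout, by Proposition~\ref{prop:ext_matrix_subshift_is_regular}, $X$ is a matrix subshift $X(A,B)$ with $d$-regular $A,B$ that is uniquely extendable. Hence $X$ is homeomorphic to $D(A,B)$, and every admissible rectangular pattern occurs in $X$.

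\emph{Necessity.} Assume $X$ is topologically mixing and fix $n$. Take two vertices $p,q$ of $V_n(X)$, i.e.\ admissible $(n,1)$-patterns. We want to show that for all sufficiently large $k$ there is a walk of length exactly $k$ from $p$ to $q$ in $V_n(X)$; this gives strong connectivity and aperiodicity. Apply mixing to the cylinder sets $[p]$ and $[q]$ with shift vector $(0,k)$. Since $\|(0,k)\|_\infty=k\ge N$, there is $x\in X$ with $x|_F=p$ and $x|_{F+(0,k)}=q$. Reading off the rows $0,1,\dots,k$ of $x$ restricted to the columns of $F$ produces a walk of length $k$ from $p$ to $q$ in $V_n(X)$. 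The graph $H_n(X)$ is handled the same way with the vector $(k,0)$.

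\emph{Sufficiency.} Fix patterns $p,q$ of a common shape. Enlarging the shape if necessary, we may take it to be a square $[1,n]\times[1,n]$. Let $v=(v_1,v_2)$ with $\|v\|_\infty$ large; by symmetry assume $|v_2|\ge|v_1|$, so $|v_2|$ is large. Choose $m\ge n+|v_1|$, large enough that the row window of width $m$ contains both the columns of $p$ and the columns of $q+v$. Extend $p$ upward arbitrarily to an admissible $(m,n)$-rectangle $P$, and extend $q$ to an admissible $(m,n)$-rectangle $Q$ placed at the horizontal offset $v_1$. Both extensions exist by $d$-regularity. Now view the top row of $P$ and the bottom row of $Q$ as vertices of $V_m(X)$. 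Because $V_m(X)$ is strongly connected and aperiodic, its adjacency matrix is primitive, so there is $N_m$ such that walks of every length $\ge N_m$ exist between any two vertices. When $|v_2|-n\ge N_m$, such a walk supplies the intermediate rows, and we obtain an admissible rectangle containing $p$ at $F$ and $q$ at $F+v$. By extendability, every admissible rectangle occurs in $X$, which proves mixing along $v$.

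\emph{Main obstacle: uniformity.} The width $m$ depends on $v_1$, and $N_m$ may grow with $m$, so the argument above does not yet give a single $N$ that works for all $v$. To remove the dependence on $v_1$, we use the covering structure from item~4 of Proposition~\ref{prop:ext_matrix_subshift_is_regular}. The row of $P$ at height $n$ is determined by its restriction to a width-$n$ window together with the horizontal trace. Concretely, by unique extendability, a vertical walk in $V_n(X)$ of a given length lifts uniquely to $V_m(X)$ once the starting row is fixed: extending horizontally, the corner condition determines each new symbol.

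So first use $V_n$-primitivity to connect the top of $p$ to a suitable width-$n$ row, and then lift this walk horizontally, via $H$-extensions and the $D(A,B)$ description, to reach the columns of $q$. The cleaner route is to reduce to one direction: use $H_n$ to move horizontally from $p$ to an intermediate $n\times n$ pattern, and use $V_n$ to move vertically, both with thresholds $N_n$ independent of $v$. Gluing the two moves is legitimate because of the product description $D(A,B)$: a configuration is free to be prescribed along one horizontal line and one vertical line, meeting at a common symbol. I expect making this gluing precise to be the key technical step.
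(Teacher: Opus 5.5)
Your necessity argument is correct and is the same as the paper's. The gap is in sufficiency: you never produce a threshold $N$, depending only on $p$ and $q$, such that every shift $v$ with $\|v\|_\infty\ge N$ is realized. You correctly see that the first construction fails, because $N_m$ depends on $m\approx n+|v_1|$. Neither proposed repair works as stated.

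(a) Lifting a $V_n(X)$-walk to $V_m(X)$ forward from a fixed starting row gives a \emph{determined} endpoint. You cannot force that endpoint to be the row through $q$.

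(b) The ``cleaner route'' uses an $H_n$-walk \emph{and} a $V_n$-walk, each of length at least $N_n$. That needs both $|v_1|$ and $|v_2|$ to be large, but mixing only gives you $\|v\|_\infty$ large. Take $v_2$ huge and $|v_1|$ small, possibly $|v_1|<n$ so the column ranges of $p$ and $q$ overlap. Primitivity of $H_n(X)$ says nothing about walks of a prescribed short length. Detours through a third strip also do not glue, because a region bounded by three prescribed strips is overdetermined by unique extendability.

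The missing idea is that the short displacement needs no walk at all. Say $|v_2|\ge|v_1|$ and $v_2>0$.
\begin{itemize}
\item Extend $q$ (placed at $F+v$) horizontally by $|v_1|$ columns, which $d$-regularity always allows, until its column range also contains $[1,n]$. Let $t$ be the restriction of this rectangle to $F+(0,v_2)$.
\item Join the top row of $p$ to the bottom row of $t$ by a single walk in $V_n(X)$ of length $v_2-n+1$. Such a walk exists once $v_2\ge N_n+n-1$, a bound independent of $v_1$.
\item You now have an admissible L-shaped pattern: a vertical strip of width $n$ from $p$ to $t$, and a horizontal strip of height $n$ through $t$ and $q$.
\item Fill the rectangle it spans cell by cell, starting at the inner corner. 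Each new cell has its left, upper-left and upper neighbours already known, so extendability supplies it. The resulting admissible rectangle occurs in $X$.
\end{itemize}
The horizontal-dominant case is symmetric and uses $H_n(X)$, so $N=N_n+n$ works for all $v$. This is exactly the paper's proof with the axes swapped: the paper extends $q$ vertically to $t$ at $F+(n,0)$ and walks in $H_k$. The ``gluing'' you flag as the key technical step is routine corner completion. The real missing step was to obtain the intermediate pattern $t$ from $q$ by free extension, rather than to reach it by a second walk.
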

\begin{proof}
For patterns of shape $(1,n)$, the horizontal shift is mixing if and only if the graph $H_n(X)$ is strongly connected and aperiodic. The same holds for the graphs $V_n(X)$ and the vertical shift. We need to prove that this is sufficient to have $\mathbb{Z}^2$-mixing for any patterns.

Let $p$ and $q$ be two admissible patterns of $(k,k)$-square shape $F$. Suppose $q$ is shifted so that its support is $F+(n,m)$ for some $n,m\in\mathbb{N}$, as shown in Figure~\ref{fig:filling_squares}. The pattern $q$ can be extended to an admissible pattern $t$ with support $F+(n,0)$.
Let $v$ be the $(1,k)$-pattern on the right side of $p$, and $u$ be the $(1,k)$-pattern on the left side of $t$. Since the graph $H_k(X)$ is strongly connected and aperiodic, there exists an integer $N$ such that for every $n\geq N$, the graph $H_k$ contains a directed path of length $n$ from $v$ to $u$. Therefore, for every $n\geq N$, the pattern $p$ can be extended horizontally to match $t$, and, by the extendability property of $X$, this rectangular patten can be further extended vertically to include $q$. A similar argument using the graphs $V_k(X)$ applies to vertical shifts. It follows that the subshift $X$ is topologically mixing.
\end{proof}

A $d$-regular $\mathbb{Z}^2$-subshift $X$ admits a natural $\mathbb{Z}^2$-invariant probability measure $\mu$. The measure $\mu$ is defined on cylinder sets by the rule
\[
\mu(C_{m,n})=\frac{1}{sd^{m-1}d^{n-1}},
\]
where $C_{m,n}$ is a cylinder set supported on an $(m,n)$-rectangular shape, and $s$ is the number of symbols from $W$ that occur in $X$. The measure $\mu$ extends uniquely to a Borel probability measure on $X$ by the Kolmogorov extension theorem.
Intuitively, under $\mu$ every admissible extension guaranteed by $d$-regularity is equally likely: for a given rectangular pattern, all admissible ways to extend it one column to the right/left (resp.\ one row upward/downward) have the same conditional probability.

An interesting question is to understand under what conditions the system $(X,\mathbb{Z}^2,\mu)$ is strongly mixing. Since the graphs $H_n(X)$ and $V_n(X)$ govern the mixing behavior of patterns of shapes $(1,n)$ and $(n,1)$, strong mixing properties of the system imply that these graphs must possess good expansion properties.
We will now show that, under the extendability assumption, this condition is indeed sufficient.

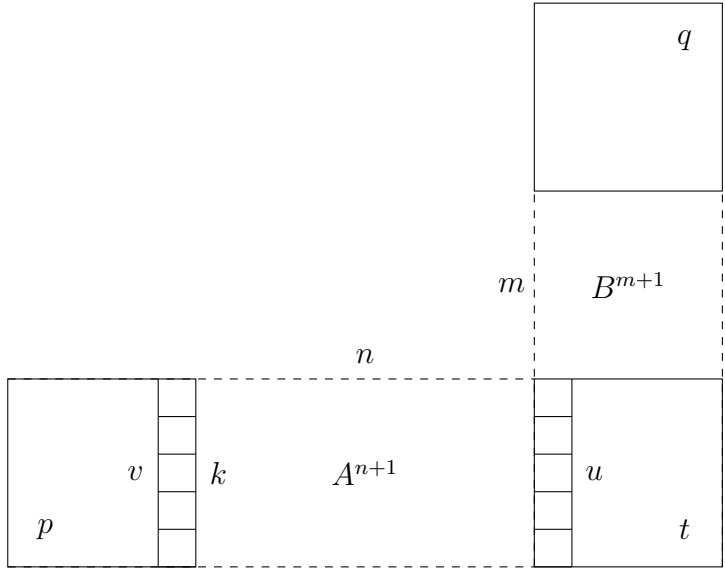
\begin{figure}\centering
\begin{tikzpicture}
  \def\S{2.5}         
  \def\n{7}           
  \def\m{5}           
  \def\g{0.5}        
  \def\N{5}           

  \draw (0,0) rectangle ++(\S,\S);              
  \draw (\n,0) rectangle ++(\S,\S);             
  \draw (\n,\m) rectangle ++(\S,\S);            

  \foreach \i in {0,...,4} {
    \draw (\S - \g, \i*\g) -- (\S, \i*\g);   
    \draw (\n+\g, \i*\g) -- (\n, \i*\g);     
  }
   \draw (\S - \g, 0) -- (\S - \g, \S);
   \draw (\n+\g, 0) -- (\n+\g, \S);

  \draw[dashed] (0,0) -- (\n,0);
  \draw[dashed] (0,\S) -- (\n,\S);

  \draw[dashed] (\n,0) -- (\n,\m);
  \draw[dashed] (\n+\S,0) -- (\n+\S,\m);

  \node at ({0.2*\S},{0.2*\S}) {$p$};
  \node at ({\S+0.3},{0.5*\S}) {$k$};
  \node at ({\S-0.3-\g},{0.5*\S}) {$v$};
  \node at ({\n+0.8*\S},{\m+0.8*\S}) {$q$};
  \node at ({\n+0.8*\S},{0.2*\S}) {$t$};
  \node at ({\n+\g+0.3},{0.5*\S}) {$u$};
  \node at ({0.5*(\n+\S)},{\S+0.3}) {$n$};
  \node at ({0.5*(\n+\S)},{0.5*\S}) {$A^{n+1}$};
  \node at ({\n-0.3},{0.5*(\S+\m)}) {$m$};
  \node at ({\n+0.5*\S},{0.5*(\S+\m)}) {$B^{m+1}$};


\end{tikzpicture}
\caption{Filling two patterns at horizontal distance $n$ and vertical distance $m$}
\label{fig:filling_squares}
\end{figure}

\begin{lemma}\label{lem:constant_C(d,r)}
For every $d\geq 2$ and $r\geq 1$, there exists a constant $K=K(d,r)$ such that, for every $d$-regular and  $r$-normal transition matrix $A$ of any dimension $m\geq 1$ and every $n\geq 0$, the following holds:
\[
\|\tfrac{1}{d^{n}}A^n-\tfrac{1}{m}J\|_{\infty} \leq Kn^{r-1}\lambda(A)^{n}/d^{n},
\]
where $J$ is the $m\times m$ matrix with all entries equal to $1$.
\end{lemma}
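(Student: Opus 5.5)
Write $A = UTU^*$, where $U$ is unitary and $T$ is block-diagonal with blocks $T_1,\dots,T_k$, each of size at most $r$; this is the $r$-normality assumption. Since $A$ is $d$-regular, the all-ones vector $\mathbf{1}$ satisfies $A\mathbf{1}=d\mathbf{1}$ and $A^\top\mathbf{1}=d\mathbf{1}$. Hence the orthogonal complement $\mathbf{1}^\perp$ is invariant under both $A$ and $A^*$. Put $P=\tfrac1m J$, the orthogonal projection onto $\mathrm{span}(\mathbf{1})$. Then $A$ commutes with $P$, $AP=dP$, and
\[
\tfrac{1}{d^n}A^n-\tfrac1m J=\bigl(\tfrac1d A-P\bigr)^n=\tfrac{1}{d^n}(A-dP)^n .
\]
The matrix $B=A-dP$ agrees with $A$ on $\mathbf{1}^\perp$ and is zero on $\mathbf{1}$. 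It therefore suffices to bound $\|B^n\|_{\infty}$ by the operator norm $\|B^n\|_{2}$, since every entry is at most the operator norm. The goal is
\[
\|B^n\|_2\le K n^{r-1}\lambda(A)^n ,
\]
where $\lambda(A)$ is the spectral radius of $B$, as in the earlier discussion of regular subshifts.

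The key step is to show that $B$ is again unitarily equivalent to a block-diagonal matrix with blocks of size at most $r$, and that each block has bounded entries. \textbf{Entry bound.} Every block $T_i$ has operator norm at most $\|A\|_2$. For a $d$-regular $0/1$ matrix, $\|A\|_2\le\sqrt{\|A\|_1\|A\|_\infty}=d$ (row and column sums both equal $d$), so all block entries are bounded by $d$. \textbf{Decoupling the eigenvalue $d$.} The eigenvalue $d$ has modulus equal to the operator norm, so any vector attaining it is a norm-attaining eigenvector. Such vectors are also eigenvectors of $A^*$, and they are orthogonal to every other generalized eigenspace. Thus the $d$-eigenspace $E_d$ reduces $A$, and $A|_{E_d}=d\,I$. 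Choose the unitary decomposition so that $E_d$ splits off as $1\times 1$ blocks equal to $d$; then the remaining reducing subspace $E_d^\perp$ still carries an $r$-normal restriction. I expect this to be the main obstacle, namely making precise that the block structure can be chosen compatibly with $\mathbf{1}$. I would try to prove it via the fact that, since $\|A\|_2=d$, each block containing the eigenvalue $d$ must split: a contraction with a unimodular eigenvalue $1$ has that eigenvector reducing. Inside $E_d$, replace $d\,I$ by $d(I-P)$; this is still diagonal in a basis containing $\mathbf{1}/\sqrt m$. This gives $B=U'T'U'^*$ with blocks of size at most $r$, entries bounded by $d$, and spectral radius $\lambda(A)$ (zero blocks appear where the eigenvalue was $\mathbf{1}$).

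With this in hand, $\|B^n\|_2=\max_i\|T_i'^n\|_2$, so the problem reduces to a single $s\times s$ upper triangular block $T$ with $s\le r$, entries bounded by $d$ (after a Schur triangularization inside the block, which preserves this bound), and diagonal entries of modulus at most $\rho\le\lambda(A)$. Write $T=D+N$ with $D$ diagonal and $N$ strictly upper triangular, so $N^r=0$. Expanding $T^n$, each term is a word containing at most $r-1$ factors of $N$. This gives
\[
\|T^n\|_2\le\sum_{j=0}^{r-1}\binom{n}{j}\|N\|_2^{j}\rho^{\,n-j}\le C(d,r)\,n^{r-1}\rho^{n}\rho^{-(r-1)} .
\]
The factor $\rho^{-(r-1)}$ is harmless when $\rho$ is bounded below. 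To avoid dividing by small $\rho$, I would instead use directly that the entries of $T^n$ are sums over upper-triangular paths, each with at most $r-1$ off-diagonal steps. Bounding the off-diagonal steps by $d$ and the diagonal steps by $\lambda(A)$ gives a bound $C(d,r)\,n^{r-1}\lambda(A)^{n-r+1}$, where the exponent comes from the count of at least $n-r+1$ diagonal steps. Since $d^n/d^{n}$-normalization absorbs $\lambda(A)^{-(r-1)}\ge d^{-(r-1)}$ only in the wrong direction, I would handle the small-$\lambda$ case separately. If $\lambda(A)$ is tiny, the claimed bound with $K$ depending on $d,r$ still follows for $n\ge r$, because the nilpotent part dies after $r$ steps; for $n<r$ the trivial bound $\|\tfrac{1}{d^n}A^n-\tfrac1mJ\|_\infty\le1$ is used. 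I would adjust $K$ accordingly; the case $\lambda(A)=0$ is the degenerate point to check carefully. Dividing by $d^n$ yields the lemma with $K$ depending only on $d$ and $r$, uniform in $m$.
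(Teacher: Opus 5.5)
The paper gives no proof of its own here; it cites Parzanchevski, Proposition~4.1. Your decoupling, Schur-triangularization and path-counting route is the standard proof of such estimates. Everything up to the path count is sound, including splitting off $\mathbf 1$ via the fact that an eigenvector of a block for the eigenvalue $d=\|A\|_2$ is reducing. But what the path count actually yields is
\[
\bigl\|\tfrac{1}{d^n}A^n-\tfrac1mJ\bigr\|_\infty\le C(r)\,n^{r-1}\bigl(\lambda(A)/d\bigr)^{n-r+1},\qquad n\ge\max(1,r-1).
\]
The gap is the final step, where you trade $\lambda^{n-r+1}$ for $\lambda^{n}$ with $K$ depending only on $(d,r)$.

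Your small-$\lambda$ patch does not work, for two reasons:
\begin{itemize}
\item For $0<\lambda(A)\ll1$ the nilpotent part does not die after $r$ steps. Already for a block $\left(\begin{smallmatrix}\lambda&\beta\\0&\lambda\end{smallmatrix}\right)$, the corner entry of the $n$-th power is $n\beta\lambda^{n-1}$, which is not $O(n\lambda^n)$ uniformly as $\lambda\to0$.
\item For $n<r$, the trivial bound $1$ is not below $Kn^{r-1}\lambda^n/d^n$ once $\lambda$ is small.
\end{itemize}
The missing idea for $\lambda(A)>0$ is integrality. The eigenvalues of $A$ on the invariant subspace $\mathbf 1^\perp$ are the roots of the monic integer polynomial $\chi_A(x)/(x-d)=x^k h(x)$ with $h(0)\neq0$. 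If $\lambda(A)>0$, then $h$ is nonconstant, and $|h(0)|\ge1$ is the product of the moduli of its roots, so $\lambda(A)\ge1$. Then $d^j\lambda^{n-j}\le d^{r-1}\lambda^n$ for all $j\le r-1$, and your path count gives the lemma with $K=C(r)d^{r-1}$ for every $n\ge1$.

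For $\lambda(A)=0$ no choice of $K$ works, because the statement is false there for $1\le n\le r-1$. Take the $4\times4$ matrix $A$ in the paper's example of a non-extendable $2$-regular subshift (a de Bruijn graph). It satisfies $A^2=J$, and $A-\tfrac12J=uv^{\top}$ with $u=(1,-1,-1,1)$ and $v=\tfrac12(1,1,-1,-1)$, where $u$, $v$, $\mathbf 1$ are pairwise orthogonal. Hence $A$ is unitarily equivalent to $(2)\oplus\left(\begin{smallmatrix}0&2\\0&0\end{smallmatrix}\right)\oplus(0)$. So $A$ is $2$-regular and $2$-normal with $\lambda(A)=0$, yet $\|\tfrac12A-\tfrac14J\|_\infty=\tfrac14$ while the right-hand side at $n=1$ is $0$. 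The case $n=0$ also fails for $r\ge2$, and your identity $(\tfrac1dA-P)^n=\tfrac1{d^n}A^n-P$ needs $n\ge1$.

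So your argument, completed by the integrality bound, proves the lemma for $\lambda(A)>0$ and $n\ge1$. In general one should use the displayed $\lambda^{n-r+1}$ form. That form is all the paper's mixing theorem needs, since only the upper bound $\lambda(A_k)/d\le\lambda$ with a fixed $\lambda>0$ enters there, at the cost of a factor $\lambda^{-(r-1)}$ in the constant.
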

\begin{proof}
See Proposition~4.1 in \cite{Parzanchevski2020}.
\end{proof}

\begin{theorem}
Let $(X,\mathbb{Z}^2,\mu)$ be a $d$-regular and extendable $\mathbb{Z}^2$-subshift. Assume that the $d$-regular graphs $H_n(X)$ and $V_n(X)$ form $(\lambda,r)$-expander family. Then $(X,\mathbb{Z}^2,\mu)$ is strongly mixing, and for any locally constant functions $f,g:X\rightarrow\mathbb{C}$, there exists a constant $C>0$ such that for all $n\in\mathbb{Z}^2$,
\[
\left|\int_X (f\circ \sigma^n)gd\mu - \int_Xfd\mu\int_Xgd\mu\right|\leq C\|n\|_\infty^{r-1}\lambda^{\|n\|_\infty}.
\]
\end{theorem}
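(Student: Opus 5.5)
We reduce to indicator functions of cylinder sets and then exploit the product structure given by unique extendability.

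\textbf{Reduction to cylinders.} By linearity it suffices to take $f=\mathbb{1}_{[p]}$ and $g=\mathbb{1}_{[q]}$, where $p,q$ are admissible patterns on a fixed $(k,k)$-square $F$. Locally constant functions are finite combinations of such indicators once $k$ is large enough. Strong mixing then follows by density of locally constant functions in $L^2(X,\mu)$, since the bound below tends to $0$. By the symmetry $\mu([p]\cap\sigma^{-n}[q])$ versus $\mu([q]\cap\sigma^{n}[p])$, and by reflecting the picture, we may assume $n=(n_1,n_2)$ with $n_1\ge n_2\ge 0$, so that $\|n\|_\infty=n_1$. We also assume $n_1>k$, so that $F$ and $F+n$ are horizontally separated. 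Bounded $n$ only affects the constant $C$.

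\textbf{Step 1: factoring the measure.} Let $t$ range over the admissible $(k,k)$-patterns on $F+(n_1,0)$ that extend downward-vertically to admit $q$ on $F+n$. By unique extendability (Proposition~\ref{prop:ext_matrix_subshift_is_regular}), a pattern on the rectangle spanned by $F\cup(F+n)$ is determined by its bottom strip $[1,n_1+k]\times[1,k]$ together with the column strip above $F+(n_1,0)$. The uniform conditional probabilities defining $\mu$ then give
\[
\mu([p]\cap\sigma^{-n}[q])=\sum_t \mu([p]\cap\sigma^{-(n_1,0)}[t])\cdot\frac{\#\{\text{vertical paths } t\to q \text{ of length } n_2\}}{d^{\,n_2}\,\cdot(\text{normalization})}.
\]
Concretely, I would argue that conditioning on the strip containing $t$, the vertical continuation is an independent $d$-regular Markov chain on $V_k(X)$. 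Hence $\sum_t \mu(t\text{ at } F+(n_1,0),\ q\text{ at } F+n)=\mu(q)$, and the conditional law of $t$ given $q$ is some probability vector $w$.

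\textbf{Step 2: horizontal mixing.} For each fixed $t$, the quantity $\mu([p]\cap\sigma^{-(n_1,0)}[t])$ is governed by walks of length $n_1-k+1$ in $H_k(X)$ from the right column $v$ of $p$ to the left column $u$ of $t$. More precisely, it equals $\mu(p)\,\mu(t)\,\frac{m}{\mu\text{-col}}\,(\tfrac1{d^{N}}A^{N})_{vu}$ up to fixed constants depending on $k$, where $A$ is the adjacency matrix of $H_k(X)$ on $m$ vertices and $N=n_1-k+1$. Replacing $(\tfrac1{d^N}A^N)_{vu}$ by $\tfrac1m$ recovers exactly $\mu(p)\mu(t)$. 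Lemma~\ref{lem:constant_C(d,r)}, with $\lambda(A)\le\lambda d$ from the $(\lambda,r)$-expander hypothesis, bounds the error by $K N^{r-1}\lambda^{N}$. Summing over $t$ with weights $w$, and using that each error term is multiplied by bounded factors depending only on $k$, yields
\[
\bigl|\mu([p]\cap\sigma^{-n}[q])-\mu(p)\mu(q)\bigr|\le C_k\, n_1^{r-1}\lambda^{n_1},
\]
with the $\lambda^{-k+1}$ loss absorbed into $C_k$.

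\textbf{Main obstacle.} The delicate step is Step 1. We must verify rigorously that the events "horizontal transition from $p$ to $t$" and "vertical transition from $t$ to $q$" combine multiplicatively under $\mu$. This means showing that, given the column $u$, the horizontal chain to the left and the vertical chain upward are conditionally independent, with the cylinder-measure formula $\mu(C_{m,n})=1/(sd^{m-1}d^{n-1})$ consistent on the L-shaped region. I would prove it by counting. The number of admissible patterns on the bounding rectangle $[1,n_1+k]\times[1,n_2+k]$ containing $p$, $t$ and $q$ equals (number of horizontal fillings from $p$ to $t$) $\times$ (number of vertical fillings from $t$ to $q$) $\times$ $d^{\text{free}}$. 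Here the last factor counts the free choices of the remaining region, which is independent of $p,q,t$ by unique extendability. Dividing by the total count gives the product formula.
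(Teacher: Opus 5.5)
Your proof is correct, and it handles the two-dimensional offset differently from the paper.

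\textbf{The paper's route.} The paper covers $C$ and $D$ by cylinders with a common vertical extent. Everything is then governed by walks of length $n_1+1$ in a single graph $H_k(X)$. Because this common height $k$ grows with the vertical offset $n_2$, the paper needs the constant $K(d,r)$ of Lemma~\ref{lem:constant_C(d,r)} to be uniform in the dimension $m=sd^{k-1}$.

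\textbf{Your route.} You keep the square $F$ fixed and condition on the corner pattern $t$ at $F+(n_1,0)$. Your counting argument gives exactly the Markov-type identity
\[
\mu([p]\cap\sigma^{-n}[q])=\sum_t \mu\bigl([p]\cap\sigma^{-(n_1,0)}[t]\bigr)\,\frac{\mu\bigl([t]\cap\sigma^{-(0,n_2)}[q]\bigr)}{\mu([t])}.
\]
The ``free'' factor in your count is in fact $1$, since the upper-left block of the bounding rectangle is forced by lower-right corner completions. Since $\mu([t])=1/(sd^{2k-2})$ does not depend on $t$, the error is at most $\mu(p)\mu(q)\,sd^{k-1}\,\|\tfrac{1}{d^N}A_k^N-\tfrac1m J\|_\infty$ with $k$ fixed.

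\textbf{What each approach buys.} In your version only the spectral and Jordan data of one fixed matrix $A_k$ enter, so uniformity of $K(d,r)$ in $m$ is not even needed. You also sidestep a point the paper's covering step leaves implicit. After refining to height $k\approx k_0+n_2$, the prefactor $sd^{k-1}\mu(C)\mu(D)$ grows like $d^{n_2}$. Summing the entrywise estimate over the refined cylinders therefore does not by itself give the rate $\lambda^{\|n\|_\infty}$ when $n_2$ is comparable to $n_1$; an $\ell^2$/operator-norm form of the lemma would be needed there. In exchange, the paper's route is shorter and treats rectangular cylinders of arbitrary shape in one step, without any conditioning argument.
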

\begin{proof}
A locally constant function is a linear combination of characteristic functions of cylinder sets. Therefore, it suffices to prove the statement for cylinder sets supported on rectangular shapes.

Let $C$ and $D$ be two cylinder sets corresponding to admissible patterns $p$ and $q$ of rectangular shapes, respectively. Assume the horizontal distance between $p$ and $q$ is equal to $n$. By covering the sets $C$ and $D$ by cylinder sets of the same vertical extent, we may assume that $p$ and $q$ share the same vertical extent of size $k$ (see Figure~\ref{fig:filling_squares_overlap}). Let $v$ be the $(1,k)$-pattern on the right side of $p$, and $u$ be the $(1,k)$-pattern on the left side of $q$ corresponding to the vertical overlap.
All admissible $(n,k)$ rectangles connecting $p$ and $q$ are given by paths of length $n+1$ between $v$ and $u$ in the graph $H_k(X)$, and each of them extends uniquely to a rectangular pattern containing both $p$ and $q$.
It follows that
\begin{align*}
\mu(C\cap D)=\mu(C)\mu(D)\frac{sd^{k}}{d^{n}}A_k^{n+1}(v,u),
\end{align*}
where $s$ is the size of the alphabet, and $A_k$ is the adjacency matrix of the graph $H_k(X)$, having dimension $m=sd^{k-1}$. Therefore, by Lemma~\ref{lem:constant_C(d,r)}, there exists a constant $K=K(d,r)$ such that
\begin{align*}
\left|\mu(C\cap D)-\mu(C)\mu(D)\right|&\leq \mu(C)\mu(D)sd^{k-1}\|\tfrac{1}{d^{n+1}}A_k^{n+1}-\tfrac{1}{m}J\|_{\infty}  \leq \\ &\leq \|\tfrac{1}{d^{n+1}}A_k^{n+1}-\tfrac{1}{m}J\|_{\infty}\leq\\
&\leq K(n+1)^{r-1}\lambda(A_k)^{n+1}/d^{n+1} \leq K(n+1)^{r-1}\lambda^{n+1}.
\end{align*}
\end{proof}

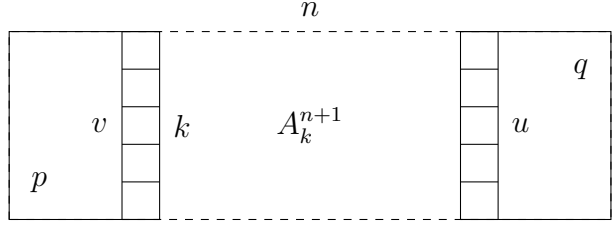
\begin{figure}\centering
\begin{tikzpicture}
  \def\S{2.5}         
  \def\ra{2}         
  \def\rb{2.5}         
  \def\n{6}           
  \def\m{4}           
  \def\k{2.5}
  \def\g{0.5}        
  \def\N{5}           

  \draw (0,0) rectangle ++(\ra,\rb);              
  \draw (\n,0) rectangle ++(\ra,\rb);             
  \draw[dashed] (0,0) rectangle ++({\n+\ra},\rb);            

  \foreach \i in {0,...,5} {
    \draw (\ra - \g, \rb-\i*\g) -- (\ra, \rb-\i*\g);   
    \draw (\n + \g, \rb-\i*\g) -- (\n, \rb-\i*\g);   
  }
   \draw (\ra - \g, \rb) -- (\ra - \g, \rb-5*\g);
   \draw (\n + \g, \rb) -- (\n + \g, \rb-5*\g);
  \node at ({0.2*\ra},{0.2*\rb}) {$p$};
  \node at ({\n+0.8*\ra},{\rb-\k+0.8*\rb}) {$q$};
  \node at ({\ra+0.3},{\rb-\k+0.5*\k}) {$k$};
  \node at ({0.5*(\n+\ra)},{\rb+0.3}) {$n$};
  \node at ({0.5*(\n+\ra)},{\rb-\k+0.5*\k}) {$A_k^{n+1}$};

  \node at ({\ra-\g-0.3},{\rb-\k+0.5*\k}) {$v$};
  \node at ({\n+\g+0.3},{\rb-\k+0.5*\k}) {$u$};

%

\end{tikzpicture}
\caption{Filling two patterns at horizontal distance $n$}
\label{fig:filling_squares_overlap}
\end{figure}

\begin{remark}
The conditions of the theorem actually imply that the system $(X,\mathbb{Z}^2,\mu)$ is mixing of all
orders. Moreover, for any $q\in\mathbb{N}$ and any locally constant functions
$\phi_1,\ldots,\phi_q:X\rightarrow\mathbb{C}$ there exists a constant $C>0$ such that
for all $n_1,\ldots,n_q\in\mathbb{Z}^2$,
\[
\left|\int_X \phi_1\circ\sigma^{n_1}\ldots \phi_q\circ\sigma^{n_q}d\mu-
\left(\int_X\phi_1d\mu\right)\ldots \left(\int_X\phi_qd\mu\right) \right|\leq
Cn^{r-1}\lambda^n,
\]
where $n=\min_{i\neq j} \|n_i-n_j\|_\infty$. This follows by induction on $q$. If the supports of the $q$ cylinder sets are at distance at least $n$ from each other, then there are two disjoint (horizontal or vertical) half-planes at distance $n$ from each other such that the cylinder sets are distributed between them. We can then apply the same argument as in the case $q=2$: the correlation is controlled by paths of length $n$ in the graph $H_k$ or $V_k$.
\end{remark}

In particular, we get the fastest mixing, when the family of graphs $H_n(X)$ and $V_n(X)$ is Ramanujan. This leads us to the following definition.

\begin{definition}
We call a $d$-regular $\mathbb{Z}^2$-subshift $X$ \textit{Ramanujan}, if the dynamical system $(X,\mathbb{Z}^2,\mu)$ is strongly mixing, and there exists $r\geq 0$ with the following property: for any locally constant functions $f,g:X\rightarrow\mathbb{C}$, there exists a constant $C>0$ such that for all $n\in\mathbb{Z}^2$,
\[
\left|\int_X (f\circ \sigma^n)gd\mu - \int_Xfd\mu\int_Xgd\mu\right|\leq C\|n\|_\infty^{r}\left(\tfrac{1}{\sqrt{d}}\right)^{\|n\|_\infty}.
\]
\end{definition}

\begin{corollary}\label{cor:RamSubsh_RamGraphs}
Let $X$ be a $d$-regular and extendable $\mathbb{Z}^2$-subshift. Then $X$ is Ramanujan if and only if the family of the directed $d$-regular graphs $H_n(X)$ and $V_n(X)$ is Ramanujan.
\end{corollary}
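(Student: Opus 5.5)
The plan is to prove the two implications separately, with the preceding theorem doing most of the work in the ``if'' direction and an explicit choice of test functions handling the ``only if'' direction.

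\emph{The ``if'' direction.} Suppose the family $\{H_n(X),V_n(X)\}_{n\geq1}$ is a Ramanujan family of directed $d$-regular graphs. Then every graph in it satisfies $\lambda\leq\sqrt d$ and is $r$-normal for a common $r$. Hence it is a $(1/\sqrt d,r)$-expander family. The preceding theorem, which uses extendability and Lemma~\ref{lem:constant_C(d,r)}, then gives strong mixing together with the bound $C\|n\|_\infty^{r-1}(1/\sqrt d)^{\|n\|_\infty}$. This is exactly the Ramanujan condition with exponent $r-1\geq 0$.

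\emph{The ``only if'' direction: the eigenvalue bound.} Assume $X$ is Ramanujan with exponent $r$. Fix $k$ and let $A=A_k$ be the adjacency matrix of $H_k(X)$, of dimension $m=sd^{k-1}$. For vertices $v,u$ of $H_k(X)$, i.e.\ $(1,k)$-patterns, take the cylinders $C_v$ and $C_u$, the latter shifted horizontally by $n$. By the computation in the proof of the preceding theorem, the correlation equals $\tfrac1m\bigl((\tfrac1dA)^n-\tfrac1mJ\bigr)(v,u)$. Applying the Ramanujan estimate to each of the finitely many pairs $(v,u)$ gives
\[
\|(\tfrac1dA-\tfrac1mJ)^n\|_\infty\leq C_k n^r d^{-n/2}.
\]
Taking $n$-th roots and using the spectral radius formula yields $\rho(\tfrac1dA-\tfrac1mJ)\leq 1/\sqrt d$. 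Since $A$ is $d$-regular, $\tfrac1mJ$ is the spectral projection onto the eigenvalue $d$ (its eigenvector is the constant vector for both $A$ and $A^\top$). Therefore the nontrivial spectrum of $A$ is exactly the spectrum of $A-\tfrac dmJ$ away from $0$, and $\lambda(A)\leq\sqrt d$. Strong mixing also forces $d$ to be a simple eigenvalue, consistent with the mixing criterion recalled in Section~2. The same argument applies to $V_k(X)$ with vertical shifts.

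\emph{The main obstacle: uniform $r$-normality.} The pointwise bound only controls the Jordan block size $j_k$ of eigenvalues of modulus exactly $\sqrt d$, through the polynomial factor: $j_k-1\leq r$. It does not directly bound the size of arbitrary blocks in a unitary block decomposition. My first attempt would be to read ``$r$-normal'' in the corollary as the Jordan-type condition that the correlation bound actually detects. Under that reading, uniformity in $k$ comes from the fact that $r$ is independent of the patterns: the constant $C_k$ may depend on $k$, but the exponent $r$ may not. Because the graphs $H_k$ form $d$-fold covering families (Proposition~\ref{prop:ext_matrix_subshift_is_regular}), the spectrum of $A_k$ contains that of $A_{k-1}$, and the new part comes from the new eigenfunctions. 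Combining this with the uniform polynomial exponent should give a block bound of $r+1$ uniformly in $k$. If the paper's notion of $r$-normality is genuinely unitary, I expect this step to need extra structure, such as explicit normality of the graphs arising from the construction, rather than following formally. I would flag this step as the one requiring the most care.
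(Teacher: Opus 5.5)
Your ``if'' direction is how the paper intends the corollary: it is stated without proof, as the preceding theorem with $\lambda=1/\sqrt d$. Your spectral argument for ``only if'' is also correct, and it is the natural reading of the exact correlation identity in that theorem's proof. For each $k$ you get $\lambda(H_k(X))\le\sqrt d$, and likewise for $V_k(X)$. Since the exponent $r$ does not depend on $f,g$, the Jordan blocks of $A_k$ at eigenvalues of modulus exactly $\sqrt d$ have size at most $r+1$.

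The genuine gap is the one you flagged, uniform $r$-normality, and the proposed repair via coverings and the uniform exponent cannot work, for two reasons.
\begin{enumerate}
\item The constant $C$ may depend on $f,g$, hence on $k$. So the bound $C_k n^r d^{-n/2}$ is blind to any eigenvalue $\mu$ with $|\mu|<\sqrt d$, whatever its Jordan block size $j$, because $n^{j-1}|\mu|^n\le C(j,\mu)\,n^r(\sqrt d)^n$. Even the Jordan sizes inside the disk are not controlled.
\item Bounded Jordan blocks would not give unitary $r$-normality anyway. A matrix with distinct eigenvalues and pairwise non-orthogonal eigenvectors is diagonalizable but has no nontrivial reducing subspace. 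An example is the upper bidiagonal matrix with diagonal $0,1,2$ and superdiagonal $1,1$.
\end{enumerate}
The covering structure does give a unitary splitting $A_k\cong A_{k-1}\oplus N_k$, because both the pullback $\pi^*$ and its transpose $\pi_*$ intertwine the adjacency operators of a covering of regular digraphs. But this only reduces the question to the $r$-normality of the new parts $N_k$. The correlation bounds give nothing about $N_k$ beyond the spectral information above.

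So, with the paper's unitary definition of $r$-normality, your argument does not establish the ``only if'' direction. The paper, which gives no proof, does not address this point either. Reinterpreting $r$-normality as a Jordan condition changes the statement rather than proving it. It would also undercut your own ``if'' direction, which goes through the theorem and hence through the dimension-free constant $K(d,r)$ of Lemma~\ref{lem:constant_C(d,r)}. That constant rests on genuine unitary $r$-normality, and the theorem needs it because the relevant $k$ grows with the vertical component of the shift.

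What you have actually proved is this: if $X$ is Ramanujan, then every $H_k(X)$ and $V_k(X)$ is a Ramanujan digraph whose Jordan blocks at modulus $\sqrt d$ have size at most $r+1$. The stated equivalence follows once uniform $r$-normality of these graphs is known a priori. That is the situation in the paper's applications, where the $H_k$ are non-backtracking matrices and hence $2$-normal (Lubetzky--Peres, cf.\ Proposition~\ref{prop:BassHashimoto}).
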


\vspace{0.2cm}
\textbf{Ramanujan $\mathbb{Z}^{\delta}$-subshifts.}
All of the concepts we have defined extend naturally to multidimensional subshifts. A \(\mathbb{Z}^\delta\)-subshift of finite type \(X\) is called \textit{\(d\)-regular} if every \(\delta\)-dimensional rectangular pattern that occurs in \(X\) has exactly \(d\) distinct extensions in each coordinate direction to a larger \(\delta\)-dimensional rectangular pattern that also occurs in \(X\). The notions of matrix subshifts and extendability likewise generalize, so that a $d$-regular and extendable \(\mathbb{Z}^\delta\)-subshift admits a block representation as a matrix subshift defined by a collection of pairwise commuting $d$-regular transition matrices \(A_1, \ldots, A_\delta\).

Ramanujan $\mathbb{Z}^{\delta}$-subshifts are defined in the same way, and their characterization given in Corollary~\ref{cor:RamSubsh_RamGraphs} continue to hold, where the transition graphs of a $\mathbb{Z}^{\delta}$-subshift $X$ are defined as follows.
Fix a direction $j \in \{1, \ldots, \delta\}$. Let $n=(n_i)_{i=1}^\delta$ be a tuple of nonnegative integers with $n_j=1$, and let $F=\prod_{i=1}^\delta [1,n_i]\subset\mathbb{Z}^{\delta}$ be the $n$-rectangular shape. Define the directed graph $H_n^{(j)}$ whose vertices are admissible patterns of shape $F$, and a directed edge goes from a pattern $p_1$ to a pattern $p_2$, if the combined pattern, formed by shifting $p_2$ by one unit in the $j$-direction and adjoining it to $p_1$, is an admissible pattern in $X$. Then the subshift $X$ is $d$-regular when all the graphs $H_n^{(i)}$ are $d$-regular, and a $d$-regular and extendable $\mathbb{Z}^{\delta}$-subshift is Ramanujan when the family of all graphs $H_n^{(i)}$ is Ramanujan.

\section{VH-datum and regular $\mathbb{Z}^2$-subshifts}\label{sect:VHdatum}

In this section, we describe the construction of regular $\mathbb{Z}^2$-subshifts using the concept of a VH-datum, which was initially introduced in \cite{BurgerMozes:Lattices} (see also \cite{KimberleyRobertson2002}) to describe interesting lattices in the product of two regular trees.

\begin{definition}
An \textit{$(m,n)$-datum} $D=(V,H,R)$ consists of two finite sets $V$ and $H$ with $|V|=2m$ and $|H|=2n$, fixed-point-free involutions $x\mapsto x^{-1}$ on both $V$ and $H$, and a subset $R \subset V \times H \times H \times V$ satisfying the following conditions:
\begin{enumerate}
  \item[(1)] If $(a,b,c,d) \in R$, then each of the tuples
  \[
  (a^{-1},c,b,d^{-1}), \quad (d^{-1},c^{-1},b^{-1},a^{-1}), \quad (d,b^{-1},c^{-1},a)
  \]
  also belongs to $R$.
  \item[(2)] All four $4$-tuples listed in (1) are distinct. Equivalently, $(a,b,b^{-1},a^{-1}) \not\in R$ for all $a \in V$, $b \in H$.
  \item[(3)] Each of the four projections of $R$ to a subproduct of the form $V \times H$ or $H \times V$ is bijective.
\end{enumerate}
\end{definition}

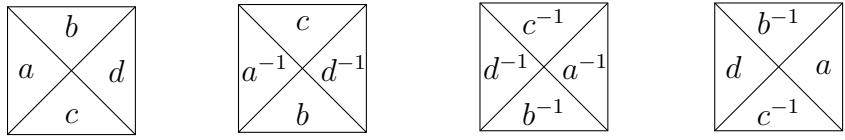
\begin{figure}[t]
\begin{center}
\begin{tikzpicture}
\def\S{1.7}         
\def\g{0.25}

\draw (0,0) rectangle ++(\S,\S);
\draw (0,0) -- (\S,\S);
\draw (\S,0) -- (0,\S);

\node at (\g,{0.5*\S}) {$a$};
\node at ({0.5*\S},{\S-\g}) {$b$};
\node at ({\S-\g},{0.5*\S}) {$d$};
\node at ({0.5*\S},\g) {$c$};
\end{tikzpicture}\qquad\quad
\begin{tikzpicture}
\def\S{1.7}         
\def\g{0.25}

\draw (0,0) rectangle ++(\S,\S);
\draw (0,0) -- (\S,\S);
\draw (\S,0) -- (0,\S);

\node at (\g+0.1,{0.5*\S}) {$a^{-1}$};
\node at ({0.5*\S},{\S-\g}) {$c$};
\node at ({\S-\g-0.05},{0.5*\S}) {$d^{-1}$};
\node at ({0.5*\S},\g) {$b$};
\end{tikzpicture}\qquad\quad
\begin{tikzpicture}
\def\S{1.7}         
\def\g{0.25}

\draw (0,0) rectangle ++(\S,\S);
\draw (0,0) -- (\S,\S);
\draw (\S,0) -- (0,\S);

\node at (\g+0.1,{0.5*\S}) {$d^{-1}$};
\node at ({0.5*\S},{\S-\g}) {$c^{-1}$};
\node at ({\S-\g-0.05},{0.5*\S}) {$a^{-1}$};
\node at ({0.5*\S},\g) {$b^{-1}$};
\end{tikzpicture}\qquad\quad
\begin{tikzpicture}
\def\S{1.7}         
\def\g{0.25}

\draw (0,0) rectangle ++(\S,\S);
\draw (0,0) -- (\S,\S);
\draw (\S,0) -- (0,\S);

\node at (\g,{0.5*\S}) {$d$};
\node at ({0.5*\S},{\S-\g}) {$b^{-1}$};
\node at ({\S-\g},{0.5*\S}) {$a$};
\node at ({0.5*\S},\g) {$c^{-1}$};
\end{tikzpicture}
\end{center}
\caption{The tiles corresponding to four tuples in Property~(1)}\label{fig:Tiles_4tuples_(1)}
\end{figure}

An $(m,n)$-datum $D$ naturally defines a Wang tileset $W$, where every tuple $(a,b,c,d)\in R$ is a unit square tile whose edges are labeled by the colors $a$ (left), $b$ (top), $c$ (bottom), and $d$ (right). Figure~\ref{fig:Tiles_4tuples_(1)} depicts the four tiles arising from the tuples in Property~(1). Property (3) implies that, for every vertical side color $x\in V$ (respectively, horizontal side color $x\in H$), there are exactly $2n$ tiles with left edge color $x$ and $2n$ tiles with right edge color $x$ (respectively, $2m$ tiles with bottom/top edge color $c$). Property (3) also implies that the colors of two adjacent sides uniquely define the tile. Therefore, the tiling space ($\mathbb{Z}^2$-subshift) $X_W$ is uniquely extendable and, when $d=2n=2m$, is $d$-regular.

\begin{remark}
Property (1) implies that $X_W$ is never topologically mixing, because any pair of consecutive  horizontal (respectively, vertical) mutually inverse colors is mapped vertically (respectively, horizontally) to a pair of mutually inverse colors, see Figure~\ref{fig:tiles_inverse_colors}.
\end{remark}

\begin{figure}[t]
\begin{center}
\begin{tikzpicture}
\def\S{1.7}         
\def\g{0.25}

\draw (0,0) rectangle ++(\S,\S);
\draw (0,0) -- (\S,\S);
\draw (\S,0) -- (0,\S);

\node at (\g,{0.5*\S}) {$a$};
\node at ({0.5*\S},{\S-\g}) {$b$};
\node at ({\S-\g},{0.5*\S}) {$d$};
\node at ({0.5*\S},\g) {$c$};

\draw (\S,0) rectangle ++(\S,\S);
\draw (\S,0) -- (\S+\S,\S);
\draw ({\S+\S},0) -- (\S,\S);

\node at ({\S+\g},{0.5*\S}) {$d$};
\node at ({0.5*\S+\S},{\S-\g}) {$b^{-1}$};
\node at ({\S-\g+\S},{0.5*\S}) {$a$};
\node at ({0.5*\S+\S},\g) {$c^{-1}$};
\end{tikzpicture}\qquad\qquad
\raisebox{-8.5mm}{\begin{tikzpicture}
\def\S{1.7}         
\def\g{0.25}

\draw (0,0) rectangle ++(\S,\S);
\draw (0,0) -- (\S,\S);
\draw (\S,0) -- (0,\S);

\node at (\g,{0.5*\S}) {$a$};
\node at ({0.5*\S},{\S-\g}) {$b$};
\node at ({\S-\g},{0.5*\S}) {$d$};
\node at ({0.5*\S},\g) {$c$};

\draw (0,\S) rectangle ++(\S,\S);
\draw (0,\S) -- (\S,{\S+\S});
\draw (\S,\S) -- (0,{\S+\S});

\node at ({\g+0.1},{0.5*\S+\S}) {$a^{-1}$};
\node at ({0.5*\S},{\S-\g+\S}) {$c$};
\node at ({\S-\g-0.05},{0.5*\S+\S}) {$d^{-1}$};
\node at ({0.5*\S},{\g+\S}) {$b$};
\end{tikzpicture}  }
\end{center}
\caption{Two tiles joined along sides with inverse colors}\label{fig:tiles_inverse_colors}
\end{figure}
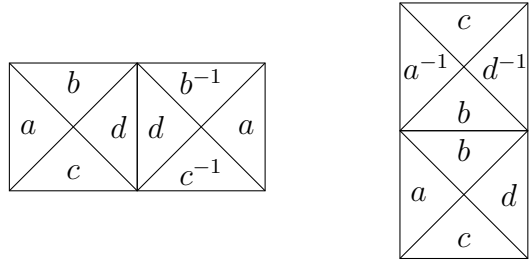

We consider a subshift $X_D$ of $X_W$ by forbidding consecutive mutually inverse colors. This subshift could be explicitly defined as follows.

\begin{definition}
Let $D=(V,H,R)$ be an $(m,n)$-datum. Define two transition matrices $A$ and $B$ over the alphabet $R$ by the following rule: for $t=(a,b,c,d)\in R$ and $t'=(a',b',c',d')\in R$, set
\begin{align*}
A(t,t')&=1 \quad \mbox{ if $d=a'$ and $c'\neq c^{-1}$ (which implies $b'\neq b^{-1}$);}\\
B(t,t')&=1 \quad \mbox{ if $b=c'$ and $a'\neq a^{-1}$ (which implies $d'\neq d^{-1}$);}
\end{align*}
see Figure~\ref{fig:TilesX_D}.
Let $X_D$ denote the matrix subshift defined by the matrices $A$ and $B$.
\end{definition}

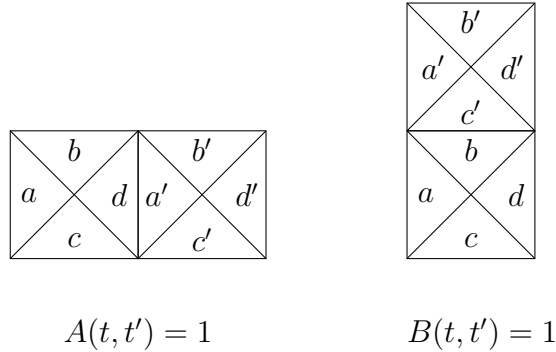
\begin{figure}
\begin{center}
\begin{tikzpicture}
\def\S{1.7}         
\def\g{0.25}

\draw (0,0) rectangle ++(\S,\S);
\draw (0,0) -- (\S,\S);
\draw (\S,0) -- (0,\S);

\node at (\g,{0.5*\S}) {$a$};
\node at ({0.5*\S},{\S-\g}) {$b$};
\node at ({\S-\g},{0.5*\S}) {$d$};
\node at ({0.5*\S},\g) {$c$};

\draw (\S,0) rectangle ++(\S,\S);
\draw (\S,0) -- (\S+\S,\S);
\draw ({\S+\S},0) -- (\S,\S);

\node at ({\S+\g},{0.5*\S}) {$a'$};
\node at ({0.5*\S+\S},{\S-\g}) {$b'$};
\node at ({\S-\g+\S},{0.5*\S}) {$d'$};
\node at ({0.5*\S+\S},\g) {$c'$};

\node at (1.7,-1) {$A(t,t')=1$};

\end{tikzpicture}\qquad\qquad
\raisebox{0.0mm}{\begin{tikzpicture}
\def\S{1.7}         
\def\g{0.25}

\draw (0,0) rectangle ++(\S,\S);
\draw (0,0) -- (\S,\S);
\draw (\S,0) -- (0,\S);

\node at (\g,{0.5*\S}) {$a$};
\node at ({0.5*\S},{\S-\g}) {$b$};
\node at ({\S-\g},{0.5*\S}) {$d$};
\node at ({0.5*\S},\g) {$c$};

\draw (0,\S) rectangle ++(\S,\S);
\draw (0,\S) -- (\S,{\S+\S});
\draw (\S,\S) -- (0,{\S+\S});

\node at ({\g+0.1},{0.5*\S+\S}) {$a'$};
\node at ({0.5*\S},{\S-\g+\S}) {$b'$};
\node at ({\S-\g-0.05},{0.5*\S+\S}) {$d'$};
\node at ({0.5*\S},{\g+\S}) {$c'$};

\node at (1,-1) {$B(t,t')=1$};
\end{tikzpicture}  }
\end{center}
\caption{Tiles adjacency and transition matrices}\label{fig:TilesX_D}
\end{figure}

\begin{proposition}
For a $(d,d)$-datum $D$, the $\mathbb{Z}^2$-subshift $X_D$ is $(2d-1)$-regular and extendable.
\end{proposition}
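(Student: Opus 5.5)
The plan is to use Proposition~\ref{prop:ext_matrix_subshift_is_regular}. That reduces the claim to two facts. First, $A$ and $B$ are $(2d-1)$-regular and consistent. Second, every admissible pair of transitions along two adjacent edges of a $(2,2)$-shape has exactly one admissible completion. Existence of a completion is extendability; uniqueness is item~(3) of that proposition, which then gives $(2d-1)$-regularity of $X_D$.

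\emph{Regularity of $A$ and $B$.} Fix $t=(a,b,c,d)\in R$. A successor $t'$ with $A(t,t')=1$ must have $a'=d$. By Property~(3), the projection of $R$ to the $(V,H)$-coordinates (left, bottom) is bijective, so such tiles correspond to choices of bottom colour $c'\in H$, and there are $|H|=2d$ of them. Excluding $c'=c^{-1}$ removes exactly one, which leaves $2d-1$. For predecessors, fix $t'$; then $d=a'$ is fixed, and the tile $t$ is determined by its bottom colour $c$ via the (right, bottom) projection. The constraint $c\neq c'^{-1}$ again removes one of $2d$ choices. The same counting works for $B$, using the (left, bottom) and (left, top) projections with the constraint on $a$. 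This is where $|V|=|H|=2d$ is needed.

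\emph{Unique completion of corners.} Take, for instance, the configuration with $t_{00}$ at the origin, $t_{10}$ to its right and $t_{01}$ above it, with $A(t_{00},t_{10})=B(t_{00},t_{01})=1$. The missing tile $t_{11}$ must have left colour equal to the right colour of $t_{01}$ and bottom colour equal to the top colour of $t_{10}$. By Property~(3) there is exactly one tile in $R$ with these two colours, so uniqueness is immediate. The content is \emph{existence}, namely that this unique tile satisfies the non-backtracking constraints:
\[
c(t_{11})\neq c(t_{01})^{-1}, \qquad a(t_{11})\neq a(t_{10})^{-1}.
\]

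\emph{Main obstacle.} Verifying these inequalities is the step I expect to be hardest. I will argue by contradiction using Property~(1). Suppose, say, the bottom colour of $t_{11}$ equals the inverse of the bottom colour of $t_{01}$. The tuple $(d,b^{-1},c^{-1},a)$ from Property~(1) is the horizontal reflection of a tile. Applying Property~(3) to it, the tile to the right of $t_{01}$ with inverted bottom colour is forced to be the reflected tile $\bar t_{01}$ (compare Figure~\ref{fig:tiles_inverse_colors}). Since the top colour of $t_{00}$ equals the bottom colour of $t_{01}$, reading $\bar t_{01}$ downward then forces $t_{10}$ to be the reflection of $t_{00}$. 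In particular the bottom colour of $t_{10}$ is the inverse of the bottom colour of $t_{00}$, contradicting $A(t_{00},t_{10})=1$. The other inequality, and the three remaining corners, are handled symmetrically. For those I will use the other tuples of Property~(1): $(a^{-1},c,b,d^{-1})$ for vertical reflection and $(d^{-1},c^{-1},b^{-1},a^{-1})$ for the half-turn. Property~(2) ensures these reflections are genuinely distinct tiles, so the unique-tile identifications are legitimate.

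Once all four corners admit unique admissible completions, consistency of $A$ with $B$ and with $B^\top$ follows from the proposition characterizing extendability by consistency. Proposition~\ref{prop:ext_matrix_subshift_is_regular} then yields that $X_D$ is $(2d-1)$-regular and extendable.
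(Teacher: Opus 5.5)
Your proof is correct and follows the route the paper leaves implicit (the proposition is stated there without proof). Property~(3) gives the $(2d-1)$-regularity of $A$ and $B$ and the uniqueness of corner completions; the reflected tiles of Property~(1) give existence, via the propagation of inverse colour pairs recorded in the Remark referring to Figure~\ref{fig:tiles_inverse_colors} and in the ``which implies'' clauses defining $A$ and $B$; Proposition~\ref{prop:ext_matrix_subshift_is_regular} then finishes. Your appeal to Property~(2) and to the half-turn $(d^{-1},c^{-1},b^{-1},a^{-1})$ is superfluous: all four corners need only the horizontal and vertical reflections together with Property~(3), and the tile identifications require no distinctness.
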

%

The mixing properties of subshifts $X_D$ can be studied through the action of certain finitely presented groups $\Gamma_D$ associated with $D$ on the product of two regular trees, and conversely, VH-data could be constructed using such groups. We now outline this connection (see \cite{BurgerMozes:Lattices,KimberleyRobertson2002} for more details).

An $(m,n)$-datum $D=(V,H,R)$ naturally defines a $2$-dimensional combinatorial cell complex $S_D$. The complex $S_D$ has a single vertex, and its edge set is $E=V\sqcup H$, where each pair of mutually inverse elements corresponds to a single geometric loop. The $2$-cells of $S_D$ are determined by the set $R$: each tuple $(a,b,c,d)\in R$, together with the other three tuples from Property (1), defines one geometric square whose boundary is attached along the corresponding edges. These complexes are precisely the complete VH-complexes with a one vertex defined in \cite{Wise:PhD,Wise:CSC}, and the VH-T-square complexes with one vertex introduced in \cite{BurgerMozes:Lattices}. The universal covering of $S_D$ is the product $T_{2m}\times T_{2n}$ of two regular trees of degrees $2m$ and $2n$ (forming a CAT(0) square complex). The fundamental group $\Gamma_D:=\pi_1(S_D)$ acts freely and cocompactly on $T_{2m}\times T_{2n}$ without interchanging the factors. In particular, $\Gamma_D$ is a torsion-free lattice in the product $Aut(T_{2m})\times Aut(T_{2n})$ of automorphism groups.
Conversely, given a torsion-free lattice $\Gamma<Aut(T_{2m})\times Aut(T_{2n})$ that acts simply transitively on the vertices of $T_{2m}\times T_{2n}$, one can construct a square complex $S_D$  for some $(m,n)$-datum $D$ as the quotient $\Gamma\backslash T_{2m}\times T_{2n}$.


The group $\Gamma_D$ has the following finite presentation:
\[
\Gamma_D=\langle V,H\,|\, aa^{-1}=e \mbox{ for $a\in V\cup H$ and } ab=cd \mbox{ for } (a,b,c,d)\in R\rangle.
\]
It enjoys the following properties (see, for example, \cite[Remark 1.11]{Wise:PhD}):
\begin{enumerate}
  \item[1)] The subgroups $L=\langle V\rangle$ and $R=\langle H\rangle$ are free groups of ranks $m$ and $n$, respectively.
  \item[2)] The group $\Gamma_D$ admits an exact factorization: $\Gamma_D=L\cdot R=R\cdot L$ with $L\cap R=E$.
In particular, every element of $\Gamma_D$ has unique $VH$- and $HV$-normal forms, that is:
\[
\forall g\in \Gamma_D \quad \exists! a,d\in L \quad \exists! b,c\in R \quad \mbox{ such that } \ g=ab=cd.
\]
\end{enumerate}

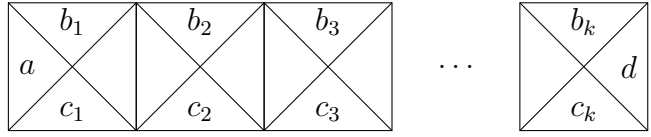
\begin{figure}[t]
\begin{center}
\begin{tikzpicture}
\def\S{1.7}         
\def\g{0.25}

\draw (0,0) rectangle ++(\S,\S);
\draw (0,0) -- (\S,\S);
\draw (\S,0) -- (0,\S);

\node at (\g,{0.5*\S}) {$a$};
\node at ({0.5*\S},{\S-\g}) {$b_1$};
\node at ({\S-\g},{0.5*\S}) {};
\node at ({0.5*\S},\g) {$c_1$};

\draw (\S,0) rectangle ++(\S,\S);
\draw (\S,0) -- (\S+\S,\S);
\draw ({\S+\S},0) -- (\S,\S);

\node at ({\S+\g},{0.5*\S}) {};
\node at ({0.5*\S+\S},{\S-\g}) {$b_2$};
\node at ({\S-\g+\S},{0.5*\S}) {};
\node at ({0.5*\S+\S},\g) {$c_2$};

\draw ({\S+\S},0) rectangle ++(\S,\S);
\draw ({\S+\S},0) -- (\S+\S+\S,\S);
\draw ({\S+\S+\S},0) -- ({\S+\S},\S);

\node at ({\S+\S+\g},{0.5*\S}) {};
\node at ({0.5*\S+\S+\S},{\S-\g}) {$b_3$};
\node at ({\S-\g+\S+\S},{0.5*\S}) {};
\node at ({0.5*\S+\S+\S},\g) {$c_3$};

\node at ({0.5*\S+\S+\S+\S},{0.5*\S}) {$\ldots$};

\draw ({\S+\S+\S+\S},0) rectangle ++(\S,\S);
\draw ({\S+\S+\S+\S},0) -- (\S+\S+\S+\S+\S,\S);
\draw ({\S+\S+\S+\S+\S},0) -- ({\S+\S+\S+\S},\S);

\node at ({\S+\S+\S+\S+\g},{0.5*\S}) {};
\node at ({0.5*\S+\S+\S+\S+\S},{\S-\g}) {$b_k$};
\node at ({\S-\g+\S+\S+\S+\S},{0.5*\S}) {$d$};
\node at ({0.5*\S+\S+\S+\S+\S},\g) {$c_k$};

\end{tikzpicture}
\end{center}
\caption{An admissible $(k,1)$-pattern in the subshift $X_D$}\label{fig:(k,1)-pattern in X_D}
\end{figure}

The action of the group $\Gamma_D$ on the product of trees $T_{2m}\times T_{2n}$ can be described as follows.
The first tree $T_{2m}$ can be identified with the Cayley graph of $L$, and the second one $T_{2n}$ with the Cayley graph of $R$. The subgroups $L$ and $R$ act on their respective Cayley graphs by left multiplication, and act on the other tree by fixing the base vertex (root) $e$. The action of $L$ (respectively, $R$) on the other tree $T_{2n}$ (respectively, $T_{2m}$) can be defined as follows. For any generator $a\in V$ and a reduced word $b_1b_2b_3\ldots b_k\in R$ over $H$, viewed as a vertex of the tree, there exist unique $d\in V$ and a reduced word $c_1c_2c_3\ldots c_k\in R$ such that
\begin{equation}\label{eqn:relation_Gamma_D}
a\cdot b_1b_2b_3\ldots b_k=c_1c_2c_3\ldots c_k\cdot d \ \mbox{ in $\Gamma_D$}
\end{equation}
(equivalently, a pattern shown in Figure~\ref{fig:(k,1)-pattern in X_D} is admissible in the subshift $X_D$). Then $c_1c_2c_3\ldots c_k$ is the image of $b_1b_2b_3\ldots b_k$ under the left action of $a$.

In particular, $L$ acts on the levels of $T_{2n}$, and $R$ acts on the levels of $T_{2m}$. For each $k\geq 0$, let $A_k(\Gamma_D)$ and $\overline{A}_k(\Gamma_D)$ denote the undirected and directed Schreier graphs, respectively, for the action of $L$ on the $k$th level of the tree $T_{2n}$. The vertices of these graphs are reduced words of length $k$ over $H$, and the edges are defined by the action of generators from $V$:
\begin{equation}\label{eqn:edges_Ak}
b_1b_2b_3\ldots b_k \xrightarrow{a} c_1c_2c_3\ldots c_k.
\end{equation}
Thus, $A_k(\Gamma_D)$ is a $2m$-regular graph. Similarly, we define the graphs $B_k(\Gamma_D)$ and $\overline{B}_k(\Gamma_D)$ as the Schreier graphs of the action of $R$ on the $k$th level of the tree $T_{2m}$.

\begin{proposition}
Let $D$ be a $(d,d)$-datum and $\Gamma_D$ the associated lattice in the product of trees $T_{2d}\times T_{2d}$. Then the $(2d-1)$-regular $\mathbb{Z}^2$-subshift $X_D$ is Ramanujan if and only if the families of $2d$-regular graphs $A_k(\Gamma_D)$ and $B_k(\Gamma_D)$ are non-bipartite Ramanujan graphs.
\end{proposition}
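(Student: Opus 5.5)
We reduce the statement to Corollary~\ref{cor:RamSubsh_RamGraphs}. Since $X_D$ is $(2d-1)$-regular and extendable, it is Ramanujan if and only if the directed $(2d-1)$-regular transition graphs $H_n(X_D)$ and $V_n(X_D)$ form a Ramanujan family: each is Ramanujan, i.e.\ $\lambda\le\sqrt{2d-1}$, and they are uniformly $r$-normal. The plan is to identify $V_k(X_D)$, up to removing redundant data, with the non-backtracking edge subshift graph (Bass--Hashimoto digraph) of the undirected Schreier graph $A_k(\Gamma_D)$, and similarly $H_k(X_D)$ with that of $B_k(\Gamma_D)$. Proposition~\ref{prop:BassHashimoto} and the Bass--Ihara formula then translate the spectral conditions.

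First I describe the vertices of $V_k(X_D)$, which are admissible $(k,1)$-rows of tiles as in Figure~\ref{fig:(k,1)-pattern in X_D}. By unique extendability, such a row is determined by its left color $a\in V$ and its top word $b_1\cdots b_k$, which is reduced because inverse consecutive colors are forbidden. By \eqref{eqn:relation_Gamma_D}, the bottom word and right color are then $c_1\cdots c_k=a\cdot(b_1\cdots b_k)$ and $d$. So a vertex is exactly a directed edge $b_1\cdots b_k\to c_1\cdots c_k$ of the Schreier graph $A_k(\Gamma_D)$, labelled by $a$, where one should check the orientation convention against \eqref{eqn:edges_Ak}.

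Next I describe the edges of $V_k(X_D)$. A vertical transition stacks a row with left color $a'$ above the given row, with bottom word equal to $b_1\cdots b_k$, subject only to $a'\neq a^{-1}$. The second tile determines the rest, since $B(t,t')=1$ is required only columnwise and the condition $a'\ne a^{-1}$ propagates along the row by the remark in the definition of $B$. In Schreier-graph terms this is an oriented edge followed by a consecutive oriented edge that is not its reverse. Property~(2) ensures the reverse of the edge labelled $a$ is the edge labelled $a^{-1}$ and is never the same edge, so there are no degenerate loops. Hence $V_k(X_D)$ is precisely the non-backtracking digraph of $A_k(\Gamma_D)$, possibly with its direction reversed, which is harmless for spectra and normality. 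Symmetrically, $H_k(X_D)$ is the non-backtracking digraph of $B_k(\Gamma_D)$.

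Finally I conclude with Proposition~\ref{prop:BassHashimoto}. If every $A_k,B_k$ is a non-bipartite $2d$-regular Ramanujan graph, then every non-backtracking matrix is $2$-normal with $\lambda=\sqrt{2d-1}$. Connectedness of $A_k$ is part of being Ramanujan and gives strong connectivity, and non-bipartiteness gives aperiodicity. The family is therefore Ramanujan with $r=2$, which yields the bound with exponent $1$. Conversely, if some $A_k$ fails, the Bass--Ihara formula gives $\lambda(H)>\sqrt{2d-1}$ when a nontrivial eigenvalue has $|\lambda|>2\sqrt{2d-1}$, and an eigenvalue $-(2d-1)$ when it is bipartite. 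Disconnectedness gives a second eigenvalue $2d-1$. In each case the Ramanujan bound fails; via the computation in the proof of the mixing theorem, $\mu(C\cap D)$ is exactly a matrix entry, so some correlation decays too slowly. The main obstacle is the bookkeeping in the identification step. One must verify that the row-determination really yields a bijection onto oriented edges, including the case where a Schreier graph has loops or multi-edges, where the edge must be identified by its label $a$ and not by its endpoints. One must also check that the converse direction of Corollary~\ref{cor:RamSubsh_RamGraphs} really detects failure in a single graph rather than only in the family.
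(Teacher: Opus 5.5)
Your proposal is correct and follows the paper's proof. You identify $V_k(X_D)$ and $H_k(X_D)$ with the non-backtracking digraphs of $A_k(\Gamma_D)$ and $B_k(\Gamma_D)$, then apply Proposition~\ref{prop:BassHashimoto} (with the Bass--Ihara formula for the converse) and Corollary~\ref{cor:RamSubsh_RamGraphs}, spelling out bookkeeping the paper leaves implicit. One small correction: no oriented edge of $A_k(\Gamma_D)$ is its own reverse because the involution on $V$ is fixed-point-free ($a\neq a^{-1}$), not because of Property~(2), which this identification does not need.
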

\begin{proof}
Observe that the directed edges of the graphs $\overline{A}_k$ and $\overline{B}_k$ are in one-to-one correspondence with $(k,1)$- and $(1,k)$-patterns in the subshift $X_D$. Moreover, one pattern can be placed after another if the corresponding edges are incident and non-backtracking (see Figure~\ref{fig:(k,2)-pattern in X_D}).  It follows that the adjacency matrices of the transition graphs $H_k$ and $V_k$ of the subshift $X_D$ are exactly the non-backtracking matrices of the graphs $A_k$ and $B_k$. Therefore, by Proposition~\ref{prop:BassHashimoto}, the directed graphs $H_k$ and $V_k$ are $(2d-1)$-regular Ramanujan if and only if the graphs $A_k(\Gamma_D)$ and $B_k(\Gamma_D)$ are non-bipartite Ramanujan. We can apply Corollary~\ref{cor:RamSubsh_RamGraphs}.
\end{proof}

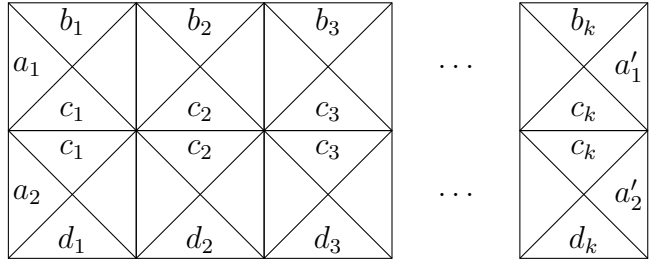
\begin{figure}[t]
\begin{center}
\begin{tikzpicture}
\def\S{1.7}         
\def\g{0.25}

\draw (0,0) rectangle ++(\S,\S);
\draw (0,0) -- (\S,\S);
\draw (\S,0) -- (0,\S);

\node at (\g,{0.5*\S}) {$a_1$};
\node at ({0.5*\S},{\S-\g}) {$b_1$};
\node at ({\S-\g},{0.5*\S}) {};
\node at ({0.5*\S},\g) {$c_1$};

\draw (\S,0) rectangle ++(\S,\S);
\draw (\S,0) -- (\S+\S,\S);
\draw ({\S+\S},0) -- (\S,\S);

\node at ({\S+\g},{0.5*\S}) {};
\node at ({0.5*\S+\S},{\S-\g}) {$b_2$};
\node at ({\S-\g+\S},{0.5*\S}) {};
\node at ({0.5*\S+\S},\g) {$c_2$};

\draw ({\S+\S},0) rectangle ++(\S,\S);
\draw ({\S+\S},0) -- (\S+\S+\S,\S);
\draw ({\S+\S+\S},0) -- ({\S+\S},\S);

\node at ({\S+\S+\g},{0.5*\S}) {};
\node at ({0.5*\S+\S+\S},{\S-\g}) {$b_3$};
\node at ({\S-\g+\S+\S},{0.5*\S}) {};
\node at ({0.5*\S+\S+\S},\g) {$c_3$};

\node at ({0.5*\S+\S+\S+\S},{0.5*\S}) {$\ldots$};

\draw ({\S+\S+\S+\S},0) rectangle ++(\S,\S);
\draw ({\S+\S+\S+\S},0) -- (\S+\S+\S+\S+\S,\S);
\draw ({\S+\S+\S+\S+\S},0) -- ({\S+\S+\S+\S},\S);

\node at ({\S+\S+\S+\S+\g},{0.5*\S}) {};
\node at ({0.5*\S+\S+\S+\S+\S},{\S-\g}) {$b_k$};
\node at ({\S-\g+\S+\S+\S+\S},{0.5*\S}) {$a'_1$};
\node at ({0.5*\S+\S+\S+\S+\S},\g) {$c_k$};


\draw (0,-\S) rectangle ++(\S,\S);
\draw (0,-\S) -- (\S,\S-\S);
\draw (\S,-\S) -- (0,\S-\S);

\node at (\g,{0.5*\S-\S}) {$a_2$};
\node at ({0.5*\S},{\S-\S-\g}) {$c_1$};
\node at ({\S-\g},{0.5*\S-\S}) {};
\node at ({0.5*\S},{\g-\S}) {$d_1$};

\draw (\S,-\S) rectangle ++(\S,\S);
\draw (\S,-\S) -- (\S+\S,\S-\S);
\draw ({\S+\S},-\S) -- (\S,\S-\S);

\node at ({\S+\g},{0.5*\S-\S}) {};
\node at ({0.5*\S+\S},{\S-\g-\S}) {$c_2$};
\node at ({\S-\g+\S},{0.5*\S-\S}) {};
\node at ({0.5*\S+\S},{\g-\S}) {$d_2$};

\draw ({\S+\S},-\S) rectangle ++(\S,\S);
\draw ({\S+\S},-\S) -- (\S+\S+\S,\S-\S);
\draw ({\S+\S+\S},-\S) -- ({\S+\S},\S-\S);

\node at ({\S+\S+\g},{0.5*\S-\S}) {};
\node at ({0.5*\S+\S+\S},{\S-\g-\S}) {$c_3$};
\node at ({\S-\g+\S+\S},{0.5*\S-\S}) {};
\node at ({0.5*\S+\S+\S},{\g-\S}) {$d_3$};

\node at ({0.5*\S+\S+\S+\S},{0.5*\S-\S}) {$\ldots$};

\draw ({\S+\S+\S+\S},-\S) rectangle ++(\S,\S);
\draw ({\S+\S+\S+\S},-\S) -- (\S+\S+\S+\S+\S,\S-\S);
\draw ({\S+\S+\S+\S+\S},-\S) -- ({\S+\S+\S+\S},\S-\S);

\node at ({\S+\S+\S+\S+\g},{0.5*\S-\S}) {};
\node at ({0.5*\S+\S+\S+\S+\S},{\S-\g-\S}) {$c_k$};
\node at ({\S-\g+\S+\S+\S+\S},{0.5*\S-\S}) {$a'_2$};
\node at ({0.5*\S+\S+\S+\S+\S},{\g-\S}) {$d_k$};

\end{tikzpicture}
\end{center}
\caption{An admissible $(k,2)$-pattern in the subshift $X_D$}\label{fig:(k,2)-pattern in X_D}
\end{figure}

\section{Mealy automata and iterated lifts}

In this section, we show that the graphs $A_k(\Gamma_D)$ and $B_k(\Gamma_D)$ associated to a VH-datum $D$ in the previous section are explicit in a very strong sense: the adjacency list of any given vertex in any of these graphs can be computed by a fixed Mealy automaton associated with $D$.

\begin{definition}
A Mealy automaton is a tuple $M=(Q,\Sigma,\delta,\lambda)$, where $Q$ is a finite set of states, $\Sigma$ is a finite input and output alphabet, $\delta:Q\times\Sigma\rightarrow Q$ is the transition function, and $\lambda:Q\times\Sigma\rightarrow \Sigma$ is the output function.

A Mealy automaton $M$ can be identified with a directed labeled graph with the vertex set $Q$ and edges
\[
a \xrightarrow{x \,|\, \lambda(a,x)} \delta(a,x) \quad \text{for all } a \in Q,\, x \in \Sigma.
\]
\end{definition}

A Mealy automaton computes by processing an input string letter by letter and updating its current state accordingly. Given an initial state $a_0\in Q$ and an input string $v=x_1x_2\ldots x_n\in\Sigma^{*}$, the automaton generates an output string $M_{a_0}(v)=y_1y_2\ldots y_n\in\Sigma^{*}$, where $y_i=\lambda(a_{i-1},x_i)$ and $a_i=\delta(a_{i-1},x_i)$, which corresponds to the path in $M$:
\begin{equation*}\label{eqn:action_Mealy}
a_0\xrightarrow{x_1/y_1}a_1\xrightarrow{x_2/y_2}a_2\xrightarrow{x_3/y_3}\ldots\xrightarrow{x_n/y_n}a_n.
\end{equation*}
The action of an automaton on strings can be naturally described by graphs.

\begin{definition}
The action graph $G_n(M)$ of an automaton $M$ is a directed $|Q|$-regular graph with the vertex set $\Sigma^n$ such that, for every state $a\in Q$ and a string $v\in\Sigma^n$, there is an edge between $v$ and $M_a(v)$.
\end{definition}

\begin{proposition}
For any Mealy automaton $M$, the action graphs $G_n(M)$ form a $|\Sigma|$-fold covering family under the natural projection map $\pi_r:\Sigma^{n}\rightarrow\Sigma^{n-1}$, which removes the rightmost symbol of each string.
\end{proposition}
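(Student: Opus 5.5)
We need to show that for every $n\ge 1$ the map $\pi_r:G_n(M)\to G_{n-1}(M)$ is a graph homomorphism that is surjective on vertices, has fibres of size exactly $|\Sigma|$, and is a local bijection on edges. Here the edges of $G_n(M)$ are labelled by states: there is one edge $v\xrightarrow{a}M_a(v)$ for each $a\in Q$, so multiplicities are counted with labels. The whole argument rests on one basic property of Mealy automata: the output on a prefix depends only on that prefix. If $v=ux$ with $u\in\Sigma^{n-1}$ and $x\in\Sigma$, then by the definition of $M_a$ via the path $a_0\to a_1\to\cdots$ we have
\[
M_a(ux)=M_a(u)\,\lambda(a|_u,x),
\]
where $a|_u$ is the state reached after reading $u$ from $a$.

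The steps are as follows.

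\textbf{Step 1 (fibres).} The fibre of $\pi_r$ over a vertex $u\in\Sigma^{n-1}$ is $\{ux : x\in\Sigma\}$. This set has exactly $|\Sigma|$ elements, so $\pi_r$ is a surjective $|\Sigma|$-to-one map on vertices.

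\textbf{Step 2 (edges go to edges).} By the prefix identity, the $a$-labelled edge $ux\to M_a(ux)$ projects to $u\to M_a(u)$. This is the $a$-labelled edge of $G_{n-1}(M)$, so $\pi_r$ is a label-preserving homomorphism.

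\textbf{Step 3 (local bijectivity).} At each vertex $ux$, the outgoing edges are indexed by $Q$, and so are the outgoing edges at $u$. Since $\pi_r$ preserves labels, it induces a bijection between these two sets of outgoing edges.

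For incoming edges we need more care. The edges entering $w=w'y$ are pairs $(a,v)$ with $M_a(v)=w$. To get a bijection onto the incoming edges of $w'$, we need that for each $a$ and each $v'$ with $M_a(v')=w'$ there is exactly one letter $x$ with $\lambda(a|_{v'},x)=y$. In other words, each output map $\lambda(q,\cdot)$ must be a permutation of $\Sigma$, i.e. $M$ must be invertible.

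\textbf{Main obstacle.} The real issue is in Step 3: incoming edges only lift bijectively when $M$ is invertible. There are two ways to handle this.

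The first is to read ``covering'' as it is used for directed Schreier-type graphs, with the out-edges of each vertex indexed by $Q$. Step 3 then gives a covering of out-neighbourhoods with no extra assumption.

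The second is to invoke invertibility. This holds for the automata actually used in the paper, since they come from the bijectivity of the projections in property (3) of a VH-datum: $x\mapsto\lambda(a,x)$ is a bijection. Under invertibility, Step 3 goes through for incoming edges as well. Moreover, in that case every $M_a$ is a bijection of $\Sigma^n$, so $G_n(M)$ is a genuinely $|Q|$-regular directed graph and the full covering statement holds.

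I would state this invertibility remark explicitly. The rest is a direct verification from the prefix identity.
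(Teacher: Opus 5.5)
Your argument is the same as the paper's: the prefix property gives $\pi_r(M_a(v))=M_a(\pi_r(v))$, so $\pi_r$ is a $|\Sigma|$-to-one, label-preserving homomorphism that is bijective on the $|Q|$ out-edges at each vertex. Your caveat about in-edges is correct and addresses something the paper's proof passes over when it claims bijectivity on all adjacent edges: that claim requires every $\lambda(q,\cdot)$ to be a permutation, and it already fails at $n=1$ without this. The same assumption is tacit when the paper calls $G_n(M)$ $|Q|$-regular, and it does hold for the VH-datum automata by property (3).
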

\begin{proof}
Clearly, $\pi_r$ is a $|\Sigma|$-to-$1$ map. By the definition of a Mealy computation, the prefix of the output depends only on the prefix of the input. Therefore, $\pi_r(M_a(v))=M_a(\pi_r(v))$ for any state $a$ and a nonempty string $v$. This shows that $\pi_r$ maps each edge $(v,M_a(v))$ of $G_{n}(M)$ to the edge $(\pi_r(v),M_a(\pi_r(v)))$ of $G_{n-1}(M)$. Hence, $\pi_r$ is a graph homomorphism that is a bijection on the sets of edges adjacent to each vertex.
\end{proof}

The projection that removes the leftmost symbol does not, in general, define a covering map on the action graphs. To obtain a covering, we need additional condition on the Mealy automaton.

\begin{definition}
A Mealy automaton $M=(Q,\Sigma,\delta,\lambda)$ is called reversible, if for every $x\in\Sigma$, the map $\delta_x:Q\rightarrow Q$, $a\mapsto \delta(a,x)$ is a bijection.
\end{definition}

\begin{proposition}
For any reversible Mealy automaton $M$, the action graphs $G_n(M)$ form a $|\Sigma|$-fold covering family under the natural projection map $\pi_\ell:\Sigma^n\to\Sigma^{n-1}$, which removes the leftmost symbol of each string.
\end{proposition}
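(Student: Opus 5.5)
To show that $\pi_\ell$ is a $|\Sigma|$-fold covering of $G_{n-1}(M)$ by $G_n(M)$, I need three things: $\pi_\ell$ maps edges to edges, it is $|\Sigma|$-to-$1$ on vertices, and it is bijective on the edges at each vertex. The second is immediate. For the other two, I fix a string $v=xw$ with $x\in\Sigma$ and $w\in\Sigma^{n-1}$, and a state $a\in Q$. By the definition of the Mealy computation,
\[
M_a(xw)=\lambda(a,x)\,M_{\delta(a,x)}(w),
\]
so $\pi_\ell(M_a(v))=M_{\delta_x(a)}(w)=M_{\delta_x(a)}(\pi_\ell(v))$. Hence the edge $(v,M_a(v))$ of $G_n(M)$, labeled by $a$, maps to the edge $(w,M_{\delta_x(a)}(w))$ of $G_{n-1}(M)$, labeled by $\delta_x(a)$. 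So $\pi_\ell$ is a graph homomorphism, provided edges are regarded as labeled by states, so that the $|Q|$ outgoing edges at each vertex are distinct even when multiple edges occur.

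The outgoing edges at $v$ are indexed by $a\in Q$, and those at $w$ are indexed by $b\in Q$. The induced map $a\mapsto\delta_x(a)$ is a bijection precisely because $M$ is reversible. This is the only place reversibility enters, and it makes $\pi_\ell$ a local bijection on outgoing edges.

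For incoming edges, an edge into $u=yw'$ is a pair $(a,v)$ with $M_a(v)=u$, where $v=xw$. Such a pair requires $\lambda(a,x)=y$ and $M_{\delta(a,x)}(w)=w'$, and it is sent to the pair $(\delta(a,x),w)$, which is an edge into $w'$. I will show this map is a bijection by a counting argument. The graph $G_{n-1}(M)$ has $|Q|\cdot|\Sigma|^{n-1}$ edges, and $G_n(M)$ has $|\Sigma|$ times as many. Since out-edges are already handled, the preimages of each edge of $G_{n-1}(M)$ number exactly $|\Sigma|$, one leaving each vertex of the fiber over its source. These $|\Sigma|$ preimages of an edge $w\to w'$ all land in the fiber over $w'$. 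I then check that they land at distinct vertices of that fiber. If two of them, starting at $xw$ and $x'w$, ended at the same vertex, then $\lambda(a,x)=\lambda(a',x')$ with $\delta(a,x)=\delta(a',x')=b$.

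This last check is the main obstacle. Such a collision is excluded if for each $b$ the map $(a,x)\mapsto\lambda(a,x)$, restricted to $\{(a,x):\delta(a,x)=b\}$, is injective. That holds when $M$ is invertible in addition to reversible, i.e.\ when each $\lambda(a,\cdot)$ is a permutation. I expect to use that the automata in question, coming from a VH-datum, are bireversible (each $\lambda(a,\cdot)$ is a permutation, and the same for the dual automaton). If pure reversibility turns out to be insufficient for in-edges, I will restrict the claim to out-edges, or interpret the covering for underlying undirected graphs when the alphabet is symmetric, and note that in the intended applications the automaton is bireversible, so the full covering holds.
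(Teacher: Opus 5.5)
Your treatment of outgoing edges is the paper's proof. The identity $\pi_\ell(M_a(xw))=M_{\delta_x(a)}(w)$ makes $\pi_\ell$ a homomorphism, and reversibility makes $a\mapsto\delta_x(a)$ a bijection from the out-edges at $xw$ to those at $w$. The paper concludes ``covering'' from this alone and never considers incoming edges, so your first two paragraphs already prove the statement in the sense the paper intends.

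Your in-edge extension, however, rests on a false claim. The reduction is fine: given the out-edge bijection, what you need is that for each target state $b$ the map $(a,x)\mapsto\lambda(a,x)$ is injective on $\{(a,x):\delta(a,x)=b\}$. But invertibility plus reversibility does not give this.

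\emph{Counterexample.} Take $Q=\Sigma=\{0,1\}$ and $\lambda(a,x)=\delta(a,x)=a+x \bmod 2$. Every $\lambda(a,\cdot)$ and every $\delta_x$ is a permutation, yet $(0,0)$ and $(1,1)$ both have target $0$ and output $0$. In $G_2(M)$ the two in-edges at $00$ are the loop via state $0$ and the edge $10\to 00$ via state $1$. Both project to the loop at $0$ labelled $0$, while the in-edge $1\to 0$ of $G_1(M)$ has no preimage at $00$. So your suspicion is right, and even invertibility does not rescue a two-sided covering.

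\emph{The right condition.} What is missing is that the pair (output letter, target state) determines the pair (source state, input letter). Equivalently, $(a,x)\mapsto(\lambda(a,x),\delta(a,x))$ is a bijection $Q\times\Sigma\to\Sigma\times Q$, i.e.\ the inverse automaton is reversible. This is the part of bireversibility beyond ``$M$ and $\partial M$ invertible'', so your parenthetical description of bireversibility is incomplete. For $M_D$ this condition is exactly the bijectivity of the projection $(a,b,c,d)\mapsto(c,d)$ in property (3).

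\emph{A more direct route for VH-data.} A symmetric alphabet by itself does not help. Property (1), however, gives $M_{a^{-1}}=M_a^{-1}$ and $\delta(a^{-1},\lambda(a,x))=\delta(a,x)^{-1}$. Hence the in-edges at $u$ are precisely the reversed out-edges at $u$ with inverse labels, and the out-edge argument handles them without any counting.
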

\begin{proof}
For a string $v=xu$ with $x\in\Sigma$, the Mealy computation gives
$\pi_\ell(M_a(v))=M_{\delta_x(a)}(\pi_\ell(v))$.
Thus $\pi_\ell$ maps each edge $(v,M_a(v))$ of $G_n(M)$ to the edge $(\pi_\ell(v),M_{\delta_x(a)}(\pi_\ell(v)))$ of $G_{n-1}(M)$, so it is a graph homomorphism.
Since $\delta_x$ is a bijection of $Q$, the correspondence
\[
(xu,M_a(xu)) \;\longmapsto\; (u,M_{\delta_x(a)}(u))
\]
is bijective between the outgoing edges of $xu$ and those of $u$.
Hence $\pi_\ell$ is a covering map.
\end{proof}

The covering property from the previous proposition admits the following interpretation in terms of deterministic iterated lifts.

\begin{definition}
Let $M=(Q,\Sigma,\delta,\lambda)$ be a reversible Mealy automaton.
The lifting system $L_M$ associated with $M$ is the collection of rules $R_{a,x}$ for $a\in Q$ and $x\in\Sigma$ of the form
\[
R_{a,x}:\;
\bigl(v \xrightarrow{a} u\bigr)\;\longmapsto\;
\bigl(xv \xrightarrow{b} yu\bigr),
\]
where $b\in Q$ and $y\in\Sigma$ are uniquely determined by $a=\delta(b,x)$ and $y=\lambda(q,x)$.
\end{definition}

\begin{figure}[t]
\begin{center}
\begin{tikzpicture}
[>=stealth,shorten >=2pt,on grid,auto,every initial by arrow/.style={*->,thick},
state/.style={ circle, draw, minimum size=1cm}]

\node () {$b\xrightarrow{x/y}a$};
\node at (0,-1) () {in $M$};
%

\node at (6,0) () {$R_{a,x}$: \ $v\xrightarrow{a}u$ \quad lifts to \quad
$xv\xrightarrow{b}yu$ };
\node at (6,-1) () {in $L_M$};

\end{tikzpicture}
\end{center}
\caption{The lifting rules associated to a reversible Mealy automaton}\label{fig:LiftRuleMealy}
\end{figure}

\begin{definition}
Let $H=(V,E)$ be a directed graph whose edges are labeled by states in $Q$.
The lift of $H$ by the lifting system $L_M$ is the directed graph $L_M(H)$ with the vertex set $\Sigma\times V$, where vertices are written as $xv$. Its edges are defined as follows: for each edge $v \xrightarrow{a} u$ in $H$ and each $x\in \Sigma$, apply the lifting rule $R_{a,x}$ to obtain the edge $xv \xrightarrow{b} yu$ in $L_M(H)$.
\end{definition}

The action graphs $G_n(M)$ can be obtained by iterative application of the lifting $L_M$, starting from the initial graph $G_0(M)$, which consists of a single vertex with a loop for each state $a\in Q$.

Another way to generate a sequence of graphs from a Mealy automaton is via iterated composition of the automaton.
\begin{definition}
Let $M_1=(Q_1, \Sigma, \delta_1, \lambda_1)$ and $M_2=(Q_2, \Sigma, \delta_2, \lambda_2)$ be Mealy automata over the same alphabet $\Sigma$.
The composition of $M_1$ and $M_2$ is the Mealy automaton
$M_1 \circ M_2 = (Q_1 \times Q_2, \Sigma, \delta, \lambda)$,
where
\[
\delta\big((a_1,a_2), x\big) = \big(\delta_1(a_1, \lambda_2(a_2,x)), \delta_2(a_2, x) \big),
\qquad
\lambda\big((a_1,a_2), x\big) = \lambda_1(a_1, \lambda_2(a_2,x)),
\]
for all $(a_1,a_2)\in Q_1 \times Q_2$ and $x\in \Sigma$.
\end{definition}

In the computation by the composition $M_1 \circ M_2$, an input symbol is first processed by $M_2$, and then its output is processed by $M_1$.

For any Mealy automaton $M$, we may consider its iterations $M^{(n)}=M\circ\ldots \circ M$ ($n$ times), which can be viewed as $|\Sigma|$-regular directed graphs. These graphs again form a covering family, which can be seen by realizing them as the action graphs for the dual automaton.

\begin{definition}
Let $M=(Q,\Sigma,\delta,\lambda)$ be a Mealy automaton.
The dual automaton of $M$ is the Mealy automaton $\partial M=(\Sigma, Q, \delta^*, \lambda^*)$, where the roles of the state set and the alphabet are interchanged, and the transition and output functions are defined by
\[
\delta^*(x,a) = \lambda(a,x), \qquad \lambda^*(x,a) = \delta(a,x),
\]
for all $a\in Q$ and $x\in \Sigma$.
\end{definition}

Viewed as an unlabeled graph, the dual automaton $\partial M$ coincides with the action graph $G_1(M)$. More generally, the $n$-th iteration $(\partial M)^{(n)}$ coincides with the action graph $G_n(M)$, while $M^{(n)}$ coincides with the action graph $G_n(\partial M)$ of the dual automaton. In particular, the graphs $M^{(n)}$ form a $|Q|$-fold covering family under the projection map, which removes the rightmost symbol. If $\partial M$ is reversible --- that is, for every $a\in Q$, the map $\lambda_a:\Sigma\rightarrow\Sigma$, $x\mapsto \lambda(a,x)$, is a bijection --- then $M^{(n)}=G_n(\partial M)$ also form a covering family under the projection map, which removes the leftmost symbol.

\begin{definition}
Let $D=(V,H,R)$ be a VH-datum. We associate to it a Mealy automaton $M_D=(Q,\Sigma,\delta,\lambda)$ with the set of states $Q=V$ and the alphabet $\Sigma=H$. The transition and output functions are defined as follows: for each pair $(a,b)\in V\times H$, there is a unique pair $(c,d)\in H\times V$ such that $(a,b,c,d)\in R$, and we set $\delta(a,b)=d$ and $\lambda(a,b)=c$, see Figure~\ref{fig:Mealy_VHdatum}.
\end{definition}

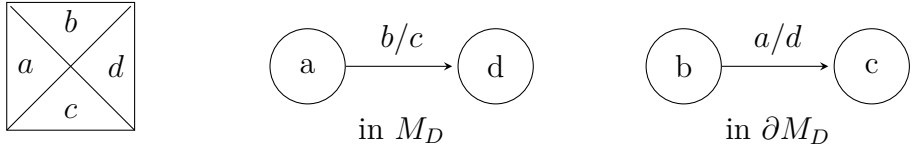
\begin{figure}[t]
\begin{center}
\begin{tikzpicture}
[>=stealth,shorten >=2pt,on grid,auto,every initial by arrow/.style={*->,thick},
state/.style={ circle, draw, minimum size=1cm}]
\def\S{1.7}         
\def\g{0.25}

\draw (0,0) rectangle ++(\S,\S);
\draw (0,0) -- (\S,\S);
\draw (\S,0) -- (0,\S);

\node at (\g,{0.5*\S}) {$a$};
\node at ({0.5*\S},{\S-\g}) {$b$};
\node at ({\S-\g},{0.5*\S}) {$d$};
\node at ({0.5*\S},\g) {$c$};


\node[state] at (4,{0.5*\S}) (a) {a};
\node[state, right=2.5cm of a] (d) {d};
\draw[->] (a) -- node[above] {$b/c$} (d);
\node at (5.25,0) () {in $M_D$};

\node[state] at (9,{0.5*\S}) (b) {b};
\node[state, right=2.5cm of b] (c) {c};
\draw[->] (b) -- node[above] {$a/d$} (c);
\node at (10.25,0) () {in $\partial M_D$};

\end{tikzpicture}
\end{center}
\caption{The Mealy automaton associated to a VH-datum}\label{fig:Mealy_VHdatum}
\end{figure}

\begin{proposition}
Let $D=(V,H,R)$ be a VH-datum, $\Gamma_D$ the associated lattice in the product of two trees, and $M_D$ the associate Mealy automaton.
Then the graphs $\overline{A}_n(\Gamma_D)$ and $\overline{B}\overline{}_n(\Gamma_D)$ coincide with the subgraphs of the action graphs $G_n(M_D)$ and $G_n(\partial M_D)$, respectively, spanned by the set of reduced words of length $n$.
\end{proposition}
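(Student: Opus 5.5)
The plan is to unwind both definitions and check that they describe the same edge sets on the same vertex sets. The vertices of $\overline{A}_n(\Gamma_D)$ are reduced words of length $n$ over $H$, i.e. vertices on the $n$th level of $T_{2n}$ viewed as the Cayley graph of the free group $R=\langle H\rangle$. The vertex set of $G_n(M_D)$ is $H^n$, which contains these reduced words. So it suffices to show two things: first, that for every reduced word $v=b_1\ldots b_n$ and every state $a\in V$, the output $M_{D,a}(v)$ is again reduced; and second, that it equals the image $c_1\ldots c_n$ of $v$ under the action of $a$ defined by (\ref{eqn:relation_Gamma_D}). Then the edges $v\xrightarrow{a}M_{D,a}(v)$ of $G_n(M_D)$ restricted to reduced words are exactly the edges (\ref{eqn:edges_Ak}).

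For the identification, I would run the Mealy computation $a=a_0\xrightarrow{b_1/c_1}a_1\xrightarrow{b_2/c_2}\cdots\xrightarrow{b_n/c_n}a_n$. By the definition of $M_D$, each step corresponds to a tuple $(a_{i-1},b_i,c_i,a_i)\in R$, hence to the relation $a_{i-1}b_i=c_ia_i$ in $\Gamma_D$. Chaining these relations by induction gives
\[
a\,b_1\cdots b_n=c_1\cdots c_n\,a_n .
\]
This is exactly the row of tiles in Figure~\ref{fig:(k,1)-pattern in X_D}. Uniqueness of the $RL$-normal form in $\Gamma_D$ (exact factorization $\Gamma_D=R\cdot L$) then identifies $c_1\cdots c_n$ with the image of $b_1\cdots b_n$ under $a$, once we know $c_1\cdots c_n$ is reduced.

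The main point, and the only real obstacle, is reducedness: I must show $c_{i+1}\neq c_i^{-1}$ whenever $b_{i+1}\neq b_i^{-1}$. I would argue by contradiction using property (1) of the datum. Suppose $c_{i+1}=c_i^{-1}$. From $(a_{i-1},b_i,c_i,a_i)\in R$, property (1) gives $(a_i,b_i^{-1},c_i^{-1},a_{i-1})\in R$. The tuple $(a_i,b_{i+1},c_{i+1},a_{i+1})$ is also in $R$ and has the same $(a_i,c_{i+1})=(a_i,c_i^{-1})$ in the $V\times H$ projection onto (left, bottom). By bijectivity of that projection (property (3)), the two tuples coincide, so $b_{i+1}=b_i^{-1}$, a contradiction. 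In particular, the restriction to reduced words is a well-defined $2m$-regular subgraph, and it agrees with $\overline{A}_n(\Gamma_D)$.

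For $\overline{B}_n(\Gamma_D)$ the argument is symmetric, with the roles of $V$ and $H$ exchanged. The dual automaton $\partial M_D$ has transitions $b\xrightarrow{a/d}c$ exactly when $(a,b,c,d)\in R$, i.e. $ab=cd$. Reading a word $a_1\cdots a_n$ over $V$ from state $b$ then yields the relation $b\,a_1\cdots a_n=d_1\cdots d_n\,c$, after rewriting via the inverse-tuple symmetries of property (1) so that the $H$-letter is moved through the $V$-word. Reducedness of the output follows from the same property (1)/(3) argument applied to the other pair of projections. Uniqueness of the $LR$-normal form then identifies the output with the action of $R$ on level $n$ of $T_{2m}$. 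The bookkeeping of which side the $H$-letter sits on (left versus right action, and possibly replacing the generator by its inverse) is the only delicate point here, and I would settle it by checking it directly against the definition of the action of $R$ on $T_{2m}$.
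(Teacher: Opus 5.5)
Your treatment of $\overline{A}_n(\Gamma_D)$ is correct and is the paper's argument: you chain the relations $a_{i-1}b_i=c_ia_i$ along the Mealy path and use uniqueness of normal forms. Your reducedness check, via property (1) and bijectivity of the projection onto the first and third coordinates, is a useful addition that the paper leaves implicit.

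The $\overline{B}_n(\Gamma_D)$ half, however, rests on a relation that does not hold for a general datum. A transition $b\xrightarrow{a/d}c$ of $\partial M_D$ means $(a,b,c,d)\in R$, i.e.\ $ab=cd$. Your identity $b\,a_1\cdots a_n=d_1\cdots d_n\,c$ needs $ba=dc$ at each step, i.e.\ $(d,c,b,a)\in R$. In general that tuple is not one of $(a^{-1},c,b,d^{-1})$, $(d^{-1},c^{-1},b^{-1},a^{-1})$, $(d,b^{-1},c^{-1},a)$, so property (1) cannot produce it. What property (1) gives along $b=b_0\xrightarrow{a_1/d_1}b_1\cdots\xrightarrow{a_n/d_n}b_n=c$ (or simply inverting each $a_ib_{i-1}=b_id_i$) is
\[
b^{-1}\,a_1^{-1}\cdots a_n^{-1}=d_1^{-1}\cdots d_n^{-1}\,c^{-1}.
\]
Write $\iota(x_1\cdots x_n)=x_1^{-1}\cdots x_n^{-1}$. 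Uniqueness of the $L\cdot R$ normal form then gives $(\partial M_D)_b(u)=\iota\bigl(b^{-1}\cdot\iota(u)\bigr)$. Since $\iota$ preserves reduced words, it is an isomorphism from the restriction of $G_n(\partial M_D)$ to reduced words onto $\overline{B}_n(\Gamma_D)$, sending the label $b$ to $b^{-1}$. Inverting the generator alone, as you suggest, is not enough: the vertex words must be inverted letterwise too.

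Your relation is valid exactly when $R$ is also closed under $(a,b,c,d)\mapsto(d,c,b,a)$. Only then do the two graphs literally coincide. This holds for the quaternionic data $D_{\tau,\sigma}$, since the $K$-linear anti-automorphism of the quaternion algebra with $Z\mapsto -Z$, $F\mapsto F$ fixes every $1+\alpha F$. It is not, however, an axiom of a VH-datum. The paper's ``the same argument applies'' glosses over the same point, so you were right to flag it, but it has to be resolved as above.
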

\begin{proof}
By construction, there is a one-to-one correspondence between transitions in the automaton $M_D$ and the defining relations of the group $\Gamma_D$: for $a,d\in V$ and $b,c\in H$,
\[
a\xrightarrow{b/c} d \mbox{ in $M_D$} \quad \Leftrightarrow \quad ab=cd \mbox{ in $\Gamma_D$}.
\]
Iterating this correspondence, each edge of the action graph $G_n(M_D)$, arising from a path of length $n$ in $M$,
\begin{equation*}
a\xrightarrow{b_1/c_1}a_1\xrightarrow{b_2/c_2}a_2\xrightarrow{b_3/c_3}\ldots\xrightarrow{b_n/c_n}d,
\end{equation*}
corresponds precisely to the relation (\ref{eqn:relation_Gamma_D}) in the group $\Gamma_D$ and, consequently, to the edge (\ref{eqn:edges_Ak}) in the graph $\overline{A}_n(\Gamma_D)$. The same argument applies to the graph $\overline{B}_n(\Gamma_D)$ and the dual automaton.
\end{proof}

It is straightforward to verify that automata constructed from VH-data are reversible and have reversible duals (in fact, they are bireversible, see \cite{GlasnerMozes2005,BondarenkoKivva} for more details). Consequently, the earlier observation on iterated lifts apply to the graphs $\overline{A}_n(\Gamma_D)$ and $\overline{B}\overline{}_n(\Gamma_D)$, with the caveat that we do not apply those lifting rules that produce trivial reductions of the form $xx^{-1}$. Additionally, the automata respect the given involutions on $V$ and $H$ in the following sense: each directed edge has a corresponding inverse edge, namely
\begin{align*}
&a\xrightarrow{b/c} d \qquad \Leftrightarrow \qquad d\xrightarrow{b^{-1}/c^{-1}} a \qquad \mbox{in $M$},\\
&b\xrightarrow{a/d} c \qquad \Leftrightarrow \qquad c\xrightarrow{a^{-1}/d^{-1}} b \qquad \mbox{in $\partial M$}.
\end{align*}
By gluing opposite edges, the action graphs $G_n(M)$ and $G_n(\partial M)$ can be regarded as undirected graphs, which are $|Q|$-regular and $|\Sigma|$-regular, respectively. The previous statement on the iterated lifts continue to hold in this undirected setting, and thus applies to the graphs $A_n(\Gamma_D)$ and $B_n(\Gamma_D)$.


\section{Quaternionic lattices and Ramanujan subshifts}

In this section, for every odd prime power $q$, we construct $q$-regular Ramanujan subshifts.
The construction relies on the quaternionic lattices introduced in \cite{RSV2019}. The notations from \cite{RSV2019} are preserved.

Let $\bF_q$ be the field of order $q$, where $q$ is a power of an odd prime $p$. Let $K=\mathbb{F}_q(t)$ be the rational function field over $\mathbb{F}_q$. For a place $v$ of $K$, let $K_v$ be the completion of $K$ at $v$, and let $\mathcal{O}_{v}$ be its ring of integers. The Bruhat-Tits building \(T_v\) of \(\mathrm{PGL}_2(K_v)\) is a regular tree of degree \(N(v)+1\), where \(N(v)\) is the size of the residue field at $v$. Its vertices are identified with the coset space $\mathrm{PGL}_2(K_v)/\mathrm{PGL}_2(\mathcal{O}_{v})$, and edges correspond to multiplication by the diagonal matrix $diag(\pi_v,1)$, where $\pi_v$ is a uniformizer of $K_v$. Let $*_v\in T_v$ denote the vertex corresponding to the coset $\mathrm{PGL}_2(\mathcal{O}_{v})$. The group $\mathrm{PGL}_2(K_v)$ acts naturally on the tree $T_v$, and the stabilizer of the vertex $*_v$ is $\mathrm{PGL}_2(\mathcal{O}_{v})$.

We fix a non-square $c\in\mathbb{F}_q^{*}$ and consider the quaternion algebra $D$ over $K$ with $K$-basis $1,Z,F,ZF$ and relations:
\begin{align*}
Z^2=c, \ F^2=t, \ ZF=-FZ.
\end{align*}
Note that the algebra $D$ is independent of the choice of the non-square $c$ up to isomorphism. The set of ramified places of $D$ is $B=\{0,\infty\}$. The reduced norm on $D$ is the map $\operatorname{Nrd}: D \to K$ defined by
\[
\operatorname{Nrd}(u+vZ + xF + yFZ)=
(u^2 - c v^2) - t(x^2 - c y^2).
\]

Let $S$ be a finite set of places of $K$ containing the ramified places $B$. Let $\mathcal{O}_S$ be the ring of $S$-integers. We consider the $S$-arithmetic group
\[
\Lambda_S
= \mathrm{PGL}_{1,D}(\mathcal{O}_{S})
= D^{\times}(\mathcal{O}_{S}) / \mathcal{O}_{S}^{\times}.
\]
We write \([g]\) for the image of an element \(g \in D^{\times}(\mathcal{O}_{S})\) in the quotient \(D^{\times}(\mathcal{O}_{S}) / \mathcal{O}_{S}^{\times}\). Since the set \(S\) contains all ramified places of \(D\), we have
\[
D(\mathcal{O}_{S}) \cong M_2(\mathcal{O}_{S}) \ \mbox{ and } \
\Lambda_S
= \mathrm{PGL}_{1,D}(\mathcal{O}_{S})
\cong \mathrm{PGL}_2(\mathcal{O}_{S}).
\]
In particular, for a place $v\not\in B$, the group $\Lambda_S$ acts on the tree $T_v$. Let \(S_0 = S \setminus B\) and set
\[
T_{S_0} = \prod_{v\in S_0} T_v, \qquad *_{S_0}=\prod_{v\in S_0} *_v.
\]
Then \(\Lambda_S\) acts diagonally on \(T_{S_0}\). This action is transitive but not free: the stabilizer of the vertex $*_{S_0}$ is the dihedral group $\Lambda_B\cong D_{q+1}$ of order $2(q+1)$.

Let $S_0\subseteq \bF_q^{\times}$ and set $S=S_0\cup B$. We are going to define a subgroup $\Gamma_S \leq \Lambda_S$ that acts simply transitively on \(T_{S_0}\), which in this case is a product of $(q+1)$-regular trees.
The subring $\bF_q[Z]\subset D$ is a quadratic field extension of $\bF_q$. Let $N:\bF_q[Z]^{*}\rightarrow\bF_q^{*}$ be the norm map $N(\alpha)=\alpha\cdot\overline{\alpha}=\alpha^{q+1}$. Then the reduced norm can be expressed as
\[
\operatorname{Nrd}(u+vZ + xF + yFZ)=\operatorname{N}(u+vZ)-t\,\operatorname{N}(x+yZ).
\]
In particular, for $\alpha\in \bF_q[Z]$, we have $\operatorname{Nrd}(1+\alpha F)=1-N(\alpha)t$.

For $\tau\in S_0$, define the set
\[
A_\tau=\{ 1+\alpha F : \alpha\in\bF_q[Z] \mbox{ with $N(\alpha)=\tau^{-1}$} \}\subset D^{\times}.
\]
All elements of $A_\tau$ have reduced norm $1-\tau^{-1}t$, which is a uniformizer at the place $t=\tau$. Denote the image of $A_\tau$ in the projective group by $PA_\tau$ and generate subgroups
\begin{align*}
\Gamma_\tau&=\langle PA_\tau \rangle = \langle [1+\alpha F] : \alpha\in\bF_q[Z] \mbox{ with $N(\alpha)=\tau^{-1}$} \rangle  < \Lambda_{\{0,\infty,\tau\}}=:\Lambda_\tau,\\
\Gamma_S &= \langle PA_\tau : \tau\in S_0\rangle = \langle [1+\alpha F] : N(\alpha)^{-1}\in S_0\rangle<\Lambda_S.
\end{align*}

The following properties of $\Gamma_S$ and $\Lambda_S$ are proved in \cite{RSV2019}.
\begin{enumerate}
  \item The set $PA_\tau$ consists of $q+1$ elements and is closed under inversion, here $[1+\alpha F]^{-1}=[1-\alpha F]$. The group $\Gamma_\tau$ acts freely and without inversion on the tree $T_\tau$, and is therefore a free group of rank $\frac{q+1}{2}$.
  \item The group $\Gamma_S$ is a torsion-free normal subgroup of $\Lambda_S$ of index $[\Lambda_S:\Gamma_S]=2(q+1)$, and $\Gamma_S$ acts simply transitively on the vertices of $T_{S_0}$.
  \item The group $\Gamma_S$ has finite presentation with generators $PA_\tau$ for $\tau\in S_0$ and relations:
\begin{enumerate}
\item[(i)] For all \(\tau \in S_0\) and \([1 + \alpha F] \in P A_{\tau}\), we have
\[
[1 + \alpha F] \cdot [1 - \alpha F] = 1.
\]
\item[(ii)]
  For all \(\tau \neq \sigma \in S_0\) and \([1 + \alpha F] \in P A_{\tau}\), \([1 + \beta F] \in P A_{\sigma}\), we have
\begin{align}\label{eqn:square_relations_Gamma_S}
&[1 + \alpha F] \cdot [1 + \beta F]
= [1 + \zeta_{\alpha}(\beta)\,\beta F] \cdot [1 + \zeta_{\beta}(\alpha)\,\alpha F],
\end{align}
where $\zeta_{\alpha}(\beta) = \frac{1 + \alpha/\beta}{1 + \overline{\alpha}/\overline{\beta}}$.
\end{enumerate}
\end{enumerate}

For a nonnegative integer $n$, let $L_v(n)$ denote the $n$-th level of the tree $T_v$, that is, the set of vertices at distance $n$ from the base vertex $*_v$. For a tuple of nonnegative integers $n=(n_v)_{v \in S_0}$, define the corresponding level in the product $T_{S_0}$ as
\[
L_{S_0}(n) = \prod_{v \in S_0} L_v(n_v)\subset T_{S_0}.
\]
Note that $\Gamma_\tau$ fixes the base vertex $*_{S_0\setminus\{\tau\}}$ and therefore preserves each level of $T_{S_0\setminus \{\tau\} }$.

\begin{proposition}\label{prop:graphs are connected}
For each $\tau\in S_0$, the group $\Gamma_\tau$ acts transitively on the levels of $T_{S_0\setminus \{\tau\} }$.
\end{proposition}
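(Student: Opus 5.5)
The plan is to turn transitivity on the levels $L_{S_0\setminus\{\tau\}}(n)$ into a density statement for $\Gamma_\tau$ in a compact group, and then use strong approximation. Put $S'=S_0\setminus\{\tau\}$ and $K'=\prod_{v\in S'}\mathrm{PGL}_2(\mathcal{O}_v)$, which is the stabilizer of $*_{S'}$. Every element of $\Gamma_\tau$ is a product of elements $1\pm\alpha F$ with $\operatorname{Nrd}=1-\tau^{-1}t$. These are units at every $v\in S'$, so $\Gamma_\tau\subset\Lambda_\tau$ embeds diagonally into $K'$. This is why $\Gamma_\tau$ fixes $*_{S'}$ and preserves the levels.

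\emph{Step 1: reduce the problem to finite quotients.} For a fixed tuple $n=(n_v)_{v\in S'}$, the action of $K'$ on $L_{S'}(n)$ factors through the finite quotient $\prod_{v\in S'}\mathrm{PGL}_2(\mathcal{O}_v/\pi_v^{n_v})$. A standard fact is that already the image of $\mathrm{SL}_2(\mathcal{O}_v)$ acts transitively on the sphere $L_v(n_v)$: vertices at distance $n_v$ from $*_v$ correspond to points of $\mathbb{P}^1(\mathcal{O}_v/\pi_v^{n_v})$, and $\mathrm{SL}_2$ acts transitively on these. It therefore suffices to show that the closure $\overline{\Gamma_\tau}$ of $\Gamma_\tau$ in $K'$ contains the image of $\prod_{v\in S'}\mathrm{SL}_2(\mathcal{O}_v)$. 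Since $L_{S'}(n)$ is a product of spheres, transitivity then follows factor by factor.

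\emph{Step 2: strong approximation.} Apply strong approximation to the simply connected group $\mathrm{SL}_{1,D}$ over $K$ with the set of places $\{0,\infty,\tau\}$. The place $\tau$ is split, so $\mathrm{SL}_{1,D}(K_\tau)\cong\mathrm{SL}_2(K_\tau)$ is non-compact, and therefore $\mathrm{SL}_{1,D}(\mathcal{O}_{\{0,\infty,\tau\}})$ is dense in $\prod_{v\notin\{0,\infty,\tau\}}\mathrm{SL}_2(\mathcal{O}_v)$. Projecting to the finitely many places of $S'$ shows that the image of the norm-one elements of $\Lambda_\tau$ is dense in $\prod_{v\in S'}\mathrm{PSL}_2(\mathcal{O}_v)$.

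\emph{Step 3: pass from $\Lambda_\tau$ to $\Gamma_\tau$.} This is the main obstacle. By property~2 applied with $S_0=\{\tau\}$, the group $\Gamma_\tau$ is normal of finite index $2(q+1)$ in $\Lambda_\tau$. Hence $\overline{\Gamma_\tau}$ is an open subgroup of $\overline{\Lambda_\tau}$ of index dividing $2(q+1)$, and it is normalized by the dense image of the norm-one elements. Consequently $\overline{\Gamma_\tau}\cap\prod_{v\in S'}\mathrm{PSL}_2(\mathcal{O}_v)$ is a closed normal subgroup of $\prod_{v\in S'}\mathrm{PSL}_2(\mathcal{O}_v)$ with index prime to $p$. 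Up to the small factors $2$ and $q+1$, this index is controlled by the quotient of $\prod_{v\in S'}\mathrm{PSL}_2(\mathcal{O}_v)$ by its pro-$p$ congruence kernel, i.e.\ by $\prod_{v\in S'}\mathrm{PSL}_2(\mathbb{F}_q)$. For $q\geq5$ each factor $\mathrm{PSL}_2(\mathbb{F}_q)$ is simple and non-abelian of order divisible by $p$, so there is no proper normal subgroup of index prime to $p$, and we get equality.

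If that argument breaks down (in particular when $q=3$), I would try a fallback. One could check directly that $\Gamma_\tau$ surjects onto $\mathrm{PGL}_2(\mathbb{F}_q)$ at each $v\in S'$, giving transitivity on $L_v(1)$, using the explicit generators $[1+\alpha F]$ reduced modulo $t-v$. One could then lift transitivity to higher levels through the covering structure of the Schreier graphs $A_k$, which form a covering family by Section~5, together with the $p$-group structure of the congruence kernels. Alternatively, one could compare with the simply transitive action of $\Gamma_S$ on $T_{S_0}$ through the exact factorization $\Gamma_S=\Gamma_\tau\cdot\Gamma_{S'}$ and count orbits.
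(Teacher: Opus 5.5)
Your route is the paper's: strong approximation at the places of $S'=S_0\setminus\{\tau\}$, combined with the fact that $\Gamma_\tau$ is normal in $\Lambda_\tau$ of index $2(q+1)$, which is prime to $p$. For $q\geq 5$ your argument is complete. The gap is the final step of Step~3, which rests on the simplicity of $\mathrm{PSL}_2(\mathbb{F}_q)$. That fails for $q=3$, where $\mathrm{PSL}_2(\mathbb{F}_3)\cong A_4$. For that case you only list fallbacks, and they are not arguments. The statement covers every odd prime power, and $q=3$ is the paper's running example, so as written the proof is incomplete.

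\textbf{The missing observation.} It is what the paper's proof uses, and it removes both the case distinction and the detour through the pro-$p$ congruence kernel. If $N$ is a normal subgroup of a finite group $G$ with $[G:N]$ prime to $p$, then $N$ contains every element of $p$-power order, since such an element has trivial image in $G/N$.

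\textbf{Applying it.} $\mathrm{SL}_2$ over the local ring $\mathcal{O}_v/\pi_v^{n_v}\mathcal{O}_v$ is generated by elementary unipotent matrices, which have order $p$. Hence
\[
P_n=\prod_{v\in S'}\mathrm{PSL}_2(\mathcal{O}_v/\pi_v^{n_v}\mathcal{O}_v)
\]
is generated by elements of order $p$. By your Step~2, $P_n$ lies in the image of $\Lambda_\tau$. So the image of $\Gamma_\tau$ meets $P_n$ in a normal subgroup of $P_n$ of index dividing $2(q+1)$. It therefore contains $P_n$, for every odd $q$; for $q=3$ this is just the fact that $A_4$ is generated by its $3$-cycles. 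Transitivity on $L_{S'}(n)$ follows as in your Step~1.

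\textbf{Where your version is more careful.} Apart from this, your write-up improves on the paper's text in two respects.
\begin{itemize}
\item You apply strong approximation to the simply connected group $\mathrm{SL}_{1,D}$ and claim density only in $\prod_{v\in S'}\mathrm{PSL}_2(\mathcal{O}_v)$. The paper asserts density in $\prod_{v}\mathrm{PGL}_2(\mathcal{O}_v)$, which is more than strong approximation provides and more than is needed.
\item You obtain the full product $\prod_{v\in S'}\mathrm{PSL}_2$ inside the closure of $\Gamma_\tau$, which is what transitivity on the product level $L_{S'}(n)$ actually requires. The paper argues one component at a time.
\end{itemize}
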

\begin{proof}
The $n$-th level of the Bruhat-Tits tree $T_v$ consists of vertices defined by the left cosets:
\[
L_v(n)=\{ gPGL_2(\mathcal{O}_v) : g\in PGL_2(\mathcal{O}_v) diag(\pi_v^n,1)PGL_2(\mathcal{O}_v) \}.
\]
The stabilizer of the level $L_v(n)$ in the group $PGL_2(\mathcal{O}_v)$ is the principal congruence subgroup of level $n$, the kernel of the projection map
\[
\text{PGL}_2(\mathcal{O}_v) \rightarrow \text{PGL}_2(\mathcal{O}_v/\pi_v^n\mathcal{O}_v).
\]
Consequently, the action of $\text{PGL}_2(\mathcal{O}_v)$ on $L_v(n)$ factors through the finite quotient group $\text{PGL}_2(\mathcal{O}_v/\pi_v^n\mathcal{O}_v)$. Note that this group and its subgroup $\text{PSL}_2(\mathcal{O}_v/\pi_v^n\mathcal{O}_v)$ act transitively on $L_v(n)$.

By the strong approximation theorem for $\mathrm{PGL}_2$ over function fields, the image of the arithmetic group $\Lambda_\tau$ in the product
\[
\prod_{v \in S_0\setminus\{\tau\}} \mathrm{PGL}_2(\mathcal{O}_{v})
\]
is dense. In particular, the reduction map
\[
\Lambda_\tau \longrightarrow \prod_{v \in S_0\setminus\{\tau\}} \mathrm{PGL}_2(\mathcal{O}_{v}/\pi_v^{n_v}\mathcal{O}_{v})
\]
is surjective for every tuple $n=(n_v)_{v\in S_0\setminus\{\tau\}}$ of nonnegative integers. Since $\Gamma_\tau$ is a normal subgroup of $\Lambda_\tau$ of index $2(q+1)$, which is coprime to $p$, for each $v\in S_0\setminus\{\tau\}$, the $v$-component of the image of $\Gamma_\tau$ under the reduction map contains $\mathrm{PSL}_2(\mathcal{O}_{v}/\pi_v^{n_v}\mathcal{O}_{v})$. Therefore, $\Gamma_\tau$ acts transitively on the $n$-th level $L_{S_0\setminus\{\tau\}}(n)$ of $T_{S_0\setminus\{\tau\}}$.
\end{proof}

Let us associate a $\mathbb{Z}^{|S_0|}$-subshift $X_{S}$ to the lattice $\Gamma_{S}$. The alphabet is $W=\prod_{v\in S_0} PA_v$. Geometrically, every symbol $t=(t_v)_{v\in S_0}\in W$ can be interpreted as an $|S_0|$-dimensional unit hypercube $Q_t=[0,1]^{|S_0|}$ together with a coloring of its edges. For $v\in S_0$, the edge of $Q_t$ between the origin $0$ to the basis vector $e_v$ in the $v$-th direction receives the label $t_v$; the labels of all the remaining edges are uniquely determined by the filling-square relations (\ref{eqn:square_relations_Gamma_S}).
We define the $\mathbb{Z}^{|S_0|}$-subshift $X_{S}$ as the set of configurations $x:\mathbb{Z}^{|S_0|}\rightarrow W$ such that: 1) the local edge labels match across shared faces of adjacent hypercubes; 2) the consecutive edges in each $v$-direction are non-backtracking. The subshift $X_{S}$ is $q$-regular and extendable.

\begin{theorem}
The subshift $X_S$ is a $q$-regular Ramanujan $\mathbb{Z}^{|S_0|}$-subshift. Moreover,
for any locally constant functions $f,g:X_S\rightarrow\mathbb{C}$, there exists a constant $C>0$ such that for all $n\in\mathbb{Z}^{|S_0|}$,
\begin{equation*}
\left|\int_X (f\circ \sigma^n)gd\mu - \int_Xfd\mu\int_Xgd\mu\right|\leq C\|n\|_\infty\left(\tfrac{1}{\sqrt{q}}\right)^{\|n\|_\infty}.
\end{equation*}
\end{theorem}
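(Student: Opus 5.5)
We reduce the statement to a spectral property of Schreier graphs, using the $\mathbb{Z}^\delta$-version of Corollary~\ref{cor:RamSubsh_RamGraphs}. Since $X_S$ is $q$-regular and extendable, it suffices to show that the family of all transition graphs $H^{(\tau)}_n$, $\tau\in S_0$, is a Ramanujan family of directed $q$-regular graphs. This means each graph must satisfy $\lambda\leq\sqrt q$ and be $2$-normal. With $r=2$ in the mixing theorem (via Lemma~\ref{lem:constant_C(d,r)}), this gives exactly the bound $C\|n\|_\infty(1/\sqrt q)^{\|n\|_\infty}$.

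\textbf{Step 1: identify $H^{(\tau)}_n$ with a non-backtracking matrix.} Fix a direction $\tau$ and a box $F$ with side length $1$ in direction $\tau$ and lengths $n_v$ in the other directions $v\neq\tau$. As in the proof of the proposition for a $(d,d)$-datum, an admissible pattern of shape $F$ is determined by one edge label $a\in PA_\tau$ together with a reduced path from the base vertex in $T_{S_0\setminus\{\tau\}}$ of multi-length $(n_v)$, i.e.\ a vertex of the level $L_{S_0\setminus\{\tau\}}(n)$. The remaining labels are forced by unique extendability and the square relations (\ref{eqn:square_relations_Gamma_S}). Two such patterns are adjacent exactly when the corresponding directed edges of the Schreier graph $\mathcal{G}_n^{(\tau)}$ of $\Gamma_\tau$ acting on $L_{S_0\setminus\{\tau\}}(n)$ are consecutive and non-backtracking; the non-backtracking rule in direction $\tau$ enforces this. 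Hence $H^{(\tau)}_n$ has the Bass--Hashimoto matrix of the undirected $(q+1)$-regular graph $\mathcal{G}_n^{(\tau)}$ as its adjacency matrix. The undirected structure comes from $[1+\alpha F]^{-1}=[1-\alpha F]$. By Proposition~\ref{prop:BassHashimoto}, it then suffices to show that each $\mathcal{G}_n^{(\tau)}$ is a connected, non-bipartite Ramanujan graph. In that case $H^{(\tau)}_n$ is unitarily equivalent to a block matrix with blocks of size at most $2$ and has $\lambda=\sqrt q$.

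\textbf{Step 2: connectivity.} Connectivity is Proposition~\ref{prop:graphs are connected}.

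\textbf{Step 3: the Ramanujan bound, the main obstacle.} Here I would follow the Lubotzky--Phillips--Sarnak/Morgenstern strategy. The vertex set $L_{S_0\setminus\{\tau\}}(n)$ is a finite quotient $\Lambda_\tau/\Lambda_\tau(\mathfrak n)\cdots$, namely a coset space of a congruence subgroup of $\Lambda_\tau$. The adjacency operator is the Hecke operator at $\tau$ acting on functions on this finite set. These functions are automorphic forms on the multiplicative group of the definite quaternion algebra $D$, ramified at $\{0,\infty\}$, and invariant under an open compact subgroup. The Jacquet--Langlands correspondence transfers the non-one-dimensional constituents to cuspidal automorphic representations of $GL_2$ over $\mathbb{F}_q(t)$. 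Drinfeld's proof of the Ramanujan--Petersson conjecture then makes the local component at $\tau$ tempered, so the Hecke eigenvalue satisfies $|\lambda|\leq2\sqrt q$. The one-dimensional constituents have the form $\chi\circ\operatorname{Nrd}$ and give eigenvalues $\pm(q+1)$. I would check that the only such eigenvalue occurring is $q+1$: since $\Gamma_\tau$ is generated by $PA_\tau$, whose elements have norm $1-\tau^{-1}t$, the value $-(q+1)$ would require a quadratic character that is $-1$ on this uniformizer and trivial on the relevant congruence stabilizer. One might try to exclude this using transitivity, but that does not suffice. Non-bipartiteness needs an odd cycle. I would attempt to exhibit one via a relation in $\Gamma_S$ containing an odd number of $\tau$-generators, or via the dihedral part of $\Lambda_B$, or otherwise argue through the level-$0$ graph and the covering structure. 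This parity and non-bipartiteness check, together with the careful bookkeeping of the automorphic transfer at the level subgroups, is where I expect the real work to lie.
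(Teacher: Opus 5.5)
Your Steps~1 and~2 coincide with the paper's proof. The Drinfeld/Jacquet--Langlands mechanism of Step~3 is what stands behind the paper's one-line citation of Theorem~6.14 of \cite{RSV2019}, which already says that the congruence Schreier graphs $A_n$ of $\Gamma_\tau$ are Ramanujan, so the real work is not there.

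The genuine gap is non-bipartiteness. You rightly say it is indispensable: a bipartite $A_n$ would give $H_n$ the eigenvalue $-q$, and $X_S$ would not even be mixing. You also rightly say that transitivity of $\Gamma_\tau$ does not rule it out. But you leave it open, and two of your candidate routes cannot work.
\begin{itemize}
\item Every defining relation of $\Gamma_S$ contains an even number ($0$ or $2$) of letters from $PA_\tau$, and $\Gamma_\tau$ is free. So there is no relation with an odd number of $\tau$-generators. Odd closed walks in $A_n$ come from odd-length elements of $\Gamma_\tau$ lying in a level stabilizer, not from relations.
\item The level-$0$ graph (a bouquet of loops) is not a usable base. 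The projection from level~$1$ to level~$0$ is a $(q+1)$-fold covering of even degree, under which odd cycles need not lift to odd cycles. In the $q=3$ example of Figure~\ref{fig:Example1_p=3}, each generator acts on the first level as a $4$-cycle, so every loop lifts to an even cycle.
\end{itemize}

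The paper supplies the missing step in two moves.
\begin{itemize}
\item It finds an odd cycle in the first-level graph, using that the image of $\Gamma_\tau$ acting on $L_v(1)=\mathbb{P}^1(\bF_q)$ is $\mathrm{PSL}_2(\bF_q)$ or $\mathrm{PGL}_2(\bF_q)$. A clean justification: if the graph were bipartite, the colour-preserving elements would form an intransitive index-$2$ subgroup of the image. But every index-$2$ subgroup contains $\mathrm{PSL}_2(\bF_q)$, since that group has no subgroup of index~$2$, and is therefore transitive on $\mathbb{P}^1(\bF_q)$.
\item It observes that all further level projections are $q$-fold coverings with $q$ odd. The preimage of an odd cycle of length $\ell$ is a union of cycles of lengths divisible by $\ell$ with total length $q\ell$, so it contains an odd cycle.
\end{itemize}

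Alternatively, you can close the gap in your own character language. Since $\Gamma_\tau$ is free on $PA_\tau$ modulo inverses, the only possible obstruction is the parity character $g\mapsto(-1)^{v_\tau(\operatorname{Nrd} g)}$. Hence $A_n$ is non-bipartite as soon as its kernel $\Gamma_\tau^{+}$ (the even-length elements) is transitive on the level. Now $\Gamma_\tau^{+}$ is normal in $\Lambda_\tau$ of index $4(q+1)$, still prime to $p$. So the strong-approximation argument of Proposition~\ref{prop:graphs are connected} applies to it verbatim and gives transitivity, hence non-bipartiteness, at every level at once.
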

\begin{proof}
For any $\tau\in S_0$, let $n=(n_v)_{v\in S_0\setminus\{\tau\}}$ be a tuple of nonnegative integers. Let $A_n$ be the (undirected) action graph of the group $\Gamma_\tau$ on the $n$-th level of $T_{S_0\setminus\{\tau\}}$. By Proposition~\ref{prop:graphs are connected}, the graph $A_n$ is connected. Therefore, $A_n$ is the Schreier coset graph of $\Gamma_\tau$ with respect to a congruence subgroup, which implies it is Ramanujan by Theorem 6.14 in \cite{RSV2019}.

Let us show that the graphs $A_n$ are non-bipartite. The action of $\Gamma_\tau$ on the first level of $T_v$ for $v\neq\tau$ factors through $\mathrm{PSL}_2(\bF_q)$ or $\mathrm{PGL}_2(\bF_q)$, which contain an element of order three. Hence, the associated action graph $A_v$ contains an odd cycle. The graph $A_n$ can be obtained by iterated series of $q$-lifts of the product of graphs $A_1(v)$. Since the $q$-lifts of odd cycles have odd number of edges, $A_n$ contains an odd cycle and is therefore not bipartite.

Put $n_\tau=1$ and let $F=\prod_{v\in S_0}[1,n_v]\subset\mathbb{Z}^{|S_0|}$ be the $n$-rectangular shape. Consider the directed graph $H_{n}$ whose vertices are admissible patterns of shape $F$, and a directed edge goes from a pattern $p_1$ to a pattern $p_2$, if the combined pattern, formed by shifting $p_2$ by one unit in the $\tau$-direction and adjoining it to $p_1$, is an admissible pattern in $X_S$. The adjacency matrix of $H_n$ is exactly the non-backtracking matrix of the graph $A_n$. By Proposition~\ref{prop:BassHashimoto}, the graph $H_n$ is non-bipartite Ramanujan as directed graph, and we can apply a higher-dimensional analog of Corollary~\ref{cor:RamSubsh_RamGraphs}.
\end{proof}

\begin{corollary}
For every odd prime power $q\geq 3$ and dimension $\delta<q$, there exists a $q$-regular Ramanujan $\mathbb{Z}^\delta$-subshift.
\end{corollary}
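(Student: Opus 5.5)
This corollary follows from the preceding theorem once we pick a suitable set of places $S_0$. Fix an odd prime power $q\geq 3$ and a dimension $\delta<q$. The multiplicative group $\bF_q^{\times}$ has exactly $q-1$ elements, so $\delta\leq q-1$ guarantees a subset $S_0\subseteq\bF_q^{\times}$ with $|S_0|=\delta$. Choose one, and set $S=S_0\cup B$ with $B=\{0,\infty\}$.

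Each $\tau\in S_0$ gives a place $t=\tau$ of $K=\bF_q(t)$. This place has residue field $\bF_q$, so $T_\tau$ is $(q+1)$-regular. It is distinct from $0$ and $\infty$, so $S\supseteq B$ as required.

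Now run the construction of this section.
\begin{enumerate}
\item Form the lattice $\Gamma_S=\langle PA_\tau:\tau\in S_0\rangle$. By the results quoted from \cite{RSV2019}, $\Gamma_S$ acts simply transitively on the vertices of $T_{S_0}$, which is a product of $\delta$ copies of $T_{q+1}$. It has the stated presentation with square relations (\ref{eqn:square_relations_Gamma_S}).
\item Build the subshift $X_S$ over the alphabet $W=\prod_{v\in S_0}PA_v$, using hypercube colourings and the non-backtracking constraint. As noted before the theorem, $X_S$ is a $q$-regular, extendable $\mathbb{Z}^{|S_0|}=\mathbb{Z}^\delta$-subshift.
\item Apply the preceding theorem. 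It states that $X_S$ is Ramanujan, with the mixing estimate holding for $r=1$.
\end{enumerate}

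The only nontrivial point is verifying that the hypotheses of the theorem and of the \cite{RSV2019} results hold for every choice of $S_0\subseteq\bF_q^{\times}$ of size $\delta$. In particular, one must check that nothing requires $S_0$ to be all of $\bF_q^{\times}$ or to exclude particular values. The quoted properties are stated for arbitrary $S_0\subseteq\bF_q^{\times}$, and each set $A_\tau$ is nonempty. Indeed, the norm $N:\bF_q[Z]^{*}\to\bF_q^{*}$ is surjective with fibres of size $q+1$, which gives $|PA_\tau|=q+1$ for every $\tau$.

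So the argument reduces to this counting check, $\delta\leq q-1=|\bF_q^{\times}|$, followed by a direct appeal to the theorem.
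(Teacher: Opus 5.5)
Your proposal is correct and is exactly the paper's (implicit) argument: choose any $S_0\subseteq\bF_q^{\times}$ with $|S_0|=\delta$, which exists because $\delta\leq q-1=|\bF_q^{\times}|$, and apply the preceding theorem to conclude that $X_S$ is a $q$-regular Ramanujan $\mathbb{Z}^{\delta}$-subshift. Your additional check that each $A_\tau$ is nonempty, via surjectivity of the norm, is harmless and consistent with the cited results.
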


Let us consider the two-dimensional case and explicitly construct the associated VH-datum in terms of the field $\bF_q[Z]$. Let $\tau,\sigma\in\bF_q^{\times}$ be two distinct points. We define the datum $D_{\tau,\sigma}=(V,H,R)$, where:
\begin{enumerate}
  \item The sets $V$ and $H$ are defined by the norm:
\[
V=\{ \alpha\in \bF_q[Z] : N(\alpha)=\tau^{-1} \} \ \mbox{ and } \ H=\{ \beta\in \bF_q[Z] : N(\beta)=\sigma^{-1} \}.
\]
  \item The fixed-point-free involutions on $V$ and $H$ are induced by negation in $\bF_q[Z]$.
  \item The relation $R$ consists of tuples
\[
(\alpha,\beta,\zeta_\alpha(\beta)\beta, \zeta_\beta(\alpha)\alpha)\in R, \ \alpha\in V, \beta\in H,
\]
where $\zeta_{\alpha}(\beta) = \frac{1 + \alpha/\beta}{1 + \overline{\alpha}/\overline{\beta}}$.
\end{enumerate}
The properties the groups $\Gamma_S$ ensure that $D_{\tau,\sigma}$ satisfies the conditions of a $(\delta,\delta)$-datum for $\delta=(q+1)/2$. The group $\Gamma_{D}$ and its action on the product of two trees associated with the datum $D_{\tau,\sigma}$ in Section~\ref{sect:VHdatum} coincides with the group $\Gamma_{S}$ and its action on $T_{S}=T_{\tau}\times T_{\sigma}$ for $S=\{0,\infty,\tau,\sigma\}$.

\begin{proposition}
Let $M_{\tau,\sigma}$ be the Mealy automaton associate with the VH-datum $D_{\tau,\sigma}$. Then the dual automaton $\partial M_{\tau,\sigma}$ is isomorphic to $M_{\sigma,\tau}$. In particular, if $\sigma=-\tau$, the automata $M_{\tau,\sigma}=M_{-\tau,-\sigma}$ and $\partial M_{\tau,\sigma}$ are isomorphic.
\end{proposition}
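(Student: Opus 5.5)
The plan is to compare the transition and output functions of the two automata directly; the statement should then follow by unwinding definitions, with no appeal to the geometry of $\Gamma_S$.

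\textbf{The dual of $M_{\tau,\sigma}$.} Write $V_\tau=\{\alpha\in\bF_q[Z]: N(\alpha)=\tau^{-1}\}$ and $V_\sigma=\{\beta\in\bF_q[Z] : N(\beta)=\sigma^{-1}\}$. By the definition of $M_D$ and of the relation $R$ in $D_{\tau,\sigma}$, the automaton $M_{\tau,\sigma}$ has state set $V_\tau$ and alphabet $V_\sigma$. For $\alpha\in V_\tau$ and $\beta\in V_\sigma$ its functions are
\[
\delta(\alpha,\beta)=\zeta_\beta(\alpha)\alpha,\qquad \lambda(\alpha,\beta)=\zeta_\alpha(\beta)\beta .
\]
By the definition of the dual, $\partial M_{\tau,\sigma}$ has state set $V_\sigma$, alphabet $V_\tau$, and
\[
\delta^*(\beta,\alpha)=\zeta_\alpha(\beta)\beta,\qquad \lambda^*(\beta,\alpha)=\zeta_\beta(\alpha)\alpha .
\]
Now compute $M_{\sigma,\tau}$ from the datum $D_{\sigma,\tau}$. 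Its vertical set is $V_\sigma$, its horizontal set is $V_\tau$, and its tuples are $(\beta,\alpha,\zeta_\beta(\alpha)\alpha,\zeta_\alpha(\beta)\beta)$. Hence its transition and output functions are $\delta'(\beta,\alpha)=\zeta_\alpha(\beta)\beta$ and $\lambda'(\beta,\alpha)=\zeta_\beta(\alpha)\alpha$. These agree exactly with $\delta^*$ and $\lambda^*$. So the identity maps on states and on letters give the isomorphism $\partial M_{\tau,\sigma}\cong M_{\sigma,\tau}$, and they also respect the negation involutions.

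\textbf{The case $\sigma=-\tau$.} The identity $M_{\tau,\sigma}=M_{-\tau,-\sigma}$ is a tautology here, since $-\tau=\sigma$ and $-\sigma=\tau$. By the first part it then suffices to show $M_{\tau,-\tau}\cong M_{-\tau,\tau}$.

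Since the norm $N:\bF_q[Z]^*\to\bF_q^*$ is surjective, fix $\gamma\in\bF_q[Z]$ with $N(\gamma)=-1$. Multiplication $\phi(x)=\gamma x$ is then a bijection $V_\tau\to V_{-\tau}$ and a bijection $V_{-\tau}\to V_\tau$, and it commutes with negation. Because $\bF_q[Z]$ is a field and conjugation is multiplicative, we have $\gamma\alpha/\gamma\beta=\alpha/\beta$ and $\overline{\gamma\alpha}/\overline{\gamma\beta}=\overline{\alpha}/\overline{\beta}$. Therefore $\zeta_{\gamma\alpha}(\gamma\beta)=\zeta_\alpha(\beta)$. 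It follows that
\[
\delta_{-\tau,\tau}(\gamma\alpha,\gamma\beta)=\zeta_{\gamma\beta}(\gamma\alpha)\,\gamma\alpha=\gamma\,\delta_{\tau,-\tau}(\alpha,\beta),
\]
and the same computation holds for $\lambda$. Thus $\phi$, applied to both states and letters, is an isomorphism $M_{\tau,-\tau}\to M_{-\tau,\tau}\cong\partial M_{\tau,-\tau}$.

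\textbf{Main obstacle.} The only real care needed is bookkeeping: the order of the entries in the tuples of $R$ and which $\zeta$ goes with which coordinate. A mismatch there would swap $\delta$ and $\lambda$. I would check this against the square relation (\ref{eqn:square_relations_Gamma_S}) and the convention $\delta(a,b)=d$, $\lambda(a,b)=c$ used in the definition of $M_D$.
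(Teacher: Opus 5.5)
Your proof is correct, and it is more elementary than the paper's. For the first claim the paper works through the group $\Gamma_S$. It inverts the square relations (\ref{eqn:square_relations_Gamma_S}) using $[1+\xi F]^{-1}=[1-\xi F]$ and reads off the isomorphism $\xi\mapsto-\xi$. You instead unwind the explicit formula for $R$ and observe that $\partial M_{\tau,\sigma}$ and $M_{\sigma,\tau}$ literally coincide: same states, same letters, same $\delta$ and $\lambda$. So the identity is an isomorphism. The two routes agree because $\zeta_{\gamma\alpha}(\gamma\beta)=\zeta_\alpha(\beta)$, so multiplication by any $\gamma$ of norm $1$ (in particular $\gamma=-1$) is an automorphism of $M_{\sigma,\tau}$. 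The paper's version ties the statement to inverting square relations in $\Gamma_S$; yours needs nothing beyond the definitions.

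For the second claim, your scaling by $\gamma$ with $N(\gamma)=-1$ is the real step. It proves $M_{\tau,\sigma}\cong M_{-\tau,-\sigma}$ for all $\tau,\sigma$. Combined with the first part, it gives $M_{\tau,-\tau}\cong M_{-\tau,\tau}=\partial M_{\tau,-\tau}$. The paper does not supply this step. Its display places the image of $\xi\mapsto-\xi$ in $M_{-\sigma,-\tau}$, but $N(-\xi)=N(\xi)$, so negation actually lands in $M_{\sigma,\tau}$. Negation alone therefore cannot yield the special case, and your argument is what closes it.

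One slip in your write-up: when $\sigma=-\tau$, the equality $M_{\tau,\sigma}=M_{-\tau,-\sigma}$ reads $M_{\tau,\sigma}=M_{\sigma,\tau}$, which is not a tautology. It is exactly what your $\gamma$-scaling establishes (up to isomorphism), and you should present it that way rather than as trivial.
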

\begin{proof}
By taking the inverse to relations (\ref{eqn:square_relations_Gamma_S}), we see that the map $\varphi(\xi)=-\xi$ for $\xi\in V\cup H$ defines the isomorphism between $\partial M_{\tau,\sigma}$ and $M_{\sigma,\tau}$:
\[
\alpha\xrightarrow{\beta | \zeta_\alpha(\beta)\beta} \zeta_\beta(\alpha)\alpha \ \mbox{ in $M_{\tau,\sigma}$} \quad \Leftrightarrow \quad -\beta\xrightarrow{-\alpha | -\zeta_\beta(\alpha)\alpha} -\zeta_\alpha(\beta)\beta  \ \mbox{ in $M_{-\sigma,-\tau}$}.
\]
\end{proof}

\begin{corollary}
For every odd prime power $q\geq 3$, there exists a Mealy automaton $M$ with $q+1$ states over a symmetric alphabet of size $q+1$ such that $M\cong \partial M$ and the action graphs $G_n(M)\cong G_n(\partial M)$, when restricted to reduced words of length $n$, are non-bipartite $(q+1)$-regular Ramanujan graphs for all $n\geq 1$.
\end{corollary}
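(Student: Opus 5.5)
The plan is to take $M=M_{\tau,\sigma}$ with $\sigma=-\tau$ for a single $\tau\in\bF_q^{\times}$, i.e.\ the automaton attached to the VH-datum $D_{\tau,-\tau}$. First I check that such a choice is possible. Since $\tau\neq-\tau$ for odd $q$, the set $S_0=\{\tau,-\tau\}$ has two elements, so the construction of $\Gamma_S$ with $S=\{0,\infty,\tau,-\tau\}$ applies. This only needs $\delta=2<q$, which holds because $q\geq 3$.

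By the norm description, $V=\{\alpha : N(\alpha)=\tau^{-1}\}$ and $H=\{\beta : N(\beta)=-\tau^{-1}\}$. Each has exactly $q+1$ elements, because the norm $N:\bF_q[Z]^{*}\to\bF_q^{*}$ is surjective with fibres of size $q+1$. Negation gives a fixed-point-free involution on each set, so the alphabet $\Sigma=H$ is symmetric of size $q+1$ and there are $q+1$ states. The preceding proposition, applied with $\sigma=-\tau$, gives $M\cong\partial M$. The isomorphism is $\xi\mapsto-\xi$ together with the identification $M_{-\sigma,-\tau}=M_{\tau,\sigma}$. Any isomorphism of Mealy automata induces isomorphisms of the action graphs, so $G_n(M)\cong G_n(\partial M)$ for all $n$. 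These isomorphisms preserve reduced words, since they commute with the involution (negation commutes with negation).

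Next I identify the graphs. By the proposition relating $M_D$ to $\Gamma_D$, the restriction of $G_n(M)$ to reduced words of length $n$ is $\overline{A}_n(\Gamma_D)$. After gluing each edge with its inverse edge, as explained at the end of Section~5, this is the undirected $(q+1)$-regular Schreier graph $A_n(\Gamma_D)$ of $L=\langle V\rangle=\Gamma_\tau$ acting on the $n$-th level of $T_{-\tau}$. Here I use that $\Gamma_{D_{\tau,\sigma}}$ and its action coincide with $\Gamma_S$ acting on $T_\tau\times T_\sigma$.

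It remains to show these graphs are connected, Ramanujan and non-bipartite, and I would reuse the argument in the proof of the preceding theorem. Proposition~\ref{prop:graphs are connected} gives that $\Gamma_\tau$ acts transitively on $L_{-\tau}(n)$, so $A_n$ is connected. Theorem~6.14 of \cite{RSV2019}, applied to this Schreier graph for a congruence subgroup, gives the Ramanujan property. For non-bipartiteness, the action on the first level factors through $\mathrm{PSL}_2(\bF_q)$ or $\mathrm{PGL}_2(\bF_q)$, which contain an element of order three, and this produces an odd cycle in $A_1$. The graphs $A_n$ are obtained from $A_1$ by iterated coverings: by the covering propositions the projections $\pi_r$ are covering maps. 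A closed odd walk in $A_1$ lifts to a walk in $A_n$ that returns to its start after some power; if the power is odd we get an odd closed walk directly, and otherwise I take a short odd cycle and argue via the order-$3$ element acting at each level. Either way $A_n$ is non-bipartite.

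I expect the main obstacle to be this lifting step: showing that an odd cycle survives in every $A_n$. A cleaner route I would try first is to find an element of $\Gamma_\tau$ of odd order modulo the level-$n$ congruence kernel that fixes some vertex of $L_{-\tau}(n)$ while the corresponding word has odd length. A simpler alternative is to use that bipartiteness of a connected Schreier graph is equivalent to the stabilizer lying in the index-$2$ subgroup of even-length words, and then rule this out via strong approximation.
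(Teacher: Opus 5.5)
\paragraph{Overall.} Your route is the one the paper implicitly uses. You take $\sigma=-\tau$, get $M\cong\partial M$ from the preceding proposition, and identify $G_n(M)$ on reduced words with the Schreier graph $A_n$ of $\Gamma_\tau$ on the $n$-th level of $T_{-\tau}$. You then import connectedness, the Ramanujan bound and non-bipartiteness from the proof of the theorem. There is one gap and one wrong explicit map.

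\paragraph{The gap: non-bipartiteness of every $A_n$.} This is the step you flag yourself. Your lifting sketch stalls when the lifted walk closes up only after an even number of rounds. The fallback ``argue via the order-three element at each level'' is not an argument.

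The missing observation is that $A_n\to A_1$ is a covering whose fibres have odd size $q^{n-1}$. This map is the iterated $\pi_r$, i.e.\ the parent map in $T_{-\tau}$, which commutes with $\Gamma_\tau$. Lifting a closed walk of odd length $\ell$ based at $y$ induces a permutation of the fibre over $y$. A permutation of a set of odd size has a cycle of some odd length $m$. Starting the lift at a point of that cycle, $m$ consecutive lifts close up into a closed walk of odd length $m\ell$ in $A_n$. This is the paper's remark that $q$-lifts of an odd cycle contain an odd cycle. You already had the right tool and only needed to choose the starting lift.

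\paragraph{Level $1$.} Here you repeat the paper's appeal to an element of order three, which on its own does not suffice. It only gives the closed walk $w^3$, which is odd only if the word $w$ is odd. In the $q=3$ example, $a$ and $b$ act on $H$ as $4$-cycles, so word parity equals the sign in $S_4$, and every $3$-cycle is represented only by even words.

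Your ``simpler alternative'' repairs this. Since $A_1$ is connected, it is bipartite iff a vertex stabilizer lies in the kernel of the parity character $\chi$ of the free group $\Gamma_\tau$. The image of $\Gamma_\tau$ in $\mathrm{PGL}_2(\bF_q)$ contains $\mathrm{PSL}_2(\bF_q)$, which has no subgroup of index $2$ for odd $q\ge 3$. So there are two cases:
\begin{itemize}
\item $\chi$ does not factor through the image; then some odd word acts trivially on the level, giving an odd closed walk.
\item The image is $\mathrm{PGL}_2(\bF_q)$ and $\chi$ is the square class of the determinant; then every point stabilizer on $\mathbb{P}^1(\bF_q)$ contains a conjugate of $\mathrm{diag}(u,1)$ with $u$ a non-square, which is an odd element fixing a vertex.
\end{itemize}

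\paragraph{The explicit isomorphism.} The map you describe, $\xi\mapsto-\xi$ together with $M_{-\sigma,-\tau}=M_{\tau,\sigma}$, cannot be an isomorphism $M\to\partial M$. Since $N(-\xi)=N(\xi)$, negation maps $V=\{N=\tau^{-1}\}$ to itself, whereas the states of $\partial M$ form $H=\{N=-\tau^{-1}\}$. The display in the paper's proof of the proposition has the same mislabelling; what negation actually gives is $\partial M_{\tau,\sigma}\cong M_{\sigma,\tau}$.

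A correct isomorphism is $\xi\mapsto\lambda\xi$ for any $\lambda\in\bF_q[Z]$ with $N(\lambda)=-1$, which exists because $N$ is surjective. This map interchanges $V$ and $H$. Since $\zeta_{\lambda\alpha}(\lambda\beta)=\zeta_\alpha(\beta)$, it sends the transition $\alpha\xrightarrow{\beta|\zeta_\alpha(\beta)\beta}\zeta_\beta(\alpha)\alpha$ of $M$ to $\lambda\alpha\xrightarrow{\lambda\beta|\zeta_\alpha(\beta)\lambda\beta}\zeta_\beta(\alpha)\lambda\alpha$. That is a transition of $\partial M$, whose transitions are $\beta\xrightarrow{\alpha|\zeta_\beta(\alpha)\alpha}\zeta_\alpha(\beta)\beta$. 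The map commutes with negation, so your remark that reduced words are preserved still holds.
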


\begin{example}
Let $q=3$ and $\bF_3\subset\bF_3[Z]$ be the field extension defined by $Z^2+1=0$. Set $\tau=1$, $\sigma=2$, and $S=\{0,1,2,\infty\}$. The datum $D_{1,2}=(V,H,R)$ consists of
\begin{align*}
V=\{ \pm 1, \pm Z\},  \qquad  H=\{ 1\pm Z, 2\pm Z \},
\end{align*}
$$
R = \left\{
\begin{array}{ll}
(1, 1+Z, 2+2Z, 2), & (1, 1+2Z, 2+Z, 2), \\
(1, 2+Z, 1+2Z, 2), & (1, 2+2Z, 1+Z, 2), \\
(Z, 1+Z, 2+2Z, 2Z), & (Z, 1+2Z, 1+Z, 2Z), \\
(Z, 2+Z, 1+2Z, 2Z), & (Z, 2+2Z, 2+Z, 2Z), \\
(2, 1+Z, 2+2Z, 1), & (2, 1+2Z, 2+Z, 1), \\
(2, 2+Z, 1+2Z, 1), & (2, 2+2Z, 1+Z, 1), \\
(2Z, 1+Z, 2+2Z, Z), & (2Z, 1+2Z, 1+Z, Z), \\
(2Z, 2+Z, 1+2Z, Z), & (2Z, 2+2Z, 2+Z, Z)
\end{array}
\right\}.
$$
The associated group generators are:
\[
A_{1}=\{ 1\pm F, 1\pm ZF \}, \qquad  A_{2}=\{ 1+F\pm ZF, 1+2F\pm ZF \}.
\]
Setting $a=[1+F], b=[1+ZF]$ and $x=[1+F+ZF], y=[1+2F+ZF]$, the $16$ defining relations for the group $\Gamma_S$ reduce to four relations:
\[
\Gamma_S=\langle a,b,x,y\,|\, ax=x^{-1}b^{-1}, ay=xa^{-1}, by=y^{-1}a, bx^{-1}=yb^{-1}\rangle.
\]
The Mealy automaton $M_{\tau,\sigma}$ and the associated Wang tiles are shown in Figure~\ref{fig:Example1_p=3}.
\end{example}

\bibliographystyle{plain}
\bibliography{RamanSubshifts}

\end{document}